\newlength\ubwidth
\newtheorem{theorem}{Theorem}
\newtheorem{lemma}[theorem]{Lemma}
\newtheorem{ex}[theorem]{Example}
\newtheorem{cor}[theorem]{Corollary}
\newtheorem{prop}[theorem]{Proposition}
\newtheorem{defn}[theorem]{Definition}
\begin{document}

\title[Constructing Generating Functions]{ CONSTRUCTION OF EVIDENTLY POSITIVE SERIES AND AN ALTERNATIVE CONSTRUCTION FOR A FAMILY OF PARTITION GENERATING FUNCTIONS DUE TO KANADE AND RUSSELL}

\author{Kağan Kurşungöz}
\address{Kağan Kurşungöz, Faculty of Engineering and Natural Sciences, Sabanc{\i} University, Tuzla, Istanbul 34956, Turkey}
\email{kursungoz@sabanciuniv.edu}

\author{Hal\.{ı}me Ömrüuzun Seyrek}
\address{Hal\.{ı}me Ömrüuzun Seyrek, Faculty of Engineering and Natural Sciences, Sabanc{\i} University, Tuzla, Istanbul 34956, Turkey}
\email{halimeomruuzun@sabanciuniv.edu}

\keywords{integer partition, partition generating function, evidently positive generating functions, Rogers-Ramanujan type partition identities.}
	
\date{June 2021}
	
\begin{abstract}
\noindent We give an alternative construction for a family of partition generating functions due to Kanade and Russell. In our alternative construction, we use ordinary partitions instead of jagged partitions. We also present new generating functions which are evidently positive series for  partitions due to Kanade and Russell. To obtain those generating functions, we first construct an evidently positive series for a key infinite product. In that construction, a series of combinatorial moves is used to decompose an arbitrary partition into a base partition together with some auxiliary partitions that bijectively record the moves. 
	 
\end{abstract}
	
\maketitle
	
	
\section{Introduction and Statement of Results}
\label{secIntro}

A \textit{partition} of a positive integer $n$ is a finite non-decreasing sequence of positive integers $\lambda=\lambda_1$, \ldots, $\lambda_k$ such that  $\lambda_1+ \ldots +\lambda_k=n$. The $\lambda_i$'s are called the \textit{parts} of the partition and the \textit{weight} $|\lambda|$ of the partition $\lambda$ is defined to be $|\lambda|=n$.  We consider the empty sequence as the only partition of zero. For example, the five partitions of $n=4$ are
\begin{center}
	$4$, \quad $1+3$, \quad  $2+2$, \quad $1+1+2$, \quad $1+1+1+1$.
\end{center}

Depending on context, we sometimes allow zeros to appear in the partition. Clearly, the zeros have no contribution to the weight of the partition, but the length changes as we add or take out zeros.

Many partition identities have the following form: "the number of partitions of $n$ satisfying condition $A$ = the number of partitions of $n$ satisfying condition $B$" \cite{the-theory-of-partitions}. We recall the famous partition identity due to Euler \cite{euler}:

\begin{theorem}
The number of partitions of $n$ into distinct parts is equal to the number of partitions of $n$ into odd parts. 
\end{theorem}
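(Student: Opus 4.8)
The plan is to prove the identity by computing both generating functions and showing they coincide as formal power series in $q$. On one side, a partition into distinct parts uses each positive integer $k$ either zero times or once, so the generating function enumerating partitions into distinct parts by weight is
\begin{equation*}
\prod_{k\geq 1}\bigl(1+q^k\bigr).
\end{equation*}
On the other side, a partition into odd parts may use each odd number $2j-1$ any number of times, so the corresponding generating function is
\begin{equation*}
\prod_{j\geq 1}\frac{1}{1-q^{2j-1}}.
\end{equation*}
The theorem is then equivalent to the identity $\prod_{k\geq 1}(1+q^k)=\prod_{j\geq 1}(1-q^{2j-1})^{-1}$, since equating coefficients of $q^n$ on both sides yields exactly the asserted equality of counts.

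The key algebraic step I would carry out is the factorization $1+q^k=\dfrac{1-q^{2k}}{1-q^k}$, valid for each $k\geq 1$. Substituting this into the distinct-parts product gives
\begin{equation*}
\prod_{k\geq 1}\bigl(1+q^k\bigr)=\prod_{k\geq 1}\frac{1-q^{2k}}{1-q^k}=\frac{\prod_{k\geq 1}\bigl(1-q^{2k}\bigr)}{\prod_{k\geq 1}\bigl(1-q^k\bigr)}.
\end{equation*}
The numerator is precisely the product of the factors $1-q^m$ over even $m$, and these cancel the even-indexed factors in the denominator, leaving only the odd-indexed factors $1-q^{2j-1}$ in the denominator. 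This collapses to $\prod_{j\geq 1}(1-q^{2j-1})^{-1}$, which is exactly the odd-parts generating function, completing the proof.

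The main obstacle is making the cancellation rigorous, since one cannot naively rearrange or cancel factors in infinite products without justification. To handle this I would work with finite truncations: prove the identity for the partial products $\prod_{k=1}^{N}(1+q^k)$ and take $N\to\infty$ in the sense of formal power series, noting that the coefficient of any fixed $q^n$ stabilizes once $N\geq n$. Alternatively, and perhaps more in the spirit of the combinatorial constructions emphasized later in this paper, one may give a direct bijection (Glaisher's map): expand the multiplicity of each odd part $2j-1$ in binary, $m_j=\sum_i 2^{a_i}$, and send it to the distinct parts $2^{a_i}(2j-1)$; distinctness follows from uniqueness of binary expansions together with the unique factorization of each integer as a power of two times an odd number, and the inverse map recovers the odd-part multiplicities. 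Verifying that this correspondence is a weight-preserving bijection would be the crux of that alternative argument.
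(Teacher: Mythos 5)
Your proof is correct, but there is nothing in the paper to compare it against: the paper does not prove this statement at all. It is recalled in the introduction as Euler's classical theorem, with a citation to the literature, purely as background motivation for the notion of a partition identity. Both of the arguments you give are complete. The generating-function route---writing $\prod_{k\geq 1}(1+q^k)$ for distinct parts and $\prod_{j\geq 1}(1-q^{2j-1})^{-1}$ for odd parts, then applying $1+q^k=\frac{1-q^{2k}}{1-q^k}$ and cancelling the even-indexed factors---is Euler's own proof, and your truncation argument disposes of the rigor issue correctly: in the truncated product $\prod_{k=1}^{N}\frac{1-q^{2k}}{1-q^k}$ the uncancelled numerator factors involve only powers $q^m$ with $m>N$, so the coefficient of $q^n$ stabilizes once $N\geq n$. (This style of manipulation is also consonant with how the paper itself handles infinite products, e.g.\ in the proofs of Theorems \ref{2+2 not allowed} and \ref{1 not allowed}, where products are factored and recombined freely as formal power series.) The Glaisher bijection you sketch---binary expansion of multiplicities, $m_j=\sum_i 2^{a_i}$ mapped to the distinct parts $2^{a_i}(2j-1)$, with injectivity coming from the unique factorization of an integer as a power of two times an odd number---is likewise a complete and correct alternative, and is closer in spirit to the bijective move-based decompositions the paper develops in Section \ref{Evidently Positive Series}. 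Either argument on its own suffices.
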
 
The \textit{generating function} $f(q)$ for the sequence $a_0, a_1, a_2, a_3, \ldots$ is the power series $f(q)=\sum_{n\geq 0}a_nq^n$ \cite{the-theory-of-partitions}. Any form of the series 
\begin{align*}
	F(q) = \sum_{n \geq 0} p(n \vert \textrm{ cond. } A) q^n
\end{align*}
is called a \emph{partition generating function}.  

Here and throughout, for $n \geq 0$,

\begin{center}
	$(a; q)_n=(1-a)(1-aq)(1-aq^2)\ldots (1-aq^n)$\\
	\vspace{3mm}
	$(a; q)_\infty=\lim_{n \to \infty}(a; q)_n=(1-a)(1-aq)(1-aq^2)(1-aq^3)\ldots.$
\end{center}
and we assume $|q| < 1$ for convergence. 
We shall frequently use the following $q$-series identities due to Euler:
\begin{equation}\label{Euler 1}
	\frac{1}{(x; q)_\infty}= \sum_{n \geq 0} \frac{x^n}{(q; q)_{n}},
\end{equation}

\begin{equation}\label{Euler 2}
	(-x; q)_\infty= \sum_{n \geq 0} \frac{x^nq^{n(n-1)/2}}{(q; q)_{n}}.
\end{equation}

In 2019, Kanade and Russell provided analytic sum-sides to some partition identities of Rogers-Ramanujan type \cite{sk-mcr-staircases}. In Section \ref{Alternative Construction}, we give an alternative construction of those analytic sum-sides. In our alternative construction, we use ordinary partitions instead of jagged partitions.

\begin{theorem} \cite{sk-mcr-staircases}
	 \label{2+2 not allowed}
Consider the partitions satisfying the following conditions:\begin{enumerate} [($a$)]
\item  No consecutive parts allowed. 
\item  Odd parts do not repeat.
\item  For a contiguous sub-partition $\lambda_i+\lambda_{i+1}+\lambda_{i+2}$, we have $| \lambda_i-\lambda_{i+2}| \geq 4 $ if $\lambda_{i+1}$ is even and appears more than once.
\item $2+2$ is not allowed as a sub-partition.
\end{enumerate}
	
\noindent For $n, m \in \mathbb{N}$, let $kr_1(n, m)$ denote the number of partitions of n into m parts such that the partitions satisfy the conditions (a), (b), (c) and (d). Then,
\begin{equation} \label{first generating function}
\sum_{m,n \geq 0}kr_1(n,m)q^nt^m= \sum_{i, j, k \geq 0}(-1)^k \frac{t^{i+2j+3k}q^{(i+2j+3k)(i+2j+3k-1)+i+6j+3k^2+6k}}{(q;q)_i(q^4;q^4)_j(q^6;q^6)_k}. 
\end{equation}
\end{theorem}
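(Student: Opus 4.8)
The plan is to establish the identity in~\eqref{first generating function} by constructing an explicit bijection between the partitions counted by $kr_1(n,m)$ and a set of combinatorial objects whose generating function is manifestly the right-hand side. The structure of the right-hand side is quite suggestive: the three summation indices $i,j,k$ together with the denominators $(q;q)_i$, $(q^4;q^4)_j$, $(q^6;q^6)_k$ indicate that each partition should decompose into a \emph{base partition} carrying the quadratic exponent $t^{i+2j+3k}q^{(i+2j+3k)(i+2j+3k-1)+\,i+6j+3k^2+6k}$, together with three auxiliary partitions that are freely adjustable. First I would interpret the factor $1/(q;q)_i$ via Euler's identity~\eqref{Euler 1} as recording an ordinary partition into at most $i$ parts (equivalently $i$ parts each $\geq 0$), and similarly $1/(q^4;q^4)_j$ and $1/(q^6;q^6)_k$ as partitions into parts that are multiples of $4$ and $6$ respectively; the sign $(-1)^k$ signals an inclusion–exclusion or a sign-reversing involution tied to the multiplicity-$k$ objects.

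The core of the argument, following the strategy announced in the abstract, is to start from an arbitrary partition satisfying conditions (a)–(d) and apply a sequence of \emph{combinatorial moves} that strip it down to a minimal ``base'' configuration while recording each move reversibly in one of the auxiliary partitions. Concretely, I would first pin down, for fixed total number of parts $m=i+2j+3k$ and a fixed allocation of those parts into the three categories, the unique smallest partition (the base) compatible with the gap conditions; this base should contribute exactly the exponent $(i+2j+3k)(i+2j+3k-1)+i+6j+3k^2+6k$ in $q$ and $t^{i+2j+3k}$ in $t$. The quadratic term $(i+2j+3k)(i+2j+3k-1)$ is the telltale sign of the ``no consecutive parts'' staircase-type minimum, while the linear corrections $i+6j+3k$ and the extra $3k^2+6k$ encode the cost of the even-repeating blocks (governed by condition (c)) and the excluded $2+2$ (condition (d)). I would verify these exponents by directly writing down the minimal partition in each of the three part-types and summing.

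Once the base is isolated, the remaining task is to show that any admissible partition is obtained from its base by independently increasing the auxiliary data, and that this correspondence is a bijection preserving weight and number of parts. The moves should be designed so that: growing the $1/(q;q)_i$ partition shifts the ``odd, non-repeating'' parts upward; growing the $(q^4;q^4)_j$ partition adjusts the even parts subject to the $|\lambda_i-\lambda_{i+2}|\geq 4$ constraint; and the $(q^6;q^6)_k$ partition together with the sign $(-1)^k$ accounts, via a sign-reversing involution, for the forbidden $2+2$ sub-partition and the interaction between repeated even parts and the difference conditions. Summing $q^{|\lambda|}t^{(\text{number of parts})}$ over all base configurations and all choices of auxiliary partitions then factors as the triple sum on the right-hand side.

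The main obstacle, I expect, will be the careful bookkeeping of conditions (c) and (d) and showing that the moves are genuinely invertible and non-overlapping. Conditions (a) and (b) are local and easy to encode, but condition (c) couples three consecutive parts through the parity and multiplicity of the middle part, and condition (d) is a single forbidden pattern whose exclusion is most naturally handled by the alternating sign $(-1)^k$ rather than by a direct bijection. Reconciling these—that is, proving that the signed count of configurations built from the $k$-indexed moves exactly cancels the overcounted partitions that would otherwise violate (c) or (d)—is where a sign-reversing involution must be constructed and its fixed points identified with the genuinely admissible partitions. Verifying that this involution is well-defined on all non-base configurations, and that it leaves invariant both the weight and the part count, will be the technical heart of the proof; the rest is the routine summation via~\eqref{Euler 1} and~\eqref{Euler 2}.
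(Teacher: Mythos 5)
Your proposal correctly senses the general shape of the right-hand side (a staircase-type minimum plus three freely adjustable partitions read off from the denominators via Euler's identities), but it defers the one step on which everything hinges and that step is precisely where the plan would fail. You propose to explain the factor $(-1)^k$ by a sign-reversing involution whose fixed points are the admissible partitions, and you yourself flag the construction of this involution as "the technical heart" without supplying it. That is a genuine gap, not bookkeeping: since the right-hand side has negative terms, no direct weight-preserving bijection onto a set of objects can produce it, so your entire argument reduces to that unconstructed involution. Moreover, nothing in your plan gives a handle on conditions (c) and (d); you have no decomposition under which these two conditions become local and checkable, so there is no candidate description of the "non-base configurations" on which the involution should act.

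The paper's proof avoids the involution altogether by a different mechanism that your plan misses. Given $\lambda$ counted by $kr_1(n,m)$, each pair of repeated even parts $(2k)+(2k)$ is rewritten as consecutive odd parts $(2k-1)+(2k+1)$, producing a "seed partition" $\overline{\lambda}$; subtracting the staircase $\beta=1+3+\cdots+(2m-1)$ entrywise yields an ordinary partition $\mu$ (with zeros allowed). Condition (c) forces every streak of consecutive odd parts in $\overline{\lambda}$ to have even length, and each such streak corresponds to a nonzero even part of $\mu$ repeated an even number of times; conversely each such group in $\mu$ can be realized in exactly two ways in $\lambda$. Hence the map $\lambda\mapsto\overline{\lambda}$ is $2^{\#}$-to-one, where $\#$ counts the nonzero even parts of $\mu$ appearing an even number of times, and the whole problem reduces to computing the generating function $A(t;q;a)$ of ordinary partitions with $a$ marking that statistic, evaluated at $a=2$. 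The sign $(-1)^k$ then appears purely algebraically: setting $a=2$ gives the numerator $\prod_n(1+tq^{2n}+t^2q^{4n})=\prod_n\frac{1-t^3q^{6n}}{1-tq^{2n}}$, and expanding $(t^3q^6;q^6)_\infty$ by Euler's identity \eqref{Euler 2} produces the alternating sum over $k$; the remaining factors expand by \eqref{Euler 1}, and the staircase weight $q^{(i+2j+3k)^2}$ is restored at the end. So the signs require no cancellation argument at all, and the difficult conditions (c), (d) are absorbed into the seed-partition correspondence and the choice $a=2$ — the two ideas your proposal would need but does not contain.
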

 
We also prove two theorems which are variants of Theorem \ref{2+2 not allowed} in the sense that we change the condition $(d)$ with different initial conditions. 

In Section \ref{Evidently Positive Series}, we construct evidently positive series as the generating functions of the partitions in the theorems presented in Section \ref{Alternative Construction}. To obtain those evidently positive series, we first construct an evidently positive series for a key infinite product. In that construction, a series of combinatorial moves is used to decompose an arbitrary partition into a base partition together with some auxiliary partitions that bijectively record the moves. We state some of our main results below.

\begin{theorem} \label{each part appears at most twice}
	For $n, m \in \mathbb{N}$, let $h(n, m)$ denote the number of partitions of $n$ into $m$ parts such that each part appears at most twice. Then,
	\begin{equation} \label{each part appears at most twice-generating function}
		\sum_{m, n \geq 0}h(n, m)q^nt^m=\sum_{\substack{n_1, n_2 \geq 0 \\ n_1=n_{11}+n_{12} \\ \beta}}\frac{q^{| \beta |}t^{2n_2+n_1}}{(q; q)_{n_{12}}(q^3; q^3)_{n_2}} 
	\end{equation} 
	where $\beta$ is the base partition with $n_2$ pairs,  $n_{11}$ immobile singletons, $n_{12}$ moveable singletons.
\end{theorem}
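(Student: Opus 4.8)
The plan is to prove \eqref{each part appears at most twice-generating function} by a weight- and length-preserving bijection, reading the right-hand side as the generating function for triples $(\beta,\mu,\nu)$ and matching these with the partitions counted on the left. First I would note that the left-hand side is the product $\prod_{i\ge1}(1+tq^i+t^2q^{2i})$, since a value $i$ may be used zero, one, or two times and contributes $1$, $tq^i$, or $t^2q^{2i}$ respectively. On the right, I would read each factor combinatorially: $1/(q;q)_{n_{12}}$ is the generating function for a partition $\mu$ into at most $n_{12}$ parts, and $1/(q^3;q^3)_{n_2}$ is the generating function for a partition $\nu$ all of whose parts are positive multiples of $3$ and which has at most $n_2$ parts. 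Thus a monomial $q^{|\beta|+|\mu|+|\nu|}t^{2n_2+n_1}$ records a base partition $\beta$ together with the data of how far each moveable singleton and each pair has been pushed up; the exponent $2n_2+n_1=2n_2+n_{11}+n_{12}$ is exactly the number of parts, because each of the $n_2$ pairs contributes two parts and each of the $n_1$ singletons one. It therefore suffices to exhibit a bijection between arbitrary partitions with every value repeated at most twice and such triples.

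The heart of the matter is to specify the base partitions and the two families of moves that $\mu$ and $\nu$ encode. I would declare a partition of the class to be a \emph{base} when it is irreducible, meaning that none of the moves below can be reversed; the symbol $\sum_\beta$ then ranges over exactly these irreducible configurations, and for each of them the triple of counts $(n_2,n_{11},n_{12})$ is read off directly. Two reversible moves act on a base. A \emph{singleton move} pushes a moveable singleton one step upward, adding $1$ to the weight while keeping every multiplicity at most $2$ and leaving the number of parts fixed; applying $\mu_i$ of these to the $i$-th moveable singleton, for $\mu=(\mu_1\ge\cdots\ge\mu_{n_{12}})$, produces the factor $1/(q;q)_{n_{12}}$. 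A \emph{pair move} slides a pair upward; since the two equal parts must leapfrog the nearest obstructing singleton in order to avoid creating a third copy of any value and to return the configuration to normal form, the smallest such move raises the weight by $3$ rather than $2$. This is precisely why the pair displacements are recorded by a partition into multiples of $3$ and yield $1/(q^3;q^3)_{n_2}$. The immobile singletons are those wedged against a neighbour in the base, which never move on their own and contribute only through $q^{|\beta|}$.

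Finally I would verify that $\Phi\colon(\beta,\mu,\nu)\mapsto\lambda$, obtained by applying the prescribed singleton and pair moves to $\beta$, is a bijection, by constructing the inverse explicitly: given $\lambda$ with each value repeated at most twice, one repeatedly undoes the largest admissible move, recording the sizes of the undone pair moves as the parts of $\nu$ and of the undone singleton moves as the parts of $\mu$, until an irreducible $\beta$ remains. I expect the main obstacle to be exactly the analysis of the pair moves and their interaction with the singletons. One must show that a pair move never leaves the class (never produces multiplicity $3$), that it is genuinely invertible, and that the leapfrog normalisation makes every pair displacement an exact multiple of $3$ and forces the clean split of the singletons into immobile and moveable ones; one must also check that the moves can be undone in any admissible order without affecting the resulting base, so that the decomposition of each $\lambda$ is unique. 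The attendant weight and length bookkeeping is then routine. As an independent check of the final identity one can compare it with the closed product form $\prod_{i\ge1}(1+tq^i+t^2q^{2i})=(t^3q^3;q^3)_\infty/(tq;q)_\infty$, whose right-hand side expands through \eqref{Euler 1} and \eqref{Euler 2} into a \emph{signed} series; the content of the theorem is precisely that the combinatorial moves repackage this into the evidently positive form \eqref{each part appears at most twice-generating function}.
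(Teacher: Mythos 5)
Your global strategy---decomposing each partition into an irreducible base plus auxiliary partitions that record singleton moves (weight $+1$ each) and pair moves (weight $+3$ each), then reading $1/(q;q)_{n_{12}}$ and $1/(q^3;q^3)_{n_2}$ as the generating functions of those records---is exactly the paper's strategy. But the engine that must drive it is broken in your write-up: you take a pair to consist of two \emph{equal} parts, and you justify the step of $3$ by a ``leapfrog over the nearest obstructing singleton.'' That justification cannot be repaired. A pair of equal parts has even weight, so a move that changes the weight by $3$ can never carry a pair of equal parts to another pair of equal parts; and in a partition with no singletons at all there is nothing to leapfrog, so your stated reason for the $+3$ evaporates precisely in the simplest case. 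Concretely, $\lambda=3+3$ already defeats the scheme: if pair moves cost $2$ (the natural move on equal pairs), its displacement from the base $1+1$ is $4$, contradicting the factor $1/(q^3;q^3)_{n_2}$; if pair moves cost $3$, then by parity every equal pair is irreducible, and the right-hand side of \eqref{each part appears at most twice-generating function} miscounts (e.g.\ the coefficient of $q^5t^2$ would be $3$, coming from the phantom term $q^2\cdot q^3$ over the base $1+1$ plus the two genuine singleton configurations, whereas there are only two partitions of $5$ into two parts). Either way the class of configurations is not closed under your moves and the identity fails already at $m=2$.

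The idea you are missing---and it is the heart of the paper's proof---is that a pair is \emph{either} two repeating parts $[k,k]$ \emph{or} two consecutive parts $[k,k+1]$, assigned greedily from the smallest parts and regrouped after every move. A single move switches the type, $[k-1,k-1]\mapsto[k,k+1]$ and $[k-2,k-1]\mapsto[k,k]$, and it is this alternation between the two pair types, not any interaction with singletons, that makes every move worth exactly $3$. The three seeds $[1,1]$, $[1,2]$, $[2,2]$ then generate disjoint step-$3$ orbits; in particular $3+3$ lies in the orbit of the \emph{consecutive} pair $[1,2]$, which your scheme does not recognize as a pair at all. Singletons enter as a second, separate subtlety: after all backward moves some singletons are trapped between a consecutive pair and a repeating pair (the paper's immobile singletons, which account for the forced zeros of $\theta$ and the split $n_1=n_{11}+n_{12}$), and only the trailing singletons are moveable. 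The verification burdens you correctly flag (closure under moves, invertibility, order-independence of undoing moves) are exactly what the paper's long case analyses carry out, but they can only be discharged once both pair types are admitted. The rest of your proposal---the product form of the left-hand side, the combinatorial reading of the right-hand side, and the consistency check against $(t^3q^3;q^3)_\infty/(tq;q)_\infty$ via \eqref{Euler 1} and \eqref{Euler 2}---is fine, but it is peripheral to the part of the argument that actually fails.
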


The definitions of \textit{base partition}, \textit{immobile singleton} and \textit{moveable singleton } will be presented in the proof of Theorem \ref{each part appears at most twice} and it is necessary to present the definitions in the flow of the proof. 
\begin{lemma} \label{polynomials}
	For $m_1, m_2, m_3 \in \mathbb{N}$ and $m \in \mathbb{Z}$, let $P(m_1,m_2, m_3, m+1; q)$ be the generating function of the base partitions $\beta$'s defined in the proof of Theorem \ref{each part appears at most twice} with $m_1$ pairs of two repeating parts, $m_2$ pairs of two consecutive parts and $m_3$ blocks, where a block is a partition into five parts which have the form $[k-1, k], k, [k+2, k+2]$. Then, 
	\begin{align} \label{P=P0+P1}
		P(m_1,m_2, m_3, m+1; q)=&P_0(m_1,m_2, m_3, m+1; q)\nonumber\\&+P_1(m_1,m_2, m_3, m+1; q)
	\end{align}
	where $P_0(m_1,m_2, m_3, m+1; q)$ is the generating function of the base partitions in which the largest pair is $[m, m]$ and $P_1(m_1,m_2, m_3,m+1; q)$ is the generating function of the base partitions in which the largest pair is $[m, m+1]$.  $P_0(m_1,m_2, m_3,m+1; q)$ and $P_1(m_1,m_2, m_3, m+1; q)$ satisfy the following functional equations: 
	\begin{align} \label{functional equation of P0}
		P_0(m_1,m_2, m_3, m+1; q)&=q^{2m}\Big[P_0(m_{1}-1,m_2, m_3, m; q) \nonumber \\&+P_1(m_{1}-1,m_2, m_3, m-1; q) \nonumber
		\\&+P_0(m_{1}-1,m_2, m_3, m-1; q)\Big] \nonumber
		\\&+q^{5m-7}\Big[P_1(m_1,m_2, m_3-1, m-3; q) \nonumber 
		\\&+P_0(m_1,m_2, m_3-1, m-3; q) \nonumber
		\\&+P_1(m_1,m_2, m_3-1, m-4; q)\Big] \\
		P_1(m_1,m_2, m_3, m+1; q)&=q^{2m+1}\Big[P_1(m_1,m_2-1, m_3, m; q) \nonumber \\&+P_0(m_1,m_2-1, m_3, m; q) \nonumber
		\\&+P_1(m_1,m_2-1, m_3, m-1; q)\Big] \label{functional equation of P1}
	\end{align}
	\begin{align} 
		&\begin{rcases} \label{initial conditions}
			P_{0/1}(m_1,m_2, m_3, m; q)&=0      \quad \quad \text{if $m <  0$}  \\
			P_{0/1}(m_1,m_2, m_3, 0; q)&=1 \\
			P_{0/1}(0,0, 0, m; q)&=0      \quad \quad \text{if $m \neq 1$} \\
			P_0(0, 0, 0, 1; q)&=1  \\
			P_1(0, 0, 0, 1; q)&=0
		\end{rcases}
		\text{initial conditions}
	\end{align}
	Moreover, $P(m_1,m_2, m_3, m+1; q)$'s are the polynomials of $q$ with evidently positive coefficients. 
\end{lemma}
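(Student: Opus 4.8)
The plan is to prove all four assertions by peeling the topmost object off a base partition $\beta$ and reading off the weight it contributes together with the configurations that may legally sit beneath it. Recall from the construction in the proof of Theorem \ref{each part appears at most twice} that a base partition is an increasing stack of three kinds of objects: repeating pairs $[m,m]$, consecutive pairs $[m,m+1]$, and blocks $[k-1,k],k,[k+2,k+2]$, subject to the difference conditions that make $\beta$ reduced (non-reducible under the combinatorial moves). The fourth argument $m+1$ records that the smaller entry of the topmost pair equals $m$. I would first split $P$ according to the type of that topmost pair: if it is a repeating pair $[m,m]$ the partition is counted by $P_0$, and if it is a consecutive pair $[m,m+1]$ it is counted by $P_1$. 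Since a block's top pair $[k+2,k+2]$ is itself a repeating pair, blocks are absorbed into $P_0$. Every base partition falls into exactly one class, which is precisely the statement \ref{P=P0+P1}.

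For the functional equation \ref{functional equation of P0}, I would treat the two ways a $P_0$-partition can arise. When the top object is a standalone repeating pair $[m,m]$, removing it contributes $q^{2m}$ and lowers $m_1$ by one; the difference conditions then allow exactly three continuations for the new top pair, namely $[m-1,m-1]$, $[m-2,m-1]$, and $[m-2,m-2]$, which are recorded by $P_0(\cdot\,,m)$, $P_1(\cdot\,,m-1)$, and $P_0(\cdot\,,m-1)$, giving the first bracket. When the top object is a block with top pair $[m,m]$, i.e. $k=m-2$, removing the whole block contributes $q^{5m-7}$ (its weight is $5k+3=5m-7$) and lowers $m_3$ by one; beneath its bottom entry $m-3$ the same analysis yields the three continuations of the second bracket. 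The equation \ref{functional equation of P1} is obtained identically: deleting the top consecutive pair $[m,m+1]$ contributes $q^{2m+1}$, lowers $m_2$ by one, and admits exactly the three continuations recorded on its right-hand side. No block term appears here because a block never has a consecutive top pair.

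The boundary relations \ref{initial conditions} I would check directly: a negative bound leaves no room for parts, an empty stack of objects forces the fourth argument to its terminal value, and the single-object cases pin down $P_0(0,0,0,1;q)=1$ and $P_1(0,0,0,1;q)=0$. Finally, polynomiality and evident positivity follow by induction. Each application of \ref{functional equation of P0} or \ref{functional equation of P1} strictly decreases $m_1+m_2+m_3$ together with the bound $m$, so the recursion terminates after finitely many steps and each $P$ is a finite $\mathbb{Z}_{\ge 0}$-combination of monomials in $q$, hence a polynomial. Since every coefficient appearing in the two functional equations is a single nonnegative power of $q$ and no minus sign ever occurs, nonnegativity of the coefficients propagates from the base values $0$ and $1$; this is exactly what "evidently positive" means here.

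I expect the main obstacle to be the justification that precisely these three continuations, and no others, may follow each removed object. This is not a formal manipulation but a case analysis of the difference conditions characterizing reduced base partitions: one must verify that the gaps permitted between the bottom entry of the removed object and the top entry of the next object are exactly those encoded by the fourth arguments $m,m-1$ after a pair and $m-3,m-4$ after a block, and that each such abutment indeed keeps $\beta$ reduced. A secondary technical point is bookkeeping the degenerate cases in which the removed object is the last one in $\beta$, so that the recursion lands cleanly on the initial conditions rather than producing spurious terms.
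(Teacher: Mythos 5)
Your proposal is correct and follows essentially the same route as the paper: split the base partitions by the type of the largest pair (with blocks absorbed into $P_0$ since their top pair $[m,m]$ is repeating), derive both functional equations by deleting the topmost standalone pair or block with weight contributions $q^{2m}$, $q^{5m-7}$, $q^{2m+1}$ and the corresponding parameter decrements, then verify the initial conditions and propagate positivity through the recursion. The paper's proof is exactly this, with the case analysis you flag as the main obstacle carried out by explicitly listing the admissible forms of the remaining partition after each deletion (its cases (a)--(c), (a$'$)--(c$'$), (d)--(f)), which match the continuations you name.
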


\begin{theorem} \label{2+2 not allowed-NEW}
	Consider the partitions satisfying the following conditions:
	
	\begin{enumerate} [($a$)]
		\item  No consecutive parts allowed. 
		\item  Odd parts do not repeat.
		\item  For a contiguous sub-partition $\lambda_i+\lambda_{i+1}+\lambda_{i+2}$, we have $| \lambda_i-\lambda_{i+2} | \geq 4 $ if $\lambda_{i+1}$ is even and appears more than once.
		\item $2+2$ is not allowed as a sub-partition.
	\end{enumerate}
	
	\noindent For $n, m \in \mathbb{N}$, let $kr_1(n, m)$ denote the number of partitions of n into m parts such that the partitions satisfy the conditions (a), (b), (c) and (d). Then,
	\begin{align} \label{first new generating function}
		&\sum_{m,n \geq 0}kr_1(n,m)q^nt^m \nonumber\\
		&= \sum_{\substack{m_1, m_2, m_3, \\ m,n_{12}, i, j, k\geq 0}} \frac{P(m_1, m_2, m_3, m+1; q^2)q^{2mn_{12}+2n_{12}^2+i+4j+(2m_1+2m_2+5m_3+n_{12}+i+2j+k)^2}}{(q^2;q^2)_{n_{12}}(q^6;q^6)_{m_1+m_2+2m_3}(q^2;q^2)_i(q^4;q^4)_j} \nonumber \\
		&\times t^{2m_1+2m_2+5m_3+n_{12}+i+2j+k} 
	\end{align}
	where $P(m_1, m_2, m_3, m+1; q)$'s are the polynomials of $q$  with evidently positive coefficients constructed in Lemma \ref{polynomials}. Moreover, the generating function (\ref{first new generating function}) is an evidently positive series. 
\end{theorem}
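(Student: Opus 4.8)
The plan is to prove \eqref{first new generating function} combinatorially, reusing the decomposition of Section~\ref{Alternative Construction} but feeding it the evidently positive base generating functions from Lemma~\ref{polynomials}. For each fixed profile $(m_1,m_2,m_3,m,n_{12},i,j,k)$ I would exhibit a weight- and length-preserving bijection between the partitions counted by $kr_1(n,m)$ that carry this profile and the pairs $(\beta,\text{auxiliary})$, where $\beta$ is a base partition enumerated by $P(m_1,m_2,m_3,m+1;q^2)$ and the auxiliary partitions have generating function $1/[(q^2;q^2)_{n_{12}}(q^6;q^6)_{m_1+m_2+2m_3}(q^2;q^2)_i(q^4;q^4)_j]$. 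The only new ingredient beyond Theorem~\ref{2+2 not allowed} is that the non-positive base generating function used there is replaced by the polynomials $P$; the argument $q^2$ records that the base partitions are realized with doubled parts, which is precisely what forces the even moduli $q^2,q^4,q^6$ in the denominators.

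First I would fix the bookkeeping for $\beta$. The base partition is built from $m_1$ pairs of repeating parts, $m_2$ pairs of consecutive parts and $m_3$ blocks $[k-1,k],k,[k+2,k+2]$, its largest pair sitting at height $m$; summing $q^{2|\beta|}$ over all such $\beta$ gives exactly $P(m_1,m_2,m_3,m+1;q^2)$ by Lemma~\ref{polynomials}. The length contributed is $2m_1+2m_2+5m_3$, and the modulus-$6$ factor $(q^6;q^6)_{m_1+m_2+2m_3}$ is indexed by the total number of pair-type units, each block supplying one consecutive and one repeating pair. This internal consistency of the exponents is the first thing I would verify.

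Next I would describe the four independent families of moves and match them to the four reciprocal factors and to the remaining powers of $q$ and $t$. The $n_{12}$ moveable singletons are pushed above the base and freely rearranged, governed by $1/(q^2;q^2)_{n_{12}}$; their insertion produces the quadratic self-interaction $2n_{12}^2$ together with the coupling $2mn_{12}$ to the base height $m$. The parts indexed by $i$ and the pairs indexed by $j$ are inserted with generating functions $1/(q^2;q^2)_i$ and $1/(q^4;q^4)_j$, contributing $q^{i}$, $q^{4j}$ and $t^{i+2j}$, while the $k$ additional parts are rigid extensions that feed only the length count. The global shift needed to keep all parts separated after every insertion supplies the square $(2m_1+2m_2+5m_3+n_{12}+i+2j+k)^2=M^2$, where $M$ is the total number of parts; this is the exponent of $t$ and the leading quadratic contribution to the exponent of $q$. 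Summing the resulting contribution of each profile over all admissible $(m_1,m_2,m_3,m,n_{12},i,j,k)$ reproduces the right-hand side of~\eqref{first new generating function}, and evident positivity is immediate because each $P$ has nonnegative coefficients by Lemma~\ref{polynomials}, every $1/(q^a;q^a)_b$ expands positively, and all displayed exponents are nonnegative.

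I expect the main obstacle to be proving that the system of moves is simultaneously reversible and exhaustive once the four part-types interact: I must check that after the moveable singletons, the $i$ singletons, the $j$ pairs and the $k$ free parts have all been inserted, the outcome still satisfies $(a)$--$(d)$, and that every admissible partition arises from exactly one profile and one auxiliary tuple. The most delicate point is the cross-term $2mn_{12}$: I would have to show that the distance by which the singletons are shifted past the base is pinned down rigidly by the largest pair at height $m$, so that the weight accounting produces this coupling with no overcounting, and that condition $(d)$ (no $2+2$) is exactly what constrains the admissible range of $m$ and the base configurations that Lemma~\ref{polynomials} enumerates.
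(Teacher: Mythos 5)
Your plan assembles the right ingredients --- Lemma \ref{polynomials} for the base partitions, the four reciprocal factors, the staircase square $M^2$ --- but the construction you propose aims at the wrong objects, and the paper's proof shows that none of the work you anticipate is actually needed. The paper builds no new bijection for Theorem \ref{2+2 not allowed-NEW}: it factors $A(t;q;2)$ from Proposition \ref{evens appearing even times} as $H(t;q^2)\cdot\frac{1}{(tq;q^2)_\infty(t^2q^4;q^4)_\infty}\cdot\frac{1}{1-t}$, substitutes the evidently positive series of Corollary \ref{h(n, m) and polynomials} for $H(t;q^2)$, expands the remaining reciprocals by Euler's identity \eqref{Euler 1}, and finally appends $q^{M^2}$ to the coefficient of $t^M$, the last step being licensed once and for all by the seed-partition/staircase argument in the proof of Theorem \ref{2+2 not allowed}. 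The point your proposal misses is where the combinatorics lives: the base partition $\beta$ and the $n_{12}$, $i$, $j$, $k$ auxiliaries are not pieces ``inserted'' into the $kr_1$-partition $\lambda$; they decompose the difference partition $\mu=\overline{\lambda}-(1+3+\cdots+(2M-1))$, where $\overline{\lambda}$ is the seed partition. Since $\mu$ is an \emph{unrestricted} partition (zeros allowed), the four families of data are automatically independent, so there is nothing to check about conditions $(a)$--$(d)$ ``after insertion'' and no reversibility/exhaustiveness argument for interacting moves to supply: those restrictions were already absorbed entirely by Theorem \ref{2+2 not allowed}, and the only bijective content beyond that is Theorem \ref{each part appears at most twice}/Lemma \ref{polynomials}, which is quoted, not reproved. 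The obstacle you single out as the main difficulty is therefore not an obstacle in the correct setup; treating it as one indicates your bijection is being set up at the level of $\lambda$ rather than $\mu$.

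Two further concrete gaps. First, the correspondence between $kr_1$-partitions and this data is not the naive one your insertion picture suggests: each seed partition yields $2^{\#}$ partitions counted by $kr_1$ ($\#$ being the number of nonzero even parts of $\mu$ with even multiplicity), and this multiplicity is exactly what the numerator $H_0(t;q)=\prod_{n\geq 1}(1+tq^{2n}+t^2q^{4n})$ encodes (a marker of $0$, $1$ or $2$ copies of each even size, times free pairs from $1/(1-t^2q^{4n})$); this $a=2$ device of Proposition \ref{evens appearing even times} is absent from your plan, yet it is the entire reason the polynomials $P$ enter, and with argument $q^2$. Second, condition $(d)$ does not ``constrain the admissible range of $m$ and the base configurations'': the base configurations come from $H_0$, which is common to Theorems \ref{2+2 not allowed-NEW}, \ref{1 not allowed-NEW} and \ref{1, 2, 3 not allowed-NEW}. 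Condition $(d)$ governs only the zeros of $\mu$, i.e.\ it is what permits the unrestricted factor $\frac{1}{1-t}=\sum_{k\geq 0}t^k$; compare \eqref{second new generating function}, where $(d')$ forces the zeros to come in pairs and the $k$-sum disappears. Likewise the cross-term $2mn_{12}$ requires no new rigidity argument: it is just the doubled minimal weight $2(mn_{12}+n_{12}^2)$ of the moveable singletons $m+1,m+3,\ldots,m+2n_{12}-1$, already built into Lemma \ref{polynomials} and Corollary \ref{h(n, m) and polynomials}.
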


In Section \ref{secSpecialCases}, we list a few closed formulas 
for the polynomials $P(m_1,m_2, m_3, m+1; q)$ in Lemma \ref{polynomials} in some special cases.  
We also list some of the $P(m_1,m_2, m_3, m+1; q)$'s for small values of the parameters.  

We conclude with some commentary and an open problem in Section \ref{Conclusion}.

\section{An Alternative Construction for a Family of Partition Generating Functions due to Kanade and Rusell}
\label{Alternative Construction}	

\begin{defn} \normalfont
	Let $\lambda=\lambda_1+ \ldots+ \lambda_m$ be a partition counted by $kr_1(n, m)$. If there exist repeating even parts $(2k)+(2k)$ in $\lambda$, then we rewrite those parts as consecutive odd parts $(2k-1)+(2k+1)$. We call the partition formed after this transformation a "\emph{seed partition}".
\end{defn}

\begin{prop}\label{evens appearing even times}
	The ordinary partitions in which $0$ may appear as a part is generated by 	
	\begin{equation*}
		A(t;q;a)=\prod_{n = 1}^{\infty} \frac{(1+tq^{2n}+ (a-1)t^2q^{4n})}{(1-tq^{2n-1})(1-t^2q^{4n})}.\frac{1}{(1-t)},
	\end{equation*}
	where the exponent of $t$ keeps track of the number of parts and the exponent of $a$  keeps  track of the number of non-zero even parts that appear an even number of times.
\end{prop}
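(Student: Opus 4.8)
The plan is to use the fact that a partition in which zeros are permitted is completely determined by recording, for each admissible part value, its multiplicity, and that these multiplicities are chosen independently across distinct part values. Because of this independence, the generating function $A(t;q;a)$ factors as an infinite product with one factor per part value, and I would verify the three types of factors (the zero part, the odd parts, and the even parts) in turn.

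First I would treat the part $0$. Since any nonnegative number of zeros may be adjoined, and each zero contributes $t$ to the part count but nothing to the weight, the zeros contribute $\sum_{k \geq 0} t^k = \frac{1}{1-t}$, which is the trailing factor in the claimed formula. Next I would treat the odd parts: for each $n \geq 1$ the value $2n-1$ may occur with any multiplicity $k \geq 0$, and $k$ copies contribute $t^k q^{(2n-1)k}$, so summing over $k$ gives $\frac{1}{1-tq^{2n-1}}$; taking the product over $n \geq 1$ accounts for every odd value and matches the factor $\prod_{n\geq 1}\frac{1}{1-tq^{2n-1}}$.

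The crux is the even parts, where the variable $a$ enters. Fix $n \geq 1$ and set $x = tq^{2n}$, so that a single copy of the part $2n$ contributes $x$ and $k$ copies contribute $x^k$. By definition $a$ records each nonzero even value that occurs an even number of times, so the coefficient attached to $x^k$ must be $1$ when $k = 0$ or $k$ is odd, and $a$ when $k$ is even and positive. Hence the factor associated to the value $2n$ is
\[
\sum_{k \geq 0} c_k x^k = 1 + \sum_{j \geq 0} x^{2j+1} + a \sum_{j \geq 1} x^{2j} = 1 + \frac{x}{1-x^2} + \frac{a x^2}{1-x^2} = \frac{1 + x + (a-1)x^2}{1-x^2}.
\]
Substituting back $x = tq^{2n}$, so that $x^2 = t^2 q^{4n}$, produces exactly $\frac{1 + tq^{2n} + (a-1)t^2q^{4n}}{1-t^2q^{4n}}$, and multiplying over $n \geq 1$ gives the even-part factors appearing in $A(t;q;a)$.

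Finally, multiplying the zero factor, the odd-part factors, and the even-part factors together yields the stated product, completing the proof. The only genuine subtlety is the bookkeeping in the even-part factor; once the coefficient pattern $1, 1, a, 1, a, \dots$ is identified and summed via the geometric series $\frac{1}{1-x^2}$, the identity is immediate. No convergence issue arises, since everything is first treated as a formal power series in $t$ and $q$, with the hypothesis $|q| < 1$ securing analytic convergence of the infinite product.
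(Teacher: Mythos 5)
Your proposal is correct and follows essentially the same route as the paper: both decompose the generating function into one factor per part value (the factor $\frac{1}{1-t}$ for zeros, geometric series for odd parts, and the multiplicity pattern $1,1,a,1,a,\dots$ for each even part), and both sum the even-part factor to $\frac{1+tq^{2n}+(a-1)t^2q^{4n}}{1-t^2q^{4n}}$. Your write-up just makes explicit the geometric-series algebra that the paper summarizes as ``using geometric series and elementary factorization identities.''
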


\begin{proof}

We will keep track of the number of parts by the exponent of $t$, and keep track of the number of non-zero even parts that appear an even number of times by the exponent of $a$. Since we don't put any restrictions on parts, we will use all parts in all possible ways when we write the generating function. Let 
\begin{align*}
	A(t;q;a)=&(1+tq^0+t^2q^0+t^3q^0+t^4q^0+t^5q^0+ \ldots ).\\
	&(1+tq+t^2q^2+t^3q^3+t^4q^4+t^5q^5+ \ldots ).\\
	& (1+tq^2+at^2q^4+t^3q^6+at^4q^8+t^5q^{10}+ \ldots ).\\
	& (1+tq^3+t^2q^6+t^3q^9+t^4q^{12}+t^5q^{15}+ \ldots ).\\
	& (1+tq^4+at^2q^8+t^3q^{12}+at^4q^{16}+t^5q^{20}+ \ldots ).\\
	& (1+tq^5+t^2q^{10}+t^3q^{15}+t^4q^{20}+t^5q^{25}+ \ldots ).\\
	& \vdots 
\end{align*}

\noindent The first infinite sum is for $0$'s, the second infinite sum is for $1$'s, etc. For example, the term $at^4q^8$ means that we use four $2$'s, and since $2$ appears an even number of times in this case, we have a factor of $a$.

\noindent Using geometric series and elementary factorization identities \cite{GA-number theory}, we obtain:
\begin{align*}
	A(t;q;a)=&\frac{1}{(1-t)}.\frac{1}{(1-tq)}.\frac{(1+tq^2+(a-1)t^2q^4)}{(1-t^2q^4)}.\\ &.\frac{1}{(1-tq^3)}.\frac{(1+tq^4+(a-1)t^2q^8)}{(1-t^2q^8)}. \ldots\\
	=&\prod_{n = 1}^{\infty} \frac{(1+tq^{2n}+ (a-1)t^2q^{4n})}{(1-tq^{2n-1})(1-t^2q^{4n})}.\frac{1}{(1-t)}.
\end{align*}
\end{proof}

\noindent \emph{\textbf{Proof of Theorem \ref{2+2 not allowed}:}} Let $\lambda=\lambda_1+ \ldots +\lambda_m$ be a partition counted by $kr_1(n, m)$ and $\overline{\lambda}=\overline{\lambda_1}+ \ldots +\overline{\lambda_m}$ be the corresponding seed partition. 
\noindent We define $\beta=1+3+5+ \ldots +(2m-1)$ as the "base partition"  with $m$ consecutive odd parts. Let $\mu= \mu_1+ \ldots +\mu_m$, where $\mu_i=\overline{\lambda_i}-\beta_i$ for all $i=1, \ldots, m$.
\noindent Whenever the seed partition $\overline{\lambda}$ has a streak of consecutive odd parts, the number of parts in that streak must be even and that streak will give rise to a group of the same repeating even parts in $\mu$. Assume that $\overline{\lambda}=3+5+7+10$. Then, by the definition of the seed partition, we expect that $\lambda=4+4+7+10$ or $\lambda=3+6+6+10$. But, in both cases, the condition $(c)$ is violated. From this example, we see that the number of consecutive odd parts in a streak of $\overline{\lambda}$ must be even.

\noindent The generating function for ordinary partitions where we keep track of the number of non-zero even parts that appear an even number of times is given in Proposition \ref{evens appearing even times}.

\noindent We recall that the exponent of $a$ keeps track of the number of non-zero even parts that appear an even number of times, and from a seed partition we can generate
\begin{center}
	$2^{\text{the number of non-zero even parts appearing an even number of times in $\mu$} }$
\end{center} 
partitions satisfying the conditions, together with the seed partition itself. Therefore, we take $a=2$ in $A(t;q;a)$ and we get the following infinite product:

\begin{equation*} 
	A(t;q;2)=\prod_{n= 1}^{\infty} \frac{(1+tq^{2n}+ t^2q^{4n})}{(1-tq^{2n-1})(1-t^2q^{4n})}.\frac{1}{(1-t)}.
\end{equation*}

\noindent After some straightforward calculations, we obtain:
\begin{align}	 \label{without weight of base partition}
	&\prod_{n= 1}^{\infty} \frac{(1+tq^{2n}+ t^2q^{4n})}{(1-tq^{2n-1})(1-t^2q^{4n})}.\frac{1}{(1-t)}=\prod_{n = 1}^{\infty} \frac{(1-t^3q^{6n})}{(1-tq^{2n-1})(1-tq^{2n})(1-t^2q^{4n})(1-t)}  \nonumber \\
	&=\frac{(t^3q^6; q^6)_\infty}{(t; q)_\infty(t^2q^4; q^4)_\infty} \nonumber \\
	&= \left( \sum_{i \geq 0}\frac{t^i}{(q; q)_i} \right) \left( \sum_{j \geq 0}\frac{t^{2j}q^{4j}}{(q^4; q^4)_j} \right)  \left( \sum_{k \geq 0}\frac{(-1)^kt^{3k}q^{6k+3k(k-1)}}{(q^6; q^6)_k} \right) \nonumber \\
	&=\sum_{i, j, k \geq 0} \frac{(-1)^kt^{i+2j+3k}q^{4j+3k^2+3k}}{(q; q)_i(q^4; q^4)_j(q^6; q^6)_k}.
\end{align}

\noindent For the third identity, we use the identities (\ref{Euler 1}) and (\ref{Euler 2}). The remaining work is adding the weight of base partition to the exponent of $q$ in (\ref{without weight of base partition}).  Consider the base partition $\beta=1+3+5+ \ldots +(2m-1)$. The partition $\beta$ has $m$ parts and its weight is $m^2$. Let $m=i+2j+3k$. The weight of the base partition $\beta$ becomes $(i+2j+3k)^2$. Therefore, the generating function of partitions satisfying the conditions $(a)$, $(b)$, $(c)$ and $(d)$ is
\begin{align*} 
	\sum_{m,n \geq 0}kr_1(n,m)q^nt^m&=\sum_{i, j, k \geq 0} \frac{(-1)^kt^{i+2j+3k}q^{4j+3k^2+3k}\color{red}{q^{(i+2j+3k)^2}}}{(q; q)_i(q^4; q^4)_j(q^6; q^6)_k} \nonumber \\	
	&=\sum_{i, j, k \geq 0}(-1)^k \frac{t^{i+2j+3k}q^{(i+2j+3k)(i+2j+3k-1)+i+6j+3k^2+6k}}{(q;q)_i(q^4;q^4)_j(q^6;q^6)_k}. 
\end{align*}

\noindent The latter generating function is exactly the sum (3.2) which is constructed by Kanade and Russell in \cite{sk-mcr-staircases} (Where they used $x$ for $t$).
\qed

\begin{ex} \normalfont
	Let $\lambda=3+5+8+11+13+19+21+23+25$. In this case, the seed partition is $\overline{\lambda}=3+5+8+11+13+19+21+23+25$ and the base partition is $\beta=1+3+5+7+9+11+13+15+17$. Then, we get $\mu= 2+2+3+4+4+8+8+8+8$.  The sum $2+2$ corresponds to  $3+5$ in $\overline{\lambda}$ and it corresponds to $1+3$ in $\beta$. The sum $4+4$ corresponds to  $11+13$ in $\overline{\lambda}$ and it corresponds to $7+9$ in $\beta$. The sum $8+8+8+8$ correspond to $19+21+23+25$ in $\overline{\lambda}$ and it corresponds to $11+13+15+17$ in $\beta$. Moreover, the number of non-zero even parts that appear an even number of times in $\mu$ determines the number of partitions that can be generated from the seed partition $\overline{\lambda}$. In this example, we have three non-zero even parts that appear an even number of times in $\mu$, namely $2$, $4$ and $8$. So, from the seed partition $\overline{\lambda}$, we can generate $2^3=8$ new partitions (together with $\overline{\lambda}$) counted by $kr_1(n, m)$, where $n=128$, $m=9$. Let us list all of these partitions:
	
	\begin{center}
		$3+5+8+11+13+19+21+23+25=\overline{\lambda}$\\
		$\color{red}{4+4}$ $+$ $8+11+13+19+21+23+25$\\
		$3+5+8$ $+$ $\color{green}{12+12}$ $+$ $19+21+23+25$\\
		$\color{red}{4+4}$ $+$ $8$ $+$ $\color{green}{12+12}$ $+$ $19+21+23+25$\\
		$3+5+8+11+13$ $+$ $\color{blue}{20+20+24+24}$\\
		$\color{red}{4+4}$ $+$ $8+11+13$ $+$ $\color{blue}{20+20+24+24}$\\
		$3+5+8$ $+$ $\color{green}{12+12}$ $+$ $\color{blue}{20+20+24+24}$\\
		$\color{red}{4+4}$ $+$ $8$ $+$ $\color{green}{12+12}$ $+$ $\color{blue}{20+20+24+24}$.
	\end{center}
	
\end{ex}

\begin{theorem} \cite{sk-mcr-staircases} \label{1 not allowed}
	Consider the partitions satisfying the following conditions:
	
	\begin{enumerate}
		\item [(a)] No consecutive parts allowed. 
		\item [(b)] Odd parts do not repeat.
		\item [(c)] For a contiguous sub-partition $\lambda_i+\lambda_{i+1}+\lambda_{i+2}$, we have $| \lambda_i-\lambda_{i+2} | \geq 4 $ if $\lambda_{i+1}$ is even and appears more than once.
		\item [(d')] $1$ is not allowed to appear as a part.
	\end{enumerate}
	
	\noindent For $n, m \in \mathbb{N}$, let $kr_2(n, m)$ denote the number of partitions of n into m parts such that the partitions satisfy the conditions (a), (b), (c) and (d'). Then,
	\begin{equation} \label{second generating function}
		\sum_{m,n \geq 0}kr_2(n,m)q^nt^m= \sum_{i, j, k \geq 0}(-1)^k \frac{t^{i+2j+3k}q^{(i+2j+3k)(i+2j+3k-1)+2i+2j+3k^2+6k}}{(q;q)_i(q^4;q^4)_j(q^6;q^6)_k}.
	\end{equation}  
\end{theorem}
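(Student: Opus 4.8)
The plan is to run the argument for Theorem \ref{2+2 not allowed} essentially unchanged, modifying only the bottom of the construction so as to encode (d') in place of (d). Starting from a partition $\lambda=\lambda_1+\cdots+\lambda_m$ counted by $kr_2(n,m)$, I would form its seed partition $\overline{\lambda}$ by the same rule (rewrite each block $(2k)+(2k)$ of repeating even parts as consecutive odd parts $(2k-1)+(2k+1)$), keep the same base partition $\beta=1+3+\cdots+(2m-1)$ of weight $m^2$, and set $\mu_i=\overline{\lambda}_i-\beta_i$. Since conditions (a), (b), (c) are identical in the two theorems, the bookkeeping of Proposition \ref{evens appearing even times} with $a=2$ is unaffected: the number of non-zero even parts of $\mu$ that appear an even number of times again records the streaks of consecutive odd parts of $\overline{\lambda}$ that may be independently toggled between their odd and even shapes.

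The only place where (d') behaves differently from (d) is at the parts of $\mu$ equal to $0$. A run of $r$ zeros in $\mu$ means that $\overline{\lambda}$ begins with the streak $1,3,\ldots,2r-1$. Under (d) this initial streak was forced to remain odd (rewriting it would create a forbidden $2+2$), yielding exactly one preimage for each $r\geq 0$; this is precisely the factor $\tfrac{1}{1-t}$ attached to the part $0$ in Proposition \ref{evens appearing even times}. Under (d') the situation reverses: no $1$ may survive in $\lambda$, so the initial streak must be rewritten in even shape. I would check, case by case, that this rewriting produces a legal $kr_2$-partition exactly when $r$ is even (when $r$ is odd one is forced either to keep a part $1$ or to create a sub-partition violating (c), such as $2+2+5$), and that in the admissible case the preimage is unique. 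Consequently the single factor $\tfrac{1}{1-t}$ must be replaced by $\tfrac{1}{1-t^2}$, while every other factor of $A(t;q;2)$ is untouched.

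With this one substitution, the same simplification as in the display (\ref{without weight of base partition}) gives
\begin{align*}
\frac{1}{1-t^2}\,\prod_{n=1}^{\infty}\frac{1+tq^{2n}+t^2q^{4n}}{(1-tq^{2n-1})(1-t^2q^{4n})}=\frac{(t^3q^6;q^6)_\infty}{(tq;q)_\infty\,(t^2;q^4)_\infty},
\end{align*}
where I use $1+tq^{2n}+t^2q^{4n}=\tfrac{1-t^3q^{6n}}{1-tq^{2n}}$ and absorb the new part-$0$ factor into $(t^2;q^4)_\infty$. Expanding the three pieces by Euler's identities (\ref{Euler 1}) and (\ref{Euler 2}) yields
\begin{align*}
\frac{(t^3q^6;q^6)_\infty}{(tq;q)_\infty\,(t^2;q^4)_\infty}=\sum_{i,j,k\geq 0}\frac{(-1)^k\,t^{i+2j+3k}\,q^{i+3k^2+3k}}{(q;q)_i\,(q^4;q^4)_j\,(q^6;q^6)_k}.
\end{align*}

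Finally I would restore the weight $|\beta|=m^2=(i+2j+3k)^2$ of the base partition, inserting $q^{(i+2j+3k)^2}$ into each summand exactly as in the proof of Theorem \ref{2+2 not allowed}. The identity $(i+2j+3k)^2+i+3k^2+3k=(i+2j+3k)(i+2j+3k-1)+2i+2j+3k^2+6k$ then rearranges the exponent into the form appearing on the right-hand side of (\ref{second generating function}), completing the proof. The main obstacle, and the only genuinely new ingredient compared with Theorem \ref{2+2 not allowed}, is the parity analysis of the second paragraph: one must verify that (d') admits a legal preimage of the initial streak precisely for even $r$, and that the interaction of the rewritten bottom with the first part above it never introduces a further restriction, so that the passage from (d) to (d') amounts to exactly the replacement of $\tfrac{1}{1-t}$ by $\tfrac{1}{1-t^2}$.
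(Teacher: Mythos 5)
Your proposal is correct and follows essentially the same route as the paper: the paper likewise forms an ``almost-seed'' partition, subtracts the staircase base $\beta=1+3+\cdots+(2m-1)$, and isolates exactly your parity observation (initial streaks of consecutive odd parts, i.e.\ runs of zeros in $\mu$, must have even length under (d')), which it packages as Proposition \ref{evens appearing even times-with even number of 0s} by replacing the factor $\tfrac{1}{1-t}$ with $\tfrac{1}{1-t^2}$, before performing the identical product manipulation, Euler expansions, and restoration of the weight $q^{(i+2j+3k)^2}$.
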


Before the proof of Theorem \ref{1 not allowed}, we give a definition and prove a proposition. 
\begin{defn} \normalfont \label{almost-seed partition}
	Let $\lambda$ be a partition counted by $kr_2(n, m)$. If there exist repeating even parts $(2k)+(2k)$ in $\lambda$, then we rewrite those parts as consecutive odd parts $(2k-1)+(2k+1)$. We call the partition formed after this transformation an "\emph{almost-seed partition}".
\end{defn}
\noindent If the partition $\lambda$ has the sub-partition $2+2$, the partition which is constructed by the method in Definition \ref{almost-seed partition} is a partition satisfying all above conditions $(a), (b), (c)$ except the last condition $(d')$. After the transformation $2+2$ becomes $1+3$ which is ruled out. That is why we use the term "almost-seed" in our definition. 

\begin{prop}\label{evens appearing even times-with even number of 0s}
	The ordinary partitions in which $0$ may appear as a part, and if it appears as a part, it must appear an even number of times is generated by 
	
	\begin{equation*}
		B(t; q; a)=\prod_{n= 1}^{\infty} \frac{(1+tq^{2n}+ (a-1)t^2q^{4n})}{(1-tq^{2n-1})(1-t^2q^{4n})}.\frac{1}{(1-t^2)},
	\end{equation*}	
	where the exponent of $t$ keeps track of the number of parts and the exponent of $a$  keeps  track of the number of non-zero even parts that appear an even number of times. 
\end{prop}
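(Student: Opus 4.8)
The plan is to mirror the proof of Proposition~\ref{evens appearing even times}, exploiting the fact that the only distinction between $A(t;q;a)$ and $B(t;q;a)$ is the multiplicity constraint placed on the part $0$. As before, I would track the number of parts by the exponent of $t$, the weight by the exponent of $q$, and the number of non-zero even parts occurring an even number of times by the exponent of $a$.

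First I would write $B(t;q;a)$ as an infinite product of generating factors, one factor for each admissible part value $0,1,2,3,\dots$. For each non-zero part the factor is identical to the one appearing in the proof of Proposition~\ref{evens appearing even times}: a part $2n-1$ contributes $1+tq^{2n-1}+t^2q^{2(2n-1)}+\cdots=\frac{1}{1-tq^{2n-1}}$, while a part $2n$ contributes $1+tq^{2n}+at^2q^{4n}+t^3q^{6n}+at^4q^{8n}+\cdots$, where the factor $a$ is inserted precisely when $2n$ is used an even, positive number of times.

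The single change occurs in the factor for the part $0$. Since a zero adds nothing to the weight but is still counted by $t$, and since now zeros are permitted only in even multiplicity, this factor becomes $1+t^2+t^4+\cdots=\frac{1}{1-t^2}$ in place of the $\frac{1}{1-t}$ that appeared for unrestricted zeros in Proposition~\ref{evens appearing even times}.

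Finally, I would apply the geometric series together with the elementary factorization $1+x+ax^2+x^3+ax^4+\cdots=\frac{1+x+(a-1)x^2}{1-x^2}$ (with $x=tq^{2n}$), exactly as in \cite{GA-number theory} and the earlier proof, to each even-part factor and collect everything into the product. This yields
\[
B(t;q;a)=\prod_{n=1}^{\infty}\frac{1+tq^{2n}+(a-1)t^2q^{4n}}{(1-tq^{2n-1})(1-t^2q^{4n})}\cdot\frac{1}{1-t^2},
\]
as claimed. There is no genuine obstacle here; the computation is routine once the bookkeeping is set up, and the only point requiring care is to confirm that restricting the part $0$ to even multiplicity is correctly encoded by replacing $\frac{1}{1-t}$ with $\frac{1}{1-t^2}$ while leaving every non-zero factor untouched.
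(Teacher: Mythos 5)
Your proposal is correct and follows essentially the same route as the paper's own proof: factor the generating function part-by-part, use $\frac{1}{1-t^2}$ for the zeros (even multiplicity only), insert $a$ at even multiplicities of each non-zero even part, and apply the factorization $1+x+ax^2+x^3+ax^4+\cdots=\frac{1+x+(a-1)x^2}{1-x^2}$ to each even-part factor. The only cosmetic difference is that the paper writes out the individual series explicitly before invoking the same geometric-series and factorization identities, whereas you state the key identity directly.
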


\noindent 
\begin{proof}
We will keep track of the number of parts by the exponent of $t$, and keep track of the number of non-zero even parts that appear an even number of times by the exponent of $a$. Since we don't put any restrictions on parts, we will use all parts in all possible ways when we write the generating function. Let
\begin{align*}
	B(t; q; a)=&(1+t^2q^0+t^4q^0+t^6q^0+t^8q^0+ \ldots ).\\
	&(1+tq+t^2q^2+t^3q^3+t^4q^4+t^5q^5+ \ldots ).\\
	& (1+tq^2+at^2q^4+t^3q^6+at^4q^8+t^5q^{10}+ \ldots ).\\
	& (1+tq^3+t^2q^6+t^3q^9+t^4q^{12}+t^5q^{15}+ \ldots ).\\
	& (1+tq^4+at^2q^8+t^3q^{12}+at^4q^{16}+t^5q^{20}+ \ldots ).\\
	& (1+tq^5+t^2q^{10}+t^3q^{15}+t^4q^{20}+t^5q^{25}+ \ldots ).\\
	& \vdots 
\end{align*}

\noindent The first infinite sum is for $0$'s, the second infinite sum is for $1$'s, etc. For example, the term $at^4q^8$ means that we use four $2$'s, and since $2$ appears an even number of times in this case, we have a factor of $a$.

\noindent Using geometric series and elementary factorization identities, we obtain:
\begin{align*}
	B(t;q;a)=&\frac{1}{(1-t^2)}.\frac{1}{(1-tq)}.\frac{(1+tq^2+(a-1)t^2q^4)}{(1-t^2q^4)}.\\
	&.\frac{1}{(1-tq^3)}.\frac{(1+tq^4+(a-1)t^2q^8)}{(1-t^2q^8)}. \ldots\\
	=&\prod_{n = 1}^{\infty} \frac{(1+tq^{2n}+ (a-1)t^2q^{4n})}{(1-tq^{2n-1})(1-t^2q^{4n})}.\frac{1}{(1-t^2)}.
\end{align*}

\end{proof}

\noindent \emph{\textbf{Proof of Theorem \ref{1 not allowed}:}} We follow the idea in the proof of Theorem \ref{2+2 not allowed}. Let $\lambda=\lambda_1+ \ldots+ \lambda_m$ be a partition counted by $kr_2(n, m)$ and $\overline{\lambda}=\overline{\lambda_1}+ \ldots+ \overline{\lambda_m}$ be the corresponding almost-seed partition. We define $\beta=1+3+5+ \ldots+ (2m-1)$ as the "base partition" with $m$ consecutive odd parts. Let $\mu= \mu_1+ \ldots+ \mu_m$, where $\mu_i=\overline{\lambda_i}-\beta_i$ for all $i=1, \ldots, m$. Whenever the almost-seed partition $\overline{\lambda}$ has a streak of consecutive odd parts, this streak will give rise to a group of the same repeating even parts in $\mu$. We also note that if the partition $\mu$ consists of a streak of $0$'s at the beginning, then the number of $0$'s in this streak has to be an even number. This fact follows from the difference condition $(c)$.  If an almost-seed partition starts with $1+3$, then during the generation of all partitions satisfying the conditions, we change $1+3$ as $2+2$ due to the condition $(d')$. We also observe that if an almost seed partition starts with a streak of consecutive odd parts, then this streak must have an even number of parts. To the contrary, assume that $\overline{\lambda}$ starts  with an odd number of consecutive parts, set $\overline{\lambda}=1+3+5+10$. By definition of the almost seed partition, the corresponding partition $\lambda$ to $\overline{\lambda}$ can be $2+2+5+10$ or $1+4+4+10$. The partition $2+2+5+10$ violates the condition $(c)$, the partition $1+4+4+10$ violates the conditions $(c)$ and $(d')$. This also explains why the number of $0$'s in a streak of $\mu$ must be even, if $\mu$ starts with $0$'s.

\noindent The generating function for ordinary partitions where we keep track of the number of non-zero even parts that appear an even number of times and if $0$ appears as a part then it must appear an even number of times is given in Proposition \ref{evens appearing even times-with even number of 0s}.

\noindent We recall that the exponent of $a$ keeps track of the number of non-zero even parts that appear an even number of times, and from an almost-seed partition we can generate
\begin{center}
	$2^{\text{the number of non-zero even parts appearing an even number of times in $\mu$} }$
\end{center} 
partitions satisfying the conditions. Therefore, we take $a=2$ in $B(t; q; a)$ and we get the following infinite product:

\begin{equation*}
	B(t; q; 2)=	\prod_{n = 1}^{\infty} \frac{(1+tq^{2n}+ t^2q^{4n})}{(1-tq^{2n-1})(1-t^2q^{4n})}.\frac{1}{(1-t^2)}.
\end{equation*}

\noindent After some straightforward calculations, we obtain:
\begin{align} \label{without weight of base partition-2}
	&\prod_{n = 1}^{\infty} \frac{(1+tq^{2n}+ t^2q^{4n})}{(1-tq^{2n-1})(1-t^2q^{4n})}.\frac{1}{(1-t^2)}	
	=\prod_{n= 1}^{\infty} \frac{(1-t^3q^{6n})}{(1-tq^{2n-1})(1-tq^{2n})(1-t^2q^{4n})(1-t^2)} \nonumber \\
	&=\frac{(t^3q^6; q^6)_\infty}{(tq; q)_\infty(t^2; q^4)_\infty} \nonumber \\
	&= \left( \sum_{i \geq 0}\frac{t^iq^i}{(q; q)_i} \right) \left( \sum_{j \geq 0}\frac{t^{2j}}{(q^4; q^4)_j} \right)  \left( \sum_{k \geq 0}\frac{(-1)^kt^{3k}q^{6k+3k(k-1)}}{(q^6; q^6)_k} \right) \nonumber \\
	&=\sum_{i, j, k \geq 0} \frac{(-1)^kt^{i+2j+3k}q^{i+3k^2+3k}}{(q; q)_i(q^4; q^4)_j(q^6; q^6)_k}.
\end{align}

\noindent For the third identity, we use the identities (\ref{Euler 1}) and (\ref{Euler 2}). The remaining work is adding the weight of the base partition to the exponent of $q$ in (\ref{without weight of base partition-2}).  Consider the base partition $\beta=1+3+5+\ldots+ (2m-1)$. The partition $\beta$ has $m$ parts and its weight is $m^2$. Let $m=i+2j+3k$. Then the weight of the base partition $\beta$ is $(i+2j+3k)^2$. Therefore, the generating function of partitions satisfying the conditions $(a)$, $(b)$, $(c)$ and $(d')$ is
\begin{align*} 
	\sum_{m,n \geq 0}kr_2(n,m)q^nt^m &=\sum_{i, j, k \geq 0} \frac{(-1)^kt^{i+2j+3k}q^{i+3k^2+3k}\color{red}{q^{(i+2j+3k)^2}}}{(q; q)_i(q^4; q^4)_j(q^6; q^6)_k} \nonumber \\
	&=\sum_{i, j, k \geq 0}(-1)^k \frac{t^{i+2j+3k}q^{(i+2j+3k)(i+2j+3k-1)+2i+2j+3k^2+6k}}{(q;q)_i(q^4;q^4)_j(q^6;q^6)_k}.
\end{align*}

\noindent The latter generating function is exactly the sum (3.4) which is constructed by Kanade and Russell in \cite{sk-mcr-staircases} (Where they used $x$ for $t$).
\qed

\begin{ex} \normalfont
	Let $\lambda = 2+2+6+12+12+16+18+24+24$. In this case, the almost-seed partition is $\overline{\lambda}= 1+3+6+11+13+16+18+23+25$ and we use the base partition $\beta=1+3+5+7+9+11+13+15+17$. We have $\mu=0+0+1+4+4+5+5+8+8$. Observe that $\mu$ has two non-zero even parts that appear an even number of times, namely $4$ and $8$. Therefore, we can generate $2^2=4$ partitions from the almost-seed partition $\overline{\lambda}$ counted by $kr_2(n, m)$, where $n=116$ and $m=9$. Let us list all of these partitions:
	
	\begin{center}
		$\color{red}{2+2}$ $+$ $6+11+13+16+18+23+25$\\
		$\color{red}{2+2}$ $+$ $6$ $+$ $\color{green}{12+12}$ $+$ $16+18+23+25$\\
		$\color{red}{2+2}$ $+$ $6+11+13+16+18+\color{blue}{24+24}$\\
		$\color{red}{2+2}$ $+$ $6$ $+$ $\color{green}{12+12}$ $+$ $16+18+\color{blue}{24+24}$.
	\end{center}
\end{ex}

\begin{theorem} \cite{sk-mcr-staircases} \label{1, 2, 3 not allowed}
	Consider the partitions satisfying the following conditions:
	
	\begin{enumerate} 
		\item [(a)]  No consecutive parts allowed. 
		\item [(b)] Odd parts do not repeat.
		\item [(c)] For a contiguous sub-partition $\lambda_i+\lambda_{i+1}+\lambda_{i+2}$, we have $| \lambda_i-\lambda_{i+2} | \geq 4 $ if $\lambda_{i+1}$ is even and appears more than once.
		\item [($d''$)]$1, 2$ and $3$ are not allowed to appear as parts.
	\end{enumerate}
	
	\noindent For $n, m \in \mathbb{N}$, let $kr_3(n, m)$ denote the number of partitions of n into m parts such that the partitions satisfy the conditions (a), (b), (c) and (d''). Then,
	\begin{equation} \label{third generating function}
		\sum_{m,n \geq 0}kr_3(n,m)q^nt^m= \sum_{i, j, k \geq 0}(-1)^k \frac{t^{i+2j+3k}q^{(i+2j+3k)(i+2j+3k-1)+4i+6j+3k^2+12k}}{(q;q)_i(q^4;q^4)_j(q^6;q^6)_k}.
	\end{equation} 
\end{theorem}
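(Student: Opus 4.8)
The plan is to avoid redoing the seed-partition construction from scratch and instead reduce Theorem~\ref{1, 2, 3 not allowed} to the already-proved Theorem~\ref{1 not allowed} by a single uniform shift of all parts. The conditions $(a)$, $(b)$, $(c)$ are all insensitive to adding a fixed constant to every part (they constrain only differences, parities, and multiplicities), and the sole difference between $(d')$ and $(d'')$ is the smallest admissible part ($\geq 2$ versus $\geq 4$). So subtracting $2$ from every part ought to carry the $kr_3$ family bijectively onto the $kr_2$ family, and a substitution $t\mapsto q^2t$ should then produce \eqref{third generating function} directly from \eqref{second generating function}.

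First I would make the correspondence precise. Given $\lambda=\lambda_1+\cdots+\lambda_m$ counted by $kr_3(n,m)$ (all parts $\geq 4$), define $\lambda'$ by $\lambda_i'=\lambda_i-2$; conversely, given $\lambda'$ counted by $kr_2(n',m)$ (all parts $\geq 2$), set $\lambda_i=\lambda_i'+2$. These maps are mutually inverse. I would then verify that subtracting $2$ from every part preserves each difference $\lambda_{i+1}-\lambda_i$ and each $|\lambda_i-\lambda_{i+2}|$, preserves the parity of every part, and preserves the multiplicity of every value. Hence $(a)$ (no two parts differing by $1$), $(b)$ (no repeated odd part), and $(c)$ (whose triggering hypothesis "$\lambda_{i+1}$ even and repeated" and whose conclusion $|\lambda_i-\lambda_{i+2}|\geq 4$ are both invariant) transfer verbatim in both directions. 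Finally, all parts of $\lambda$ are $\geq 4$ exactly when all parts of $\lambda'$ are $\geq 2$, which is precisely the passage between $(d'')$ and $(d')$. Thus the correspondence is a genuine bijection between the two families.

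Next I would pass to generating functions. The shift removes $2$ from each of the $m$ parts, so $|\lambda|=|\lambda'|+2m$ and therefore $kr_3(n,m)=kr_2(n-2m,m)$. Summing and reindexing with $n'=n-2m$,
\begin{align*}
\sum_{m,n\geq 0}kr_3(n,m)\,q^nt^m
=\sum_{m,n\geq 0}kr_2(n-2m,m)\,q^nt^m
=\sum_{m,n'\geq 0}kr_2(n',m)\,q^{n'}(q^2t)^m ,
\end{align*}
so the left-hand side is the series of Theorem~\ref{1 not allowed} with $t$ replaced by $q^2t$. Substituting $t\mapsto q^2t$ in \eqref{second generating function} multiplies each monomial by $q^{2(i+2j+3k)}$, turning the exponent of $q$ into
\begin{align*}
(i+2j+3k)(i+2j+3k-1)+2i+2j+3k^2+6k+2(i+2j+3k),
\end{align*}
which simplifies to $(i+2j+3k)(i+2j+3k-1)+4i+6j+3k^2+12k$. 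This is exactly \eqref{third generating function}, finishing the proof.

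I expect the only real point needing care to be the shift-invariance of condition $(c)$: one must confirm that both its hypothesis and its conclusion are unchanged when every part drops by $2$, and that the boundary bookkeeping ($\geq 4$ versus $\geq 2$) matches $(d'')$ versus $(d')$ with no off-by-one slip. As a cross-check that stays inside the paper's own framework, one could instead rerun the proof of Theorem~\ref{1 not allowed} verbatim but use the base partition $3+5+\cdots+(2m+1)$, of weight $m^2+2m$, in place of $1+3+\cdots+(2m-1)$; the extra $2m$ in the base weight contributes precisely the missing $q^{2m}$ factor, so the two routes yield the same series and corroborate the computation.
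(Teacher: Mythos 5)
Your proposal is correct and follows essentially the same route as the paper's own proof: a bijection given by shifting every part by $2$ between the $kr_3$ and $kr_2$ families, followed by the substitution $t \mapsto q^2t$ in \eqref{second generating function}, with the same exponent arithmetic yielding \eqref{third generating function}. Your extra verification that conditions $(a)$, $(b)$, $(c)$ are shift-invariant is a welcome detail the paper leaves implicit, but it does not change the argument.
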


\begin{proof}
Let $\lambda$ be a partition counted by $kr_2(n, m)$. Then we can find a partition $\lambda'$ counted by $kr_3(n+2m, m)$ iterating each part in $\lambda$ by $2$. Conversely, if $\lambda'$ is a partition counted by $kr_3(n, m)$, we can find a partition $\lambda$ counted by $kr_2(n-2m, m)$ subtracting $2$ from the each part in $\lambda'$. Therefore, there is a $1$-$1$ correspondence between the partitions satisfying the conditions $(a)$, $(b)$, $(c)$, $(d')$ and the partitions satisfying the conditions $(a)$, $(b)$, $(c)$, $(d'')$. Now, by using the generating function in Theorem \ref{1 not allowed}, we can provide a generating function for the partitions satisfying the conditions $(a)$, $(b)$, $(c)$, $(d'')$. For this purpose, we let $t^m \mapsto t^mq^{2m}$ to get that the required generating function is

\begin{align*}
	\sum_{m,n \geq 0}kr_3(n,m)q^nt^m &=\sum_{i, j, k \geq 0}(-1)^k \frac{\textcolor{red}{(tq^2)^{i+2j+3k}} q^{(i+2j+3k)(i+2j+3k-1)+2i+2j+3k^2+6k}}{(q;q)_i(q^4;q^4)_j(q^6;q^6)_k} \nonumber \\
	&=\sum_{i, j, k \geq 0}(-1)^k \frac{t^{i+2j+3k}q^{2i+4j+6k} q^{(i+2j+3k)(i+2j+3k-1)+2i+2j+3k^2+6k}}{(q;q)_i(q^4;q^4)_j(q^6;q^6)_k} \nonumber \\
	&=\sum_{i, j, k \geq 0}(-1)^k \frac{t^{i+2j+3k} q^{(i+2j+3k)(i+2j+3k-1)+4i+6j+3k^2+12k}}{(q;q)_i(q^4;q^4)_j(q^6;q^6)_k}. 
\end{align*}

\noindent The latter generating function is exactly the sum (3.6) which is constructed by Kanade and Russell in \cite{sk-mcr-staircases} (Where they used $x$ for $t$).
\end{proof}

\section{Construction of Evidently Positive Series as Generating Functions}
\label{Evidently Positive Series}

The infinite product

\begin{equation} \label{H0(t;q)}
	H_0(t; q)=\prod_{n = 1}^{\infty} (1+tq^{2n}+ t^2q^{4n})
\end{equation}

which appears as the numerator of $A(t;q;2)$ plays a very important role in the construction of evidently positive series which are presented in this section. 

We try to represent $H_0(t; q)$ as a series with evidently positive coefficients. In this way, we can write an evidently positive series as the generating function of partitions in Theorem \ref{2+2 not allowed}. More precisely, assume that

\begin{equation*} 
	H_0(t; q)= \sum_{n \geq 0} h_nq^nt^n,
\end{equation*}

\noindent where $h_n \geq 0$ for all $n$. Then, we get 

\begin{align*} 
	A(t;q;2) &= \frac{H_0(t; q)}{\prod_{n = 1}^{\infty}(1-tq^{2n-1})(1-t^2q^{4n})}.\frac{1}{(1-t)}\\
	&=\left( \sum_{n \geq 0} h_nq^nt^n\right).\frac{1}{\prod_{n = 1}^{\infty}(1-tq^{2n-1})(1-t^2q^{4n})}.\frac{1}{(1-t)}\\
	&= \left( \sum_{n \geq 0} h_nq^nt^n\right). \frac{1}{(tq;q^2)_\infty(t^2q^4;q^4)_\infty}.\frac{1}{(1-t)}\\
	&= \left( \sum_{n \geq 0} h_nq^nt^n\right).\left(\sum_{i\geq 0} \frac{(tq)^i}{(q^2;q^2)_i} \right).\left( \sum_{j\geq 0}\frac{(t^2q^4)^j}{(q^4;q^4)_j}  \right). \left( \sum_{k\geq 0}t^k \right) \\
	&=\sum_{n,i,j, k \geq 0} \frac{h_nq^{n+i+4j}t^{n+i+2j+k}}{(q^2;q^2)_i(q^4;q^4)_j}.
\end{align*}

\noindent The latter sum is an evidently positive series. After adding the weight of the base partition $\beta$ to this series, we obtain an evidently positive series as the generating function of partitions in Theorem \ref{2+2 not allowed}. Moreover, we can also construct evidently positive series as the generating functions of partitions in Theorem \ref{1 not allowed} and Theorem \ref{1, 2, 3 not allowed} by using the same positive series representation of $H_0(t; q)$. Consider
\begin{equation*} 
	H(t; q)=\prod_{n= 1}^{\infty} (1+tq^{n}+ t^2q^{2n}).
\end{equation*} 

\noindent We observe that $H_0(t; q)=H(t; q^2)$. Therefore, it is sufficient to represent $H(t;q)$ as an evidently positive series. We also observe that $H(t;q)$ is the generating function of partitions with parts occurring at most twice. We now prove Theorem \ref{each part appears at most twice} which gives an evidently positive series as the generating function of partitions  with parts occurring at most twice.

\noindent \emph{\textbf{Proof of Theorem \ref{each part appears at most twice}:}} 
We will show that each partition $\lambda$ counted by $h(n, m)$ corresponds to a triple of partitions $(\beta, \mu, \theta)$ by a series of backward moves, where

\begin{itemize}
	\item $\beta$ is the base partition into $m=2n_2+n_1$ parts, has $n_2$ pairs,  $n_{11}$ immobile singletons, $n_{12}$ moveable singletons, where $n_1=n_{11}+n_{12}$ (pairs, immobile singletons and moveable singletons will be defined shortly), and minimum possible weight,
	\item $\mu$ is a partition into $n_2$ multiples of $3$ ($0$ is allowed as a part),
	\item $\theta$ is a partition into $n_1=n_{11}+n_{12}$ parts ($0$ is allowed as a part), where  $n_{11}$ will be determined once we perform all backward moves on the pairs and the number of $0$'s in $\theta$ $\geq n_{11}$.
	\item $|\lambda|=|\beta|+|\mu|+|\theta|$.
\end{itemize}

\noindent Conversely, we will obtain a unique $\lambda$ from a given triple of partitions $(\beta, \mu, \theta)$ by a series of forward moves. We show that the sequences of backward and forward moves are inverses of each other. Therefore, we will obtain 

\begin{equation*} 
	\sum_{m, n \geq 0}h(n, m)q^nt^m=\sum_{\substack{n_1, n_2 \geq 0 \\ n_1=n_{11}+n_{12} \\ \beta, \mu, \theta}}q^{| \beta | + | \mu |+ | \theta |}t^{2n_2+n_1}
\end{equation*}

\noindent We now define pairs, singletons, the backward moves on pairs and singletons. Let $\lambda$ be a partition counted by $h(n, m)$. The \textit{pairs } in $\lambda$ are defined as a pair of two repeating parts or a pair of two consecutive parts. We pair the leftmost parts first and continue recursively for yet unbound parts. We show the pairs in brackets for convenience. Once we determine all pairs, the remaining parts will be defined as \textit{singletons}. The singletons will be divided into two categories; \textit{immobile singletons} and \textit{moveable singletons}. It is not possible to distinguish a singleton as an immobile or a moveable singleton in the first place. Once we perform all possible backward moves on the pairs (the backward moves on the pairs will be defined shortly), the singletons which exist between the pairs will be immobile singletons and no backward or forward moves will be possible on those singletons. The singletons which appear just after a streak containing pairs and immobile singletons will be moveable singletons. The details will be explained in due course. A pair will not be moved through other pairs in the backward or forward direction, but it can be moved through singletons. We indicate the pair or singleton being moved in boldface, here and elsewhere. The backward moves on pairs are defined as follows:

\vspace{3mm}
\noindent \underline{\textbf{case Ia}}
\begin{center}
	(parts $\leq k-3$), $\mathbf{[k, k+1]}$, (parts $\geq k+1$)
\end{center}
\begin{center}
	\quad \quad \quad \quad \quad \ $\Big\downarrow \text{one backward move}$
\end{center}
\begin{center}
	(parts $\leq k-3$), $\mathbf{[k-1, k-1]}$, (parts $\geq k+1$)
\end{center}

\vspace{3mm}
\noindent \underline{\textbf{case Ib}}
\begin{center}
	(parts $\leq k-4$), $k-2$, $\mathbf{[k, k+1]}$, (parts $\geq k+1$)
\end{center}
\begin{center}
	\quad \quad \quad \quad \quad \ $\Big\downarrow \text{one backward move}$
\end{center}
\begin{center}
	(parts $\leq k-4$), $k-2$, $\mathbf{[k-1, k-1]}$, (parts $\geq k+1$)
\end{center}
\begin{center}
	\quad \quad \quad \quad \quad \ $\Big\downarrow \text{regrouping of pairs}$
\end{center}
\begin{center}
	(parts $\leq k-4$), $\mathbf{[k-2, k-1]}$, $k-1$, (parts $\geq k+1$)
\end{center}

\vspace{3mm}
\noindent \underline{\textbf{case IIa}}

\begin{center}
	(parts $\leq k-4$), $\mathbf{[k, k]}$, (parts $\geq k+1$)
\end{center}
\begin{center}
	\quad \quad \quad \quad \quad \ $\Big\downarrow \text{one backward move}$
\end{center}
\begin{center}
	(parts $\leq k-4$), $\mathbf{[k-2, k-1]}$, (parts $\geq k+1$)
\end{center}

\vspace{3mm}
\noindent \underline{\textbf{case IIb}}
\begin{center}
	(parts $\leq k-4$), $k-2$, $\mathbf{[k, k]}$, (parts $\geq k+1$)
\end{center}
\begin{center}
	\quad \quad \quad \quad \quad \ $\Big\downarrow \text{one backward move}$
\end{center}
\begin{center}
	(parts $\leq k-4$), $k-2$, $\mathbf{[k-2, k-1]}$, (parts $\geq k+1$)
\end{center}
\begin{center}
	\quad \quad \quad \quad \quad \ $\Big\downarrow \text{regrouping of pairs}$
\end{center}
\begin{center}
	(parts $\leq k-4$), $\mathbf{[k-2, k-2]}$, $k-1$, (parts $\geq k+1$)
\end{center}

\vspace{3mm}
\noindent \underline{\textbf{case IIc}}
\begin{center}
	(parts $\leq k-5$), $k-3$, $\mathbf{[k, k]}$, (parts $\geq k+1$)
\end{center}
\begin{center}
	\quad \quad \quad \quad \quad \ $\Big\downarrow \text{one backward move}$
\end{center}
\begin{center}
	(parts $\leq k-5$), $k-3$, $\mathbf{[k-2, k-1]}$, (parts $\geq k+1$)
\end{center}
\begin{center}
	\quad \quad \quad \quad \quad \ $\Big\downarrow \text{regrouping of pairs}$
\end{center}
\begin{center}
	(parts $\leq k-5$), $\mathbf{[k-3, k-2]}$, $k-1$, (parts $\geq k+1$)
\end{center}

\vspace{3mm}
\noindent \underline{\textbf{case IIIa}}

\begin{center}
	$\underbrace{...............................,[k-3, k-2]}_{\substack{\text{a streak of pairs which can not be}\\ \text{moved further back}  }}$, $k-2$, $\mathbf{[k, k+1]}$, $[k+2, k+3]$, (parts $\geq k+3$)
\end{center}
\begin{center}
	\quad \quad \quad \quad \quad  $\Big\downarrow \text{one backward move}$
\end{center}
\begin{center}
	.....................,$[k-3, k-2]$, $k-2$, $\mathbf{[k-1, k-1]}$, $[k+2, k+3]$, (parts $\geq k+3$)
\end{center}

\begin{center}	
	\quad \quad \quad \quad \quad  $\Big\downarrow \text{regrouping the pairs}$
\end{center}
\begin{center}
	.....................,$[k-3, k-2]$, $\underbrace{[k-2, k-1]}_{\substack{\text{can not be moved}\\ \text{further back}}}$, $k-1$, $\mathbf{[k+2, k+3]}$, (parts $\geq k+3$)
\end{center}
\begin{center}
	\quad \quad \quad \quad \quad \quad \quad \quad \quad  $\Big\downarrow \text{one backward move}$
\end{center}
\begin{center}
	.....................,$[k-3, k-2]$, $[k-2, k-1]$, $\underbrace{k-1}_{\substack{\text{an immobile}\\ \text{singleton}}}$, $\mathbf{[k+1, k+1]}$, (parts $\geq k+3$)
\end{center}
The pair  $[k+1, k+1]$ can not be moved further back and the part $k-1$ is an immobile singleton.  If we try to move the pair $[k+1, k+1]$ back once more, we get:

\begin{center}
	.....................,$[k-3, k-2]$, $[k-2, k-1]$, $k-1$, $\mathbf{[k+1, k+1]}$, (parts $\geq k+3$)
\end{center}

\begin{center}
	\quad \quad \quad \quad \quad \quad \quad \quad \quad  $\Big\downarrow \text{one backward move}$
\end{center}

\begin{center}
	.....................,$[k-3, k-2]$, $[k-2, k-1]$, $k-1$, $\mathbf{[k-1, k]}$, (parts $\geq k+3$)
\end{center}
\noindent As we see, the part $k-1$ appears thrice in this case, but appearance of a part more than twice is not allowed.

\vspace{3mm}
\noindent \underline{\textbf{case IIIb}}

\begin{center}
	$\underbrace{...............................,[k-3, k-3]}_{\substack{\text{a streak of pairs which can not be}\\ \text{moved further back}  }}$, $k-2$, $\mathbf{[k, k+1]}$, $[k+2, k+3]$, (parts $\geq k+3$)
\end{center}
\begin{center}
	\quad \quad \quad \quad \quad  $\Big\downarrow \text{one backward move}$
\end{center}
\begin{center}
	.....................,$[k-3, k-3]$, $k-2$, $\mathbf{[k-1, k-1]}$, $[k+2, k+3]$, (parts $\geq k+3$)
\end{center}

\begin{center}	
	\quad \quad \quad \quad \quad  $\Big\downarrow \text{regrouping the pairs}$
\end{center}
\begin{center}
	.....................,$[k-3, k-3]$, $\underbrace{[k-2, k-1]}_{\substack{\text{can not be moved}\\ \text{further back}}}$, $k-1$, $\mathbf{[k+2, k+3]}$, (parts $\geq k+3$)
\end{center}
\begin{center}
	\quad \quad \quad \quad \quad \quad \quad \quad \quad  $\Big\downarrow \text{one backward move}$
\end{center}
\begin{center}
	.....................,$[k-3, k-3]$, $[k-2, k-1]$, $\underbrace{k-1}_{\substack{\text{an immobile}\\ \text{singleton}}}$, $\mathbf{[k+1, k+1]}$, (parts $\geq k+3$)
\end{center}
The pair  $[k+1, k+1]$ can not be moved further back and the part $k-1$ is an immobile singleton as in case \textbf{IIIa}.

\noindent \underline{ \textbf{case IIIc}}

\begin{center}
	$\underbrace{...............................,[k-4, k-3]}_{\substack{\text{a streak of pairs which can not be}\\ \text{moved further back}  }}$, $k-2$, $\mathbf{[k, k+1]}$, $[k+2, k+3]$, (parts $\geq k+3$)
\end{center}
\begin{center}
	\quad \quad \quad \quad \quad  $\Big\downarrow \text{one backward move}$
\end{center}
\begin{center}
	.....................,$[k-4, k-3]$, $k-2$, $\mathbf{[k-1, k-1]}$, $[k+2, k+3]$, (parts $\geq k+3$)
\end{center}

\begin{center}	
	\quad \quad \quad \quad \quad  $\Big\downarrow \text{regrouping the pairs}$
\end{center}
\begin{center}
	.....................,$[k-4, k-3]$, $\underbrace{[k-2, k-1]}_{\substack{\text{can not be moved}\\ \text{further back}}}$, $k-1$, $\mathbf{[k+2, k+3]}$, (parts $\geq k+3$)
\end{center}
\begin{center}
	\quad \quad \quad \quad \quad \quad \quad \quad \quad  $\Big\downarrow \text{one backward move}$
\end{center}
\begin{center}
	.....................,$[k-4, k-3]$, $[k-2, k-1]$, $\underbrace{k-1}_{\substack{\text{an immobile}\\ \text{singleton}}}$, $\mathbf{[k+1, k+1]}$, (parts $\geq k+3$)
\end{center}
The pair  $[k+1, k+1]$ can not be moved further back and the part $k-1$ is an immobile singleton as in the cases \textbf{IIIa} and \textbf{IIIb}.

\vspace{3mm}
\noindent In the cases \textbf{IIIa}, \textbf{IIIb }and \textbf{IIIc}, the streak of pairs at the very beginning, where the pairs are tightly packed, i.e. no backward moves are possible to perform on any of them can not be described more precisely. Because, we have two kinds of pairs; a pair of two repeating parts and a pair of two consecutive parts. Therefore, there may exist three different pairs after a fixed pair. For example, if we consider the pair $[k, k+1]$, there are three possibilities  for the pair just coming after $[k, k+1]$: $[k+1, k+2]$, $[k+2, k+2]$ and $[k+2, k+3]$. 

\vspace{3mm}
\noindent We highlight that an immobile singleton may exist between two pairs such that the first pair is a pair of consecutive parts and the second pair is a pair of repeating parts.

\vspace{3mm}
\noindent We observe that any backward move on a pair decreases the total weight by $3$. Once we determine all pairs and singletons in $\lambda$, we start with the smallest pair and we perform $\frac{\mu_1}{3}$ backward moves on it until it becomes $[1,1]$, $[1,2]$ or $[2,2]$, thus determining $\mu_1$, the smallest part of $\mu$. If the smallest pair is already $[1,1]$, $[1,2]$ or $[2,2]$, then we set $\mu_1=0$. Once the smallest pair is stowed as $[1,1]$, $[1,2]$ or $[2,2]$, we continue with the next smallest pair. We perform $\frac{\mu_2}{3}$ backward moves on the second smallest pair, thus determining $\mu_2$ and so on. When the smallest two pairs are moved back as much as possible, we may have the following partitions at the beginning of the base partition:
\FloatBarrier
\begin{table}[h]
	\begin{center}
		\begin{tabular}{ |c|c|c| } 
			\hline
			$[1,1],[2,2]$& $[1,2],2,[4,4]$ & $[2,2],[3,3]$ \\ 
			$[1,1],[2,3]$ & $[1,2],[2,3]$ & $[2,2],[3,4]$ \\ 
			$[1,1],[3,3]$  & $[1,2],[3,3]$ & $[2,2],[4,4]$ \\ 
			& $[1,2],[3,4]$ &  \\ 
			\hline
		\end{tabular}
	\end{center}
\end{table}
\FloatBarrier

\noindent We now show that any backward move on the smaller pair allows a backward move on the immediately succeeding pair. We may assume that there exist no other parts between the smaller pair and the succeeding pair. Because, if there exist some singletons between two pairs, then we can move the larger pair in the backward direction so that there exist no singletons between the pairs. Here, we also assume that there is no immobile singleton appearing between the pairs during the backward moves.  The case when an immobile singleton exists between two pairs is already considered in the cases \textbf{V} and \textbf{VI}. It is obvious that one backward move on the smaller pair allows one backward move on the larger pair in the cases\textbf{ V }and \textbf{VI}.  We now consider several cases: 

\vspace{3mm}
\noindent \underline{\textbf{case i}}
\begin{center}
	(parts $\leq k-2$), $\mathbf{[k, k+1]}$, $[k+1, k+2]$, (parts $\geq k+2$)
\end{center}
\begin{center}
	\quad \quad \quad \ $\Big\downarrow \text{one backward move}$
\end{center}
\begin{center}
	(parts $\leq k-2$), $[k-1, k-1]$, $\mathbf{[k+1, k+2]}$, (parts $\geq k+2$)
\end{center}
Here, there is a potential regrouping for determining pairs if there is a $k-2$. 
\begin{center}
	\quad \quad \quad \ $\Big\downarrow \text{one backward move}$
\end{center}
\begin{center}
	(parts $\leq k-2$), $[k-1, k-1]$, $\mathbf{[k, k]}$, (parts $\geq k+2$)
\end{center}

\noindent \underline{\textbf{case ii}}
\begin{center}
	(parts $\leq k-2$), $\mathbf{[k, k+1]}$, $[k+2, k+2]$, (parts $\geq k+3$)
\end{center}
\begin{center}
	\quad \quad \quad \ $\Big\downarrow \text{one backward move}$
\end{center}
\begin{center}
	(parts $\leq k-2$), $[k-1, k-1]$, $\mathbf{[k+2, k+2]}$, (parts $\geq k+3$)
\end{center}
Here, there is a potential regrouping for determining pairs if there is a $k-2$. 
\begin{center}
	\quad \quad \quad \ $\Big\downarrow \text{one backward move}$
\end{center}
\begin{center}
	(parts $\leq k-2$), $[k-1, k-1]$, $\mathbf{[k, k+1]}$, (parts $\geq k+3$)
\end{center}

\newpage
\noindent \underline{\textbf{case iii}}
\begin{center}
	(parts $\leq k-2$), $\mathbf{[k, k+1]}$, $[k+2, k+3]$, (parts $\geq k+3$)
\end{center}
\begin{center}
	\quad \quad \quad \ $\Big\downarrow \text{one backward move}$
\end{center}
\begin{center}
	(parts $\leq k-2$), $[k-1, k-1]$, $\mathbf{[k+2, k+3]}$, (parts $\geq k+3$)
\end{center}
Here, there is a potential regrouping for determining pairs if there is a $k-2$. 
\begin{center}
	\quad \quad \quad \ $\Big\downarrow \text{one backward move}$
\end{center}
\begin{center}
	(parts $\leq k-2$), $[k-1, k-1]$, $\mathbf{[k+1, k+1]}$, (parts $\geq k+3$)
\end{center}

\vspace{3mm}
\noindent \underline{\textbf{case iv}}
\begin{center}
	(parts $\leq k-2$), $\mathbf{[k, k]}$, $[k+1, k+1]$, (parts $\geq k+2$)
\end{center}
\begin{center}
	\quad \quad \quad \ $\Big\downarrow \text{one backward move}$
\end{center}
\begin{center}
	(parts $\leq k-2$), $[k-2, k-1]$, $\mathbf{[k+1, k+1]}$, (parts $\geq k+2$)
\end{center}
Here, there is a potential regrouping for determining pairs if there is a $k-2$ or $k-3$. 
\begin{center}
	\quad \quad \quad \ $\Big\downarrow \text{one backward move}$
\end{center}
\begin{center}
	(parts $\leq k-2$), $[k-2, k-1]$, $\mathbf{[k-1, k]}$, (parts $\geq k+2$)
\end{center}

\vspace{3mm}
\noindent \underline{\textbf{case v}}
\begin{center}
	(parts $\leq k-2$), $\mathbf{[k, k]}$, $[k+1, k+2]$, (parts $\geq k+2$)
\end{center}
\begin{center}
	\quad \quad \quad \ $\Big\downarrow \text{one backward move}$
\end{center}
\begin{center}
	(parts $\leq k-2$), $[k-2, k-1]$, $\mathbf{[k+1, k+2]}$, (parts $\geq k+2$)
\end{center}
Here, there is a potential regrouping for determining pairs if there is a $k-2$ or $k-3$. 
\begin{center}
	\quad \quad \quad \ $\Big\downarrow \text{one backward move}$
\end{center}
\begin{center}
	(parts $\leq k-2$), $[k-1, k-1]$, $\mathbf{[k, k]}$, (parts $\geq k+2$)
\end{center}

\vspace{3mm}
\noindent \underline{\textbf{case vi}}
\begin{center}
	(parts $\leq k-2$), $\mathbf{[k, k]}$, $[k+2, k+2]$, (parts $\geq k+3$)
\end{center}
\begin{center}
	\quad \quad \quad \ $\Big\downarrow \text{one backward move}$
\end{center}
\begin{center}
	(parts $\leq k-2$), $[k-2, k-1]$, $\mathbf{[k+2, k+2]}$, (parts $\geq k+3$)
\end{center}
Here, there is a potential regrouping for determining pairs if there is a $k-2$ or $k-3$. 
\begin{center}
	\quad \quad \quad \ $\Big\downarrow \text{one backward move}$
\end{center}
\begin{center}
	(parts $\leq k-2$), $[k-1, k-1]$, $\mathbf{[k, k+1]}$, (parts $\geq k+3$)
\end{center}
In all cases \textbf{i}, \textbf{ii}, \textbf{iii}, \textbf{iv}, \textbf{v} and \textbf{vi}, one backward move on the smaller pair allows a backward move on the larger pair. Thus, it is obvious that
\begin{center}
	$\mu_1 \leq \mu_2 \leq \ldots \leq \mu_{n_2}$.
\end{center}

\noindent Once we perform all possible backward moves on the pairs, we will  have pairs which are fixed in their places, i.e. they can not be moved further back and some immobile singletons may exist between the pairs.  More precisely, once we perform all possible backward moves on the pairs, the intermediate partition looks like
\begin{center}
	$\underbrace{...................................................}_{\substack{\text{$n_2-1$ pairs which can not be moved further back}\\ \text{and $n_{11}$ immobile singletons}}}$, $[k, k]$, moveable singletons $\geq k+1$
\end{center}
\begin{center}
	or
\end{center}
\begin{center}
	$\underbrace{...................................................}_{\substack{\text{$n_2-1$ pairs which can not be moved further back}\\ \text{and $n_{11}$ immobile singletons}}}$, $[k, k+1]$, moveable singletons $\geq k+1$
\end{center}

\noindent To make the $n_{12}$ moveable singletons $k+1, k+3, k+5, \ldots, k+2n_{12}-1$, we move the singletons one by one in backward direction. We subtract $\theta_{n_{11}+1}$ from the smallest moveable singleton, $\theta_{n_{11}+2}$ from the next smallest moveable singleton and so on. Since we have $n_{11}$ immobile singletons, the backward moves of those singletons will correspond to $n_{11}$ $0$'s in the partition $\theta$. It is clear that any backward move on a moveable singleton allows a backward move on the immediately succeeding moveable singleton. It is immediate that 
\begin{center}
	$\theta_1 =\theta_2= \ldots=\theta_{n_{11}}=0 \leq \theta_{n_{11}+1} \leq \theta_{n_{11}+2} \leq  \ldots \leq \theta_{n_{11}+n_{12}}=\theta_{n_1}$.
\end{center}

\noindent We have now produced a triple of partitions $(\beta, \mu, \theta)$ from the given partition $\lambda$ counted by $h(n,m)$.

\vspace{3mm}
\noindent Conversely, given a triple of partitions $(\beta, \mu, \theta)$, we will produce a unique partition $\lambda$ counted by $h(n,m)$ via forward moves. 

\vspace{3mm}
\noindent Let $(\beta, \mu, \theta)$ be a triple of partitions such that  $\beta$ is the base partition into $m=2n_2+n_1$ parts, has $n_2$ pairs, $n_{11}$ immobile singletons, $n_{12}$ moveable singletons, where $n_1=n_{11}+n_{12}$ and minimum possible weight, $\mu$ is a partition into $n_2$ multiples of $3$ ($0$ is allowed as a part), $\theta$ is a partition into $n_1=n_{11}+n_{12}$ parts ($0$ is allowed as a part), where the number of $0$'s in $\theta$ $\geq n_{11}$. We first add the $i$th largest part of $\theta$ to the $i$th largest singleton in $\beta$ for $i=1, 2, \ldots, n_{12}$ in this order. These are the forward moves on the moveable singletons.  Once we perform all forward moves on the moveable singletons, we perform $\frac{1}{3}$.(the $i$th largest part of $\mu$) forward moves on the $i$th largest pair in $\beta$ for $i=1, 2, \ldots, n_2$, in this order. As we showed for the backward moves on the pairs , one can easily show that one forward move on the larger pair allows one forward move of the preceding smaller pair. Any forward move on a pair will increase the total weight by $3$. The forward moves on the pairs are defined as follows:

\vspace{3mm}
\noindent \underline{\textbf{case I'a}}
\begin{center}
	(parts $\leq k-3$), $\mathbf{[k-1, k-1]}$, (parts $\geq k+1$)
\end{center}
\begin{center}
	\quad \quad \quad \quad \quad \ $\Big\downarrow \text{one forward move}$
\end{center}
\begin{center}
	(parts $\leq k-3$), $\mathbf{[k, k+1]}$, (parts $\geq k+1$)
\end{center}
\noindent Here, there is a potential regrouping for determining pairs if there is a $k+1$ or $k+2$.

\vspace{3mm}
\noindent \underline{\textbf{case I'b}}

\begin{center}
	(parts $\leq k-4$), $\mathbf{[k-2, k-1]}$, $k-1$, (parts $\geq k+1$)
\end{center}
\begin{center}
	\quad \quad \quad \quad \quad \ $\Big\downarrow \text{regrouping of pairs}$
\end{center}
\begin{center}
	(parts $\leq k-4$), $k-2$, $\mathbf{[k-1, k-1]}$,  (parts $\geq k+1$)
\end{center}
\begin{center}
	\quad \quad \quad \quad \quad \ $\Big\downarrow \text{one forward move}$
\end{center}
\begin{center}
	(parts $\leq k-4$), $k-2$, $\mathbf{[k, k+1]}$,  (parts $\geq k+1$)
\end{center}
\noindent Here, there is a potential regrouping for determining pairs if there is a $k+1$ or $k+2$.

\vspace{3mm}
\noindent \underline{\textbf{case II'a}}
\begin{center}
	(parts $\leq k-4$), $\mathbf{[k-2, k-1]}$, (parts $\geq k+1$)
\end{center}
\begin{center}
	\quad \quad \quad \quad \quad \ $\Big\downarrow \text{one forward move}$
\end{center}
\begin{center}
	(parts $\leq k-4$), $\mathbf{[k, k]}$, (parts $\geq k+1$)
\end{center}
\noindent Here again, there is a potential regrouping for determining pairs if there is a $k+1$.

\vspace{3mm}
\noindent \underline{\textbf{case II'b}}

\begin{center}
	(parts $\leq k-4$), $\mathbf{[k-2, k-2]}$, $k-1$, (parts $\geq k+1$)
\end{center}
\begin{center}
	\quad \quad \quad \quad \quad \ $\Big\downarrow \text{regrouping of pairs}$
\end{center}
\begin{center}
	(parts $\leq k-4$), $k-2$, $\mathbf{[k-2, k-1]}$, (parts $\geq k+1$)
\end{center}
\begin{center}
	\quad \quad \quad \quad \quad \ $\Big\downarrow \text{one forward move}$
\end{center}
\begin{center}
	(parts $\leq k-4$), $k-2$, $\mathbf{[k, k]}$, (parts $\geq k+1$)
\end{center}

\noindent Here again, there is a potential regrouping for determining pairs if there is a $k+1$.

\vspace{3mm}
\noindent \underline{\textbf{case II'c}}

\begin{center}
	(parts $\leq k-5$), $\mathbf{[k-3, k-2]}$, $k-1$, (parts $\geq k+1$)
\end{center}
\begin{center}
	\quad \quad \quad \quad \quad \ $\Big\downarrow \text{regrouping of pairs}$
\end{center}
\begin{center}
	(parts $\leq k-5$), $k-3$, $\mathbf{[k-2, k-1]}$, (parts $\geq k+1$)
\end{center}
\begin{center}
	\quad \quad \quad \quad \quad \ $\Big\downarrow \text{one forward move}$
\end{center}

\begin{center}
	(parts $\leq k-5$), $k-3$, $\mathbf{[k, k]}$, (parts $\geq k+1$)
\end{center}
\noindent Here again, there is a potential regrouping for determining pairs if there is a $k+1$.

\vspace{3mm}
\noindent \underline{\textbf{case III'a}}

\begin{center}
	$\underbrace{.....................,[k-3, k-2], [k-2, k-1]}_{\substack{\text{a streak of pairs which are}\\ \text{tightly packed}}}$, $k-1$, $\mathbf{[k+1, k+1]}$, (parts $\geq k+3$)
\end{center}
\begin{center}
	\quad \quad \quad \quad \quad \quad \quad \quad \quad  $\Big\downarrow \text{one forward move}$
\end{center}
\begin{center}
	$.....................,[k-3, k-2], [k-2, k-1]$, $k-1$, $\mathbf{[k+2, k+3]}$, (parts $\geq k+3$)
\end{center}

\noindent Here, there is a potential regrouping of the pairs if there is a $k+3$ or $k+4$. We now perform a forward move on the preceding smaller  pair. We first regroup the pairs and we perform a forward move on the new pair:

\begin{center}
	$.....................,[k-3, k-2]$, $[k-2,$ $\underbrace{k-1],k-1}$, $[k+2, k+3]$, (parts $\geq k+3$)
\end{center}

\begin{center}
	\quad \quad \quad \quad  $\Big\downarrow \text{regrouping the pairs}$
\end{center}
\begin{center}
	$.....................,[k-3, k-2]$, $k-2$,  $\mathbf{[k-1,k-1]}$, $[k+2, k+3]$, (parts $\geq k+3$)
\end{center}
\begin{center}
	\quad \quad \quad \quad  $\Big\downarrow \text{one forward move}$
\end{center}
\begin{center}
	$.....................,[k-3, k-2]$, $k-2$,  $\mathbf{[k,k+1]}$, $[k+2, k+3]$, (parts $\geq k+3$)
\end{center}

\noindent \underline{\textbf{case III'b}}

\begin{center}
	$\underbrace{.....................,[k-3, k-3], [k-2, k-1]}_{\substack{\text{a streak of pairs which are}\\ \text{tightly packed}}}$, $k-1$, $\mathbf{[k+1, k+1]}$, (parts $\geq k+3$)
\end{center}
\begin{center}
	\quad \quad \quad \quad \quad \quad \quad \quad \quad  $\Big\downarrow \text{one forward move}$
\end{center}
\begin{center}
	$.....................,[k-3, k-3], [k-2, k-1]$, $k-1$, $\mathbf{[k+2, k+3]}$, (parts $\geq k+3$)
\end{center}

\noindent Here, there is a potential regrouping of the pairs if there is a $k+3$ or $k+4$. We now perform a forward move on the preceding smaller  pair. We first regroup the pairs and we perform a forward move on the new pair:

\begin{center}
	$.....................,[k-3, k-3]$, $[k-2,$ $\underbrace{k-1],k-1}$, $[k+2, k+3]$, (parts $\geq k+3$)
\end{center}

\begin{center}
	\quad \quad \quad \quad  $\Big\downarrow \text{regrouping the pairs}$
\end{center}
\begin{center}
	$.....................,[k-3, k-3]$, $k-2$,  $\mathbf{[k-1,k-1]}$, $[k+2, k+3]$, (parts $\geq k+3$)
\end{center}
\begin{center}
	\quad \quad \quad \quad  $\Big\downarrow \text{one forward move}$
\end{center}
\begin{center}
	$.....................,[k-3, k-3]$, $k-2$,  $\mathbf{[k,k+1]}$, $[k+2, k+3]$, (parts $\geq k+3$)
\end{center}

\noindent \underline{\textbf{case III'c}}

\begin{center}
	$\underbrace{.....................,[k-4, k-3], [k-2, k-1]}_{\substack{\text{a streak of pairs which are}\\ \text{tightly packed}}}$, $k-1$, $\mathbf{[k+1, k+1]}$, (parts $\geq k+3$)
\end{center}
\begin{center}
	\quad \quad \quad \quad \quad \quad \quad \quad \quad  $\Big\downarrow \text{one forward move}$
\end{center}
\begin{center}
	$.....................,[k-4, k-3], [k-2, k-1]$, $k-1$, $\mathbf{[k+2, k+3]}$, (parts $\geq k+3$)
\end{center}

\noindent Here, there is a potential regrouping of the pairs if there is a $k+3$ or $k+4$. We now perform a forward move on the preceding smaller  pair. We first regroup the pairs and we perform a forward move on the new pair:

\begin{center}
	$.....................,[k-4, k-3]$, $[k-2,$ $\underbrace{k-1],k-1}$, $[k+2, k+3]$, (parts $\geq k+3$)
\end{center}

\begin{center}
	\quad \quad \quad \quad  $\Big\downarrow \text{regrouping the pairs}$
\end{center}
\begin{center}
	$.....................,[k-4, k-3]$, $k-2$,  $\mathbf{[k-1,k-1]}$, $[k+2, k+3]$, (parts $\geq k+3$)
\end{center}
\begin{center}
	\quad \quad \quad \quad  $\Big\downarrow \text{one forward move}$
\end{center}
\begin{center}
	$.....................,[k-4, k-3]$, $k-2$,  $\mathbf{[k,k+1]}$, $[k+2, k+3]$, (parts $\geq k+3$)
\end{center}

\noindent We notice that the corresponding cases for the backward and forward moves have switched inputs and outputs and we perform the backward moves and the forward moves in the exact reverse order. Therefore, $\lambda$'s enumerated by $h(n, m)$ are in $1$-$1$ correspondence with the triples $(\beta, \mu, \theta)$. 

\vspace{3mm}
\noindent It follows that
\begin{equation*} 
	\sum_{m, n \geq 0}h(n, m)q^nt^m=\sum_{\substack{n_1, n_2 \geq 0 \\ n_1=n_{11}+n_{12} \\ \beta, \mu, \theta}}q^{| \beta | +| \mu |+| \theta |}t^{2n_2+n_1}=\sum_{\substack{n_1, n_2 \geq 0 \\ n_1=n_{11}+n_{12} \\ \beta}}\frac{q^{| \beta |}t^{2n_2+n_1}}{(q; q)_{n_{12}}(q^3; q^3)_{n_2}}.
\end{equation*}
\qed

\begin{ex} \normalfont
	Let $\lambda=1, 4, 4, 5, 6, 6, 9, 10 ,11, 12, 12, 14$. We first determine the pairs and the singletons:
	\begin{center}
		$\lambda=1, [4,4], [5,6], 6, [9, 10], [11, 12], 12, 14$.
	\end{center}
	
	\noindent We have $n_2=4$ pairs and $n_1=4$ singletons. We start to perform backward moves on the pairs, and once all possible backward moves are performed on the pairs, we continue with the backward moves on the moveable singletons.
	\begin{center} 
		$\lambda=1, \mathbf{[4, 4]}, [5, 6], 6, [9, 10], [11, 12], 12, 14$
	\end{center}
	\begin{center}
		$\Big\downarrow \text{one backward move}$
	\end{center}
	\begin{center} 
		$1, \mathbf{[2, 3]}, [5, 6], 6, [9, 10], [11, 12], 12, 14$
	\end{center}
	\begin{center}
		\quad \quad \quad	$\Big\downarrow \text{regrouping the pairs}$
	\end{center}
	\begin{center} 
		$[1, 2], 3, \mathbf{[5, 6]}, 6, [9, 10], [11, 12], 12, 14$
	\end{center}
	\begin{center}
		$\Big\downarrow \text{one backward move}$
	\end{center}
	\begin{center} 
		$[1, 2], 3, \mathbf{[4, 4]}, 6, [9, 10], [11, 12], 12, 14$
	\end{center}
	\begin{center}
		\quad \quad \quad	$\Big\downarrow \text{regrouping the pairs}$
	\end{center}
	\begin{center} 
		$[1, 2], [3, 4], 4, 6, \mathbf{[9, 10]}, [11, 12], 12, 14$
	\end{center}
	\begin{center}
		$\Big\downarrow \text{one backward move}$
	\end{center}
	\begin{center} 
		$[1, 2], [3, 4], 4, 6, \mathbf{[8, 8]}, [11, 12], 12, 14$
	\end{center}
	\begin{center}
		$\Big\downarrow \text{one backward move}$
	\end{center}
	\begin{center} 
		$[1, 2], [3, 4], 4, 6, \mathbf{[6, 7]}, [11, 12], 12, 14$
	\end{center}
	\begin{center}
		\quad \quad \quad	$\Big\downarrow \text{regrouping the pairs}$
	\end{center}
	\begin{center} 
		$[1, 2], [3, 4], 4, [6, 6], 7, \mathbf{[11, 12]}, 12, 14$
	\end{center}
	\noindent We observe that the pair $[6, 6]$ can not be moved further back and $4$ is an immobile singleton. 
	\begin{center}
		$\Big\downarrow \text{one backward move}$
	\end{center}
	\begin{center} 
		$[1, 2], [3, 4], 4, [6, 6], 7, \mathbf{[10, 10]}, 12, 14$
	\end{center}
	\begin{center}
		$\Big\downarrow \text{one backward move}$
	\end{center}
	\begin{center} 
		$[1, 2], [3, 4], 4, [6, 6], 7, \mathbf{[8, 9]}, 12, 14$
	\end{center}
	\begin{center}
		\quad \quad \quad	$\Big\downarrow \text{regrouping the pairs}$
	\end{center}
	\begin{center} 
		$[1, 2], [3, 4], 4, [6, 6], [7, 8], \mathbf{9}, 12, 14$
	\end{center}
	\begin{center}
		$\Big\downarrow \text{one backward move}$
	\end{center}
	\begin{center} 
		$[1, 2], [3, 4], 4, [6, 6], [7, 8], 8, \mathbf{12}, 14$
	\end{center}
	\begin{center}
		$\Big\downarrow \text{two backward moves}$
	\end{center}
	\begin{center} 
		$[1, 2], [3, 4], 4, [6, 6], [7, 8], 8, 10, \mathbf{14}$
	\end{center}
	\begin{center}
		$\Big\downarrow \text{two backward moves}$
	\end{center}
	\begin{center} 
		$\beta=[1, 2], [3, 4], 4, [6, 6], [7, 8], 8, 10, 12$
	\end{center}
	
	\noindent We have $\mu=3+3+6+6$ and $\theta=0+1+2+2$. We observe that
	\begin{center}
		$|\lambda|=94=|\beta|+|\mu|+|\theta|=71+18+5$.
	\end{center}
\end{ex}

\begin{ex} \normalfont
	Let $\beta= [2, 2], [3, 4], 4, [6, 6], [7, 8], 8, [10, 10], 11, 13, 15$ be a base partition with 5 pairs, 2 immobile singletons and 3 moveable singletons, $\mu=3+3+3+6+6$ and $\theta=0+0+2+3+5$. 
	After incorporating parts of $\theta$ as forward moves on the moveable singletons, the intermediate partition becomes
	\begin{center}
		$[2, 2], [3, 4], 4, [6, 6], [7, 8], 8, [10, 10], 13, 16, 20$.
	\end{center}
	We perform $\frac{\mu_5}{3}=2$ forward moves on the largest pair $[10, 10]$:
	\begin{center}
		$[2, 2], [3, 4], 4, [6, 6], [7, 8], 8, \mathbf{[10, 10]}, 13, 16, 20$
	\end{center}
	\begin{center}
		$\Big\downarrow \text{one forward move}$
	\end{center}
	\begin{center}
		$[2, 2], [3, 4], 4, [6, 6], [7, 8], 8, \mathbf{[11, 12]}, 13, 16, 20$
	\end{center}
	\begin{center}
		$\Big\downarrow \text{regrouping the pairs}$
	\end{center}
	\begin{center}
		$[2, 2], [3, 4], 4, [6, 6], [7, 8], 8, 11, \mathbf{[12, 13]}, 16, 20$
	\end{center}
	\begin{center}
		$\Big\downarrow \text{one forward move}$
	\end{center}
	\begin{center}
		$[2, 2], [3, 4], 4, [6, 6], \mathbf{[7, 8]}, 8, 11, [14, 14], 16, 20$
	\end{center}
	We now perform $\frac{\mu_4}{3}=2$ forward moves on the second largest pair. We first need to regroup the pairs. 
	\begin{center}
		$\Big\downarrow \text{regrouping of pairs}$
	\end{center}
	\begin{center}
		$[2, 2], [3, 4], 4, [6, 6], 7, \mathbf{[8, 8]}, 11, [14, 14], 16, 20$
	\end{center}
	\begin{center}
		$\Big\downarrow \text{one forward move}$
	\end{center}
	\begin{center}
		$[2, 2], [3, 4], 4, [6, 6], 7, \mathbf{[9, 10]}, 11, [14, 14], 16, 20$
	\end{center}
	\begin{center}
		$\Big\downarrow \text{regrouping of pairs}$
	\end{center}
	\begin{center}
		$[2, 2], [3, 4], 4, [6, 6], 7, 9, \mathbf{[10, 11]}, [14, 14], 16, 20$
	\end{center}
	\begin{center}
		$\Big\downarrow \text{one forward move}$
	\end{center}
	\begin{center}
		$[2, 2], [3, 4], 4, \mathbf{[6, 6]}, 7, 9, [12, 12], [14, 14], 16, 20$
	\end{center}
	We now perform $\frac{\mu_3}{3}=1$ forward move on the third largest pair. We first need to regroup the pairs. 
	\begin{center}
		$\Big\downarrow \text{regrouping of pairs}$
	\end{center}
	\begin{center}
		$[2, 2], [3, 4], 4, 6, \mathbf{[6, 7]}, 9, [12, 12], [14, 14], 16, 20$
	\end{center}
	\begin{center}
		$\Big\downarrow \text{one forward move}$
	\end{center}
	\begin{center}
		$[2, 2], \mathbf{[3, 4]}, 4, 6, [8, 8], 9, [12, 12], [14, 14], 16, 20$
	\end{center}
	We now perform $\frac{\mu_2}{3}=1$ forward move on the fourth largest pair. We first need to regroup the pairs. 
	\begin{center}
		$\Big\downarrow \text{regrouping of pairs}$
	\end{center}
	\begin{center}
		$[2, 2], 3, \mathbf{[4, 4]}, 6, [8, 8], 9, [12, 12], [14, 14], 16, 20$
	\end{center}
	\begin{center}
		$\Big\downarrow \text{one forward move}$
	\end{center}
	\begin{center}
		$\mathbf{[2, 2]}, 3, [5, 6], 6, [8, 8], 9, [12, 12], [14, 14], 16, 20$
	\end{center}
	We now perform $\frac{\mu_1}{3}=1$ forward move on the smallest pair. We first need to regroup the pairs. 
	\begin{center}
		$\Big\downarrow \text{regrouping of pairs}$
	\end{center}
	\begin{center}
		$2, \mathbf{[2, 3]}, [5, 6], 6, [8, 8], 9, [12, 12], [14, 14], 16, 20$
	\end{center}
	\begin{center}
		$\Big\downarrow \text{one forward move}$
	\end{center}
	\begin{center}
		$2, \mathbf{[4, 4]}, [5, 6], 6, [8, 8], 9, [12, 12], [14, 14], 16, 20$
	\end{center}
	This final partition is $\lambda$. Its weight is $140=|\lambda|=|\beta|+|\mu|+|\theta|=109+21+10$ indeed. 
\end{ex}

\noindent The only shortcoming in the  generating function (\ref{each part appears at most twice-generating function}) is that $|\beta|$, the weight of the base partition $\beta$,  is not given explicitly. To give an explicit formula for $|\beta|$ with respect to the number of pairs and the number of singletons has not been possible. Because, for a fixed number of pairs and singletons, there exist different base partitions. But, we will be able to construct some polynomials as generating functions of the base partitions with a given number of pairs and singletons. For this purpose, we prove Lemma \ref{polynomials}.

\noindent \emph{\textbf{Proof of Lemma \ref{polynomials}:}} We first prove the equation (\ref{P=P0+P1}). Let $\beta$ be a base partition counted by $P(m_1,m_2, m_3, m+1; q)$. Then,  it has one of the following forms:
\begin{equation*} \label{[m,m]}
	\underbrace{..........................................}_{\substack{\text{$m_1-1$ pairs of two repeating parts}\\ \text{$m_2$ pairs of two consecutive parts} \\ \text{$m_3$ blocks}}}, [m,m], \underbrace{\color{blue}m+1, m+3, \ldots, m+2n_{12}-1}_{\substack{\text{$n_{12}$ moveable singletons}}}   \tag{$\ast$}
\end{equation*}
\begin{center}
	or
\end{center}
\begin{equation*}\label{[m,m+1]}
	\underbrace{..........................................}_{\substack{\text{$m_1$ pairs of two repeating parts}\\ \text{$m_2-1$ pairs of two consecutive parts} \\ \text{$m_3$ blocks}}}, [m,m+1], \underbrace{\color{blue}m+1, m+3, \ldots,m+2n_{12}-1}_{\substack{\text{$n_{12}$ moveable singletons}}}    \tag{$\ast\ast$}
\end{equation*}

\noindent The partition in (\ref{[m,m]}) is a base partition counted by $P_0(m_1,m_2, m_3, m+1; q)$ and the partition in (\ref{[m,m+1]}) is a base partition counted by $P_1(m_1,m_2, m_3, m+1; q)$. It is clear that there is no base partition which is counted by both $P_0(m_1,m_2, m_3, m+1; q)$ and $P_1(m_1,m_2, m_3, m+1; q)$. Hence, (\ref{P=P0+P1}) follows.

\vspace{3mm}
\noindent We now prove (\ref{functional equation of P0}). Let $\beta$ be a base partition counted by $P_0(m_1,m_2, m_3,m+1; q)$. $\beta$ has the form in (\ref{[m,m]}). The largest pair $[m, m]$ may be contained in a block or not. We consider both cases separately. If the largest pair $[m, m]$ is not contained in a block and we delete it from $\beta$, the remaining partition may have one of the following forms:
\begin{equation*} \label{case a}
	\underbrace{..........................................}_{\substack{\text{$m_1-2$ pairs of two repeating parts}\\ \text{$m_2$ pairs of two consecutive parts} \\ \text{$m_3$ blocks}}}, [m-1,m-1], \underbrace{\color{blue}m, m+2, \ldots, m+2n_{12}-2}_{\substack{\text{$n_{12}$ moveable singletons}}}    \tag{a}
\end{equation*}
\begin{equation*} \label{case b}
	\underbrace{..........................................}_{\substack{\text{$m_1-1$ pairs of two repeating parts}\\ \text{$m_2-1$ pairs of two consecutive parts} \\ \text{$m_3$ blocks}}}, [m-2,m-1], \underbrace{\color{blue}m-1, m+1, \ldots, m+2n_{12}-3}_{\substack{\text{$n_{12}$ moveable singletons}}}    \tag{b}
\end{equation*}
\begin{equation*}\label{case c}
	\underbrace{..........................................}_{\substack{\text{$m_1-2$ pairs of two repeating parts}\\ \text{$m_2$ pairs of two consecutive parts} \\ \text{$m_3$ blocks}}}, [m-2,m-2], \underbrace{\color{blue}m-1, m+1, \ldots, m+2n_{12}-3}_{\substack{\text{$n_{12}$ moveable singletons}}}    \tag{c}
\end{equation*}

\noindent If the largest pair $[m, m]$ is contained in a block, then we have:
\begin{equation*} \label{[m,m] contained in a block}
	\underbrace{................................}_{\substack{\text{$m_1$ pairs of }\\ \text{two repeating parts}\\ \text{$m_2$ pairs of } \\ \text{two consecutive parts}\\\text{$m_3-1$ blocks}}}, \underbrace{[m-3, m-2], m-2, [m,m]}_{\substack{\text{a block}}}, \underbrace{\color{blue}m+1,  \ldots,m+2n_{12}-1}_{\substack{\text{$n_{12}$ moveable singletons}}} \tag{$\ast\ast\ast$}
\end{equation*}

\noindent If we delete the block $[m-3, m-2], m-2, [m,m]$ in (\ref{[m,m] contained in a block}), the remaining partition may have one of the following forms:
\begin{equation*} \label{case a'}
	\underbrace{..........................................}_{\substack{\text{$m_1$ pairs of two repeating parts}\\ \text{$m_2-1$ pairs of two consecutive parts} \\ \text{$m_3-1$ blocks}}}, [m-4,m-3], \underbrace{\color{blue}m-3, m-1, \ldots, m+2n_{12}-5}_{\substack{\text{$n_{12}$ moveable singletons}}}    \tag{a'}
\end{equation*}

\begin{equation*} \label{case b'}
	\underbrace{..........................................}_{\substack{\text{$m_1-1$ pairs of two repeating parts}\\ \text{$m_2$ pairs of two consecutive parts} \\ \text{$m_3-1$ blocks}}}, [m-4,m-4], \underbrace{\color{blue}m-3, m-1,  m+2n_{12}-5}_{\substack{\text{$n_{12}$ moveable singletons}}}  \tag{b'}
\end{equation*}

\begin{equation*} \label{case c'}
	\underbrace{..........................................}_{\substack{\text{$m_1$ pairs of two repeating parts}\\ \text{$m_2-1$ pairs of two consecutive parts} \\ \text{$m_3-1$ blocks}}}, [m-5,m-4], \underbrace{\color{blue}m-4, m-2, \ldots, m+2n_{12}-6}_{\substack{\text{$n_{12}$ moveable singletons}}}    \tag{c'}
\end{equation*}

\noindent The base partitions in (\ref{case a}), (\ref{case b}), (\ref{case c}), (\ref{case a'}), (\ref{case b'}) and (\ref{case c'}) are  counted by $P_0(m_1-1, m_2, m_3, \\ m; q)$, $P_1(m_1-1, m_2, m_3, m-1; q)$, $P_0(m_1-1, m_2, m_3, m-1; q)$, $P_1(m_1, m_2, m_3-1, m-3; q)$, $P_0(m_1, m_2, m_3-1,m-3; q)$ and $P_1(m_1, m_2, m_3-1, m-4; q)$, respectively. Therefore, we have:
\begin{align*} 
	P_0(m_1,m_2, m_3, m+1; q)= &\underbrace{q^{2m}}_{\substack{\text{for the deleted } \\ \text{pair $[m, m]$}}}\Big[P_0(m_{1}-1,m_2, m_3, m; q) \nonumber \\&+P_1(m_{1}-1,m_2, m_3, m-1; q) \nonumber
	\\&+P_0(m_{1}-1,m_2, m_3, m-1; q)\Big] \nonumber
	\\&+\underbrace{q^{5m-7}}_{\substack{\text{for the deleted block} \\ \text{ $[m-3, m-2], m-2, [m, m] $}}}\Big[P_1(m_1,m_2, m_3-1, m-3; q) \nonumber 
	\\&+P_0(m_1,m_2, m_3-1, m-3; q) \nonumber
	\\&+P_1(m_1,m_2, m_3-1, m-4; q)\Big]
\end{align*}

\noindent We now prove (\ref{functional equation of P1}). Let $\beta$ be a base partition counted by $P_1(m_1,m_2, m_3, m+1; q)$. $\beta$ has the form in (\ref{[m,m+1]}). If we delete the largest pair $[m,m+1]$, the remaining partition  may have one of the following forms:
\begin{equation*} \label{case d}
	\underbrace{..........................................}_{\substack{\text{$m_1$ pairs of two repeating parts}\\ \text{$m_2-2$ pairs of two consecutive parts} \\ \text{$m_3$ blocks}}}, [m-1,m], \underbrace{\color{blue}m, m+2, \ldots, m+2n_{12}-2}_{\substack{\text{$n_{12}$ moveable singletons}}}     \tag{d}
\end{equation*}
\begin{equation*} \label{case e}
	\underbrace{..........................................}_{\substack{\text{$m_1-1$ pairs of two repeating parts}\\ \text{$m_2-1$ pairs of two consecutive parts} \\ \text{$m_3$ blocks}}}, [m-1,m-1], \underbrace{\color{blue}m, m+2, \ldots, m+2n_{12}-2}_{\substack{\text{$n_{12}$ moveable singletons}}}    \tag{e}
\end{equation*}
\begin{equation*} \label{case f}
	\underbrace{..........................................}_{\substack{\text{$m_1$ pairs of two repeating parts}\\ \text{$m_2-2$ pairs of two consecutive parts} \\ \text{$m_3$ blocks}}}, [m-2,m-1], \underbrace{\color{blue}m-1, m+1, \ldots, m+2n_{12}-3}_{\substack{\text{$n_{12}$ moveable singletons}}}   \tag{f}
\end{equation*}

\noindent The base partitions in (\ref{case d}), (\ref{case e}) and (\ref{case f}) are counted by $P_1(m_1,m_2-1, m_3, m; q)$, $P_0(m_1,m_2-1, m_3, m; q)$ and $P_1(m_1,m_2-1, m_3, m-1; q)$, respectively. Therefore, we have:
\begin{align*} 
	P_1(m_1,m_2, m_3, m+1; q)= &\underbrace{q^{2m+1}}_{\substack{\text{for the deleted } \\ \text{pair $[m, m+1]$}}}\Big[P_1(m_1,m_2-1, m_3, m; q) \nonumber \\&+P_0(m_1,m_2-1, m_3, m; q) \nonumber
	\\&+P_1(m_1,m_2-1, m_3, m-1;q)\Big] 
\end{align*}
\noindent  We note that in the polynomials $P(m_1,m_2, m_3, m+1; q)$, we ignore the weights of the moveable singletons, but if they exist, they have the forms in the  streaks coloured blue. We recall that $P_0(m_1,m_2, m_3, m+1; q)$ is the generating function of the base partitions in which the largest pair is $[m, m]$ and $P_1(m_1,m_2, m_3,m+1; q)$ is the generating function of the base partitions in which the largest pair is $[m, m+1]$. $P_0(0, 0, 0, 1; q)$ is the generating function of the base partitions in which the largest pair is $[0, 0]$, but since we take $m_1,m_2, m_3=0$, we set $P_0(0, 0, 0, 1; q)=1$ which counts the base partition $\beta=1$ with only one moveable singleton.  
$P_1(0, 0, 0, 1; q)$ is the generating function of the base partitions in which the largest pair is $[0, 1]$, where $m_1,m_2, m_3=0$. Therefore, we set $P_1(0, 0, 0, 1; q)=0$. The other initial conditions in (\ref{initial conditions}) are obvious. 

\vspace{3mm}
\noindent The polynomials $P_0(m_1, m_2, m_3, m+1; q)$ and $P_1(m_1, m_2, m_3, m+1; q)$ satisfy the functional equations (\ref{functional equation of P0}) and  (\ref{functional equation of P1}) and the initial values of the polynomials are $0$ or $1$, (\ref{initial conditions}).  Therefore, $P(m_1,m_2, m_3, m+1; q)$'s are the polynomials of $q$ with evidently positive coefficients. 
\qed

\begin{cor} \label{h(n, m) and polynomials}
For $n, m \in \mathbb{N}$, let $h(n, m)$ denote the number of partitions of $n$ into $m$ parts such that each part appears at most twice. Then,
\begin{align*} 
	H(t;q)&=\sum_{m, n \geq 0}h(n, m)q^nt^m\\
	&=\sum_{\substack{m_1, m_2, m_3,\\  m,n_{12}\geq 0}}\frac{P(m_1, m_2, m_3, m+1; q)q^{mn_{12}+n_{12}^2}t^{2m_1+2m_2+5m_3+n_{12}}}{(q;q)_{n_{12}}(q^3;q^3)_{m_1+m_2+2m_3}}.
\end{align*} 
where $P(m_1, m_2, m_3, m+1; q)$'s are the polynomials of $q$ with evidently positive coefficients constructed in Lemma \ref{polynomials}.
\end{cor}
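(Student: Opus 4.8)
The plan is to combine Theorem~\ref{each part appears at most twice} with Lemma~\ref{polynomials} by reindexing the sum over base partitions, so that the implicitly defined weight $|\beta|$ is replaced by the explicit polynomials $P(m_1,m_2,m_3,m+1;q)$. First I would recall from Theorem~\ref{each part appears at most twice} that
\[
	H(t;q)=\sum_{m,n\geq 0}h(n,m)q^nt^m=\sum_{\substack{n_1,n_2\geq 0\\ n_1=n_{11}+n_{12}\\ \beta}}\frac{q^{|\beta|}t^{2n_2+n_1}}{(q;q)_{n_{12}}(q^3;q^3)_{n_2}},
\]
where $\beta$ ranges over base partitions with $n_2$ pairs, $n_{11}$ immobile singletons, and $n_{12}$ moveable singletons. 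The goal is to group these base partitions according to the finer statistics $(m_1,m_2,m_3)$ of Lemma~\ref{polynomials}.

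The key structural observation is that each base partition $\beta$ decomposes uniquely into a \emph{core} --- comprising the $m_1$ pairs of repeating parts, the $m_2$ pairs of consecutive parts, and the $m_3$ blocks $[k-1,k],k,[k+2,k+2]$ --- followed by a run of moveable singletons of the form $m+1,m+3,\ldots,m+2n_{12}-1$, where the largest pair of the core is $[m,m]$ or $[m,m+1]$. This is exactly the shape of the partitions $(\ast)$ and $(\ast\ast)$ appearing in the proof of Lemma~\ref{polynomials}. From this I would read off the parameter dictionary: since each block carries one consecutive pair and one repeating pair while the remaining pairs are free, the total pair count is $n_2=m_1+m_2+2m_3$; and since the immobile singletons are precisely the middle parts of the blocks (one per block, and none occurring elsewhere), we have $n_{11}=m_3$. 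Substituting these gives the $t$-exponent
\[
	2n_2+n_1=2(m_1+m_2+2m_3)+m_3+n_{12}=2m_1+2m_2+5m_3+n_{12}
\]
and the denominator $(q;q)_{n_{12}}(q^3;q^3)_{m_1+m_2+2m_3}$, as required.

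It then remains to make $q^{|\beta|}$ explicit by splitting $|\beta|$ into the core weight and the moveable-singleton weight. The $n_{12}$ moveable singletons $m+1,m+3,\ldots,m+2n_{12}-1$ contribute
\[
	\sum_{i=1}^{n_{12}}(m+2i-1)=m\,n_{12}+n_{12}^2,
\]
which yields the factor $q^{mn_{12}+n_{12}^2}$. By Lemma~\ref{polynomials} --- in which the moveable-singleton weights are by design omitted --- the sum of $q^{(\text{core weight})}$ over all cores with fixed $(m_1,m_2,m_3)$ and largest pair $[m,m]$ or $[m,m+1]$ equals $P(m_1,m_2,m_3,m+1;q)$. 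Reindexing the outer sum from $(n_2,n_{11},n_{12},\beta)$ to $(m_1,m_2,m_3,m,n_{12})$ then produces the claimed identity term by term.

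The main obstacle I anticipate is verifying that the core/moveable-singleton decomposition is genuinely a bijection and, in particular, that $n_{11}=m_3$ --- that every immobile singleton is the middle entry of a unique block and that immobile singletons never occur outside blocks. This relies on the tightly-packed configurations analyzed in the proof of Theorem~\ref{each part appears at most twice} (cases \textbf{IIIa}--\textbf{IIIc} and the surrounding remark that an immobile singleton sits between a consecutive pair and a repeating pair), after which the identity reduces to the bookkeeping of matching $t$-exponents, denominators, and the two pieces of $|\beta|$.
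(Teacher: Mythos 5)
Your proposal is correct and takes essentially the same approach as the paper: the paper's entire proof consists of asserting the re-indexing identity you derive, namely that the sum of $q^{|\beta|}$ over base partitions in Theorem \ref{each part appears at most twice} can be rewritten via Lemma \ref{polynomials} with the dictionary $n_2=m_1+m_2+2m_3$, $n_{11}=m_3$, and the moveable-singleton contribution $q^{mn_{12}+n_{12}^2}$. Your write-up simply makes explicit the core/moveable-singleton decomposition and the bookkeeping that the paper leaves implicit in its one-line proof.
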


\begin{proof}
By Lemma \ref{polynomials}, we have:

\begin{align} \label{polynomial equation for weight of beta}
	&\sum_{\substack{n_1, n_2 \geq 0 \\ n_1=n_{11}+n_{12} \\ \beta}}\frac{q^{| \beta |}t^{2n_2+n_1}}{(q; q)_{n_{12}}(q^3; q^3)_{n_2}} \nonumber\\&=\sum_{\substack{m_1, m_2,m_3,\\ m,n_{12} \geq 0 }}\frac{P(m_1, m_2, m_3, m+1; q)q^{mn_{12}+n_{12}^2}t^{2m_1+2m_2+5m_3+n_{12}}}{(q;q)_{n_{12}}(q^3;q^3)_{m_1+m_2+2m_3}}
\end{align}

\noindent The result follows  by combining (\ref{polynomial equation for weight of beta})  and Theorem \ref{each part appears at most twice}.
\end{proof}

\noindent \emph{\textbf{Proof of Theorem \ref{2+2 not allowed-NEW}:}} 
We have:
\begin{align} \label{w/o weight of beta-first}
	&A(t;q;2)=\prod_{n= 1}^{\infty} \frac{(1+tq^{2n}+ t^2q^{4n})}{(1-tq^{2n-1})(1-t^2q^{4n})}.\frac{1}{(1-t)}  \quad \quad (\text{by Proposition \ref{evens appearing even times}})\nonumber\\
	&=\left( \prod_{n= 1}^{\infty} (1+tq^{2n}+ t^2q^{4n})\right)\prod_{n= 1}^{\infty}\frac{1}{(1-tq^{2n-1})(1-t^2q^{4n})}.\frac{1}{(1-t)} \nonumber\\
	&=H_0(t;q)\prod_{n= 1}^{\infty}\frac{1}{(1-tq^{2n-1})(1-t^2q^{4n})}.\frac{1}{(1-t)}  \quad \quad \quad \quad  \quad  \quad \quad  (\text{by (\ref{H0(t;q)}}))\nonumber\\
	&=H(t; q^2)\prod_{n= 1}^{\infty}\frac{1}{(1-tq^{2n-1})(1-t^2q^{4n})}.\frac{1}{(1-t)}\nonumber\\
	&=\sum_{\substack{m_1, m_2, m_3,\\  m,n_{12}\geq 0}}\frac{P(m_1, m_2, m_3, m+1; q^2)q^{2mn_{12}+2n_{12}^2}t^{2m_1+2m_2+5m_3+n_{12}}}{(q^2;q^2)_{n_{12}}(q^6;q^6)_{m_1+m_2+2m_3}}   \quad \quad  \text{(by Theorem \ref{h(n, m) and polynomials}})\nonumber
\end{align}
\begin{align} 
	&\times \frac{1}{(tq;q^2)_\infty(t^2q^4;q^4)_\infty}.\frac{1}{(1-t)}\nonumber\\
	&=\sum_{\substack{m_1, m_2,m_3,\\  m,n_{12}\geq 0}}\frac{P(m_1, m_2, m_3, m+1; q^2)q^{2mn_{12}+2n_{12}^2}t^{2m_1+2m_2+5m_3+n_{12}}}{(q^2;q^2)_{n_{12}}(q^6;q^6)_{m_1+m_2+2m_3}}\nonumber \quad \quad \text{(by (\ref{Euler 1}))}\\
	&\times \left(\sum_{i\geq 0} \frac{(tq)^i}{(q^2;q^2)_i} \right).\left( \sum_{j\geq 0}\frac{(t^2q^4)^j}{(q^4;q^4)_j}  \right). \left( \sum_{k\geq 0}t^k \right) \nonumber\\
	&=\sum_{\substack{m_1, m_2, m_3, \\ m,n_{12}, i, j, k\geq 0}} \frac{P(m_1, m_2, m_3, m+1; q^2)q^{2mn_{12}+2n_{12}^2+i+4j}t^{2m_1+2m_2+5m_3+n_{12}+i+2j+k}}{(q^2;q^2)_{n_{12}}(q^6;q^6)_{m_1+m_2+2m_3}(q^2;q^2)_i(q^4;q^4)_j} 
\end{align}

\noindent We now add the weight of the base partition $\beta$ to the exponent of $q$ in (\ref{w/o weight of beta-first}). The exponent of $t$ keeps track of the number of parts in the partition. Therefore, we need to consider the base partition $\beta$ with $2m_1+2m_2+5m_3+n_{12}+i+2j+k$ parts, namely $\beta=1+3+5+ \ldots +2(2m_1+2m_2+5m_3+n_{12}+i+2j+k)-1$. The weight of $\beta$ is $(2m_1+2m_2+5m_3+n_{12}+i+2j+k)^2$. It follows that
\begin{align}
	&\sum_{m,n \geq 0}kr_1(n,m)q^nt^m \nonumber\\
	&= \sum_{\substack{m_1, m_2, m_3, \\ m,n_{12}, i, j, k\geq 0}} \frac{P(m_1, m_2, m_3, m+1; q^2)q^{2mn_{12}+2n_{12}^2+i+4j}t^{2m_1+2m_2+5m_3+n_{12}+i+2j+k}}{(q^2;q^2)_{n_{12}}(q^6;q^6)_{m_1+m_2+2m_3}(q^2;q^2)_i(q^4;q^4)_j} \nonumber \\
	&\times \color{red}{q^{(2m_1+2m_2+5m_3+n_{12}+i+2j+k)^2}} \nonumber \\
	&= \sum_{\substack{m_1, m_2, m_3, \\ m,n_{12}, i, j, k\geq 0}} \frac{P(m_1, m_2, m_3, m+1; q^2)q^{2mn_{12}+2n_{12}^2+i+4j+(2m_1+2m_2+5m_3+n_{12}+i+2j+k)^2}}{(q^2;q^2)_{n_{12}}(q^6;q^6)_{m_1+m_2+2m_3}(q^2;q^2)_i(q^4;q^4)_j} \nonumber \\
	&\times t^{2m_1+2m_2+5m_3+n_{12}+i+2j+k} \nonumber
\end{align}
This proves (\ref{first new generating function}). The polynomials $P(m_1, m_2, m_3, m+1; q)$'s have evidently positive coefficients by Lemma \ref{polynomials}. Therefore, the generating function (\ref{first new generating function}) is an evidently positive series.
\qed

\begin{theorem} \label{1 not allowed-NEW}
	Consider the partitions satisfying the following conditions:
	
	\begin{enumerate}
		\item [(a)] No consecutive parts allowed. 
		\item [(b)] Odd parts do not repeat.
		\item [(c)] For a contiguous sub-partition $\lambda_i+\lambda_{i+1}+\lambda_{i+2}$, we have $| \lambda_i-\lambda_{i+2} | \geq 4 $ if $\lambda_{i+1}$ is even and appears more than once.
		\item [(d')] $1$ is not allowed to appear as a part.
	\end{enumerate}
	
	\noindent For $n, m \in \mathbb{N}$, let $kr_2(n, m)$ denote the number of partitions of n into m parts such that the partitions satisfy the conditions (a), (b), (c) and (d'). Then,
	\begin{align} \label{second new generating function}
		&\sum_{m,n \geq 0}kr_2(n,m)q^nt^m \nonumber\\
		&= \sum_{\substack{m_1, m_2, m_3, \\ m,n_{12}, i, j\geq 0}} \frac{P(m_1, m_2, m_3, m+1; q^2)q^{2mn_{12}+2n_{12}^2+i+(2m_1+2m_2+5m_3+n_{12}+i+2j)^2}}{(q^2;q^2)_{n_{12}}(q^6;q^6)_{m_1+m_2+2m_3}(q^2;q^2)_i(q^4;q^4)_j} \nonumber \\
		&\times t^{2m_1+2m_2+5m_3+n_{12}+i+2j} 
	\end{align}
	where $P(m_1, m_2, m_3, m+1; q)$'s are the polynomials of $q$  with evidently positive coefficients constructed in Lemma \ref{polynomials}. Moreover, the generating function (\ref{second new generating function}) is an evidently positive series. 
\end{theorem}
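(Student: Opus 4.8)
The plan is to mirror the proof of Theorem \ref{2+2 not allowed-NEW} almost verbatim, the sole difference being that the generating function for $kr_2$ is obtained from $B(t;q;2)$ of Proposition \ref{evens appearing even times-with even number of 0s} rather than from $A(t;q;2)$. Recall that, with $a=2$,
$$B(t;q;2)=\prod_{n=1}^\infty\frac{(1+tq^{2n}+t^2q^{4n})}{(1-tq^{2n-1})(1-t^2q^{4n})}\cdot\frac{1}{1-t^2}.$$
As in the earlier proof I would first pull out the numerator $H_0(t;q)=\prod_{n\geq 1}(1+tq^{2n}+t^2q^{4n})=H(t;q^2)$ and expand it by Corollary \ref{h(n, m) and polynomials} (with $q$ replaced by $q^2$), producing the factor $P(m_1,m_2,m_3,m+1;q^2)\,q^{2mn_{12}+2n_{12}^2}$ together with the denominators $(q^2;q^2)_{n_{12}}(q^6;q^6)_{m_1+m_2+2m_3}$ and the partial weight $t^{2m_1+2m_2+5m_3+n_{12}}$.

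The one genuinely new manipulation is the treatment of the surviving factors. After extracting $H(t;q^2)$ we are left with
$$\frac{1}{(tq;q^2)_\infty}\cdot\frac{1}{(t^2q^4;q^4)_\infty}\cdot\frac{1}{1-t^2}.$$
Here I would absorb the extra $1/(1-t^2)$ into the second $q$-Pochhammer via $(t^2;q^4)_\infty=(1-t^2)(t^2q^4;q^4)_\infty$, so that
$$\frac{1}{(t^2q^4;q^4)_\infty}\cdot\frac{1}{1-t^2}=\frac{1}{(t^2;q^4)_\infty}.$$
This is precisely the step that makes the final series differ from (\ref{first new generating function}): expanding $1/(tq;q^2)_\infty$ and $1/(t^2;q^4)_\infty$ by Euler's identity (\ref{Euler 1}) gives $\sum_{i\geq 0}(tq)^i/(q^2;q^2)_i$ and $\sum_{j\geq 0}t^{2j}/(q^4;q^4)_j$. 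The second sum now carries no power of $q$ (contrast the $q^{4j}$ of the $2+2$ case), and the separate geometric index $k$ coming from $1/(1-t)$ in the $A(t;q;2)$ computation is here subsumed into $j$, which explains why no index $k$ appears in (\ref{second new generating function}).

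Collecting these pieces yields $B(t;q;2)$ as the claimed multiple sum \emph{before} the base partition is weighted, with $q^{2mn_{12}+2n_{12}^2+i}$ and $t^{2m_1+2m_2+5m_3+n_{12}+i+2j}$. To finish I would add the weight of the base partition exactly as in Theorem \ref{2+2 not allowed-NEW}: since the exponent of $t$ records the number of parts, I take $\beta=1+3+\cdots+(2N-1)$ with $N=2m_1+2m_2+5m_3+n_{12}+i+2j$ parts, whose weight is $N^2$, and multiply the summand by $q^{(2m_1+2m_2+5m_3+n_{12}+i+2j)^2}$; this produces (\ref{second new generating function}). The positivity is then immediate, since every $P(m_1,m_2,m_3,m+1;q^2)$ has nonnegative coefficients by Lemma \ref{polynomials} and all remaining factors are manifestly nonnegative. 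I do not expect a substantial obstacle; the only point requiring care is the Pochhammer bookkeeping $(t^2;q^4)_\infty=(1-t^2)(t^2q^4;q^4)_\infty$ that collapses two indices into one, together with checking that the validity of Corollary \ref{h(n, m) and polynomials} is unaffected by the substitution $q\mapsto q^2$.
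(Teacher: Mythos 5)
Your proposal is correct and follows essentially the same route as the paper's own proof: factor out $H(t;q^2)$ and expand it via Corollary \ref{h(n, m) and polynomials}, absorb $1/(1-t^2)$ into $(t^2q^4;q^4)_\infty$ to get $1/(t^2;q^4)_\infty$ (which is exactly why the index $k$ disappears), expand by Euler's identity (\ref{Euler 1}), and finally attach the staircase weight $q^{(2m_1+2m_2+5m_3+n_{12}+i+2j)^2}$ with positivity following from Lemma \ref{polynomials}. Your explicit remark on the Pochhammer bookkeeping $(t^2;q^4)_\infty=(1-t^2)(t^2q^4;q^4)_\infty$ is the one step the paper performs silently, so nothing is missing.
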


\begin{proof}
We have:
\begin{align} \label{w/o weight of beta-second}
	&B(t; q; 2)=\prod_{n = 1}^{\infty} \frac{(1+tq^{2n}+ t^2q^{4n})}{(1-tq^{2n-1})(1-t^2q^{4n})}.\frac{1}{(1-t^2)} \quad \quad (\text{by Proposition \ref{evens appearing even times-with even number of 0s}}) \nonumber \\
	&=\left( \prod_{n= 1}^{\infty} (1+tq^{2n}+ t^2q^{4n})\right)\prod_{n= 1}^{\infty}\frac{1}{(1-tq^{2n-1})(1-t^2q^{4n})}.\frac{1}{(1-t^2)} \nonumber\\
	&=H_0(t;q)\prod_{n= 1}^{\infty}\frac{1}{(1-tq^{2n-1})(1-t^2q^{4n})}.\frac{1}{(1-t^2)}  \quad \quad \quad \quad  \quad  \quad \quad  (\text{by (\ref{H0(t;q)}}))\nonumber \\
	&=H(t; q^2)\prod_{n= 1}^{\infty}\frac{1}{(1-tq^{2n-1})(1-t^2q^{4n})}.\frac{1}{(1-t^2)} \nonumber \\
	&=\sum_{\substack{m_1, m_2, m_3,\\  m,n_{12}\geq 0}}\frac{P(m_1, m_2, m_3, m+1; q^2)q^{2mn_{12}+2n_{12}^2}t^{2m_1+2m_2+5m_3+n_{12}}}{(q^2;q^2)_{n_{12}}(q^6;q^6)_{m_1+m_2+2m_3}}   \quad  (\text{by Theorem \ref{h(n, m) and polynomials}})\nonumber \\
	&\times \frac{1}{(tq;q^2)_\infty(t^2;q^4)_\infty} \nonumber \\
	&=\sum_{\substack{m_1, m_2,m_3,\\  m,n_{12}\geq 0}}\frac{P(m_1, m_2, m_3, m+1; q^2)q^{2mn_{12}+2n_{12}^2}t^{2m_1+2m_2+5m_3+n_{12}}}{(q^2;q^2)_{n_{12}}(q^6;q^6)_{m_1+m_2+2m_3}} \quad \quad \text{(by (\ref{Euler 1}))} \nonumber\\
	&\times \left(\sum_{i\geq 0} \frac{(tq)^i}{(q^2;q^2)_i} \right).\left( \sum_{j\geq 0}\frac{(t^2)^j}{(q^4;q^4)_j}  \right)\nonumber\\
	&=\sum_{\substack{m_1, m_2, m_3, \\ m,n_{12}, i, j\geq 0}} \frac{P(m_1, m_2, m_3, m+1; q^2)q^{2mn_{12}+2n_{12}^2+i}t^{2m_1+2m_2+5m_3+n_{12}+i+2j}}{(q^2;q^2)_{n_{12}}(q^6;q^6)_{m_1+m_2+2m_3}(q^2;q^2)_i(q^4;q^4)_j} 
\end{align}
\noindent We now add the weight of the base partition $\beta$ to the exponent of $q$ in (\ref{w/o weight of beta-second}). The exponent of $t$ keeps track of the number of parts in the partition. Therefore, we need to consider the base partition $\beta$ with $2m_1+2m_2+5m_3+n_{12}+i+2j$ parts, namely $\beta=1+3+5+ \ldots +2(2m_1+2m_2+5m_3+n_{12}+i+2j)-1$. The weight of $\beta$ is $(2m_1+2m_2+5m_3+n_{12}+i+2j)^2$. It follows that
\begin{align}
	&\sum_{m,n \geq 0}kr_2(n,m)q^nt^m \nonumber\\
	&= \sum_{\substack{m_1, m_2, m_3, \\ m,n_{12}, i, j\geq 0}} \frac{P(m_1, m_2, m_3, m+1; q^2)q^{2mn_{12}+2n_{12}^2+i}t^{2m_1+2m_2+5m_3+n_{12}+i+2j}}{(q^2;q^2)_{n_{12}}(q^6;q^6)_{m_1+m_2+2m_3}(q^2;q^2)_i(q^4;q^4)_j} \nonumber \\
	&\times \color{red}{q^{(2m_1+2m_2+5m_3+n_{12}+i+2j)^2}} \nonumber \\
	&= \sum_{\substack{m_1, m_2, m_3, \\ m,n_{12}, i, j\geq 0}} \frac{P(m_1, m_2, m_3, m+1; q^2)q^{2mn_{12}+2n_{12}^2+i+(2m_1+2m_2+5m_3+n_{12}+i+2j)^2}}{(q^2;q^2)_{n_{12}}(q^6;q^6)_{m_1+m_2+2m_3}(q^2;q^2)_i(q^4;q^4)_j} \nonumber \\
	&\times t^{2m_1+2m_2+5m_3+n_{12}+i+2j} \nonumber
\end{align}
This proves (\ref{second new generating function}). The polynomials $P(m_1, m_2, m_3, m+1; q)$'s have evidently positive coefficients by Lemma \ref{polynomials}. Therefore, the generating function (\ref{second new generating function}) is an evidently positive series.
\end{proof}

\begin{theorem}  \label{1, 2, 3 not allowed-NEW}
	Consider the partitions satisfying the following conditions:
	
	\begin{enumerate} 
		\item [(a)]  No consecutive parts allowed. 
		\item [(b)] Odd parts do not repeat.
		\item [(c)] For a contiguous sub-partition $\lambda_i+\lambda_{i+1}+\lambda_{i+2}$, we have $| \lambda_i-\lambda_{i+2} | \geq 4 $ if $\lambda_{i+1}$ is even and appears more than once.
		\item [($d''$)]$1, 2$ and $3$ are not allowed to appear as parts.
	\end{enumerate}
	
	\noindent For $n, m \in \mathbb{N}$, let $kr_3(n, m)$ denote the number of partitions of n into m parts such that the partitions satisfy the conditions (a), (b), (c) and (d''). Then,
	\begin{align} \label{third new generating function}
		&\sum_{m,n \geq 0}kr_3(n,m)q^nt^m \nonumber\\
		&= \sum_{\substack{m_1, m_2, m_3, \\ m,n_{12}, i, j\geq 0}} \frac{P(m_1, m_2, m_3, m+1;  q^2)}{(q^2;q^2)_{n_{12}}(q^6;q^6)_{m_1+m_2+2m_3}(q^2;q^2)_i(q^4;q^4)_j}  \\
		&\times q^{2mn_{12}+2n_{12}^2+3i+4j+4m_1+4m_2+10m_3+2n_{12}+(2m_1+2m_2+5m_3+n_{12}+i+2j)^2}t^{2m_1+2m_2+5m_3+n_{12}+i+2j} \nonumber
	\end{align}
	where $P(m_1, m_2, m_3, m+1; q)$'s  are the polynomials of $q$  with evidently positive coefficients constructed in Lemma \ref{polynomials}. Moreover, the generating function (\ref{third new generating function}) is an evidently positive series. 
\end{theorem}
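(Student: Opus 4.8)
The plan is to avoid redoing any combinatorial analysis and instead piggyback on the weight-shift bijection already used to prove Theorem~\ref{1, 2, 3 not allowed}. There it is established that adding $2$ to every part gives a bijection between the partitions counted by $kr_2(n,m)$ and those counted by $kr_3(n+2m,m)$; at the level of two-variable generating functions this is precisely the substitution $t^m \mapsto t^m q^{2m}$. So instead of starting from the product $B(t;q;2)$ as in Theorem~\ref{1 not allowed}, I would apply exactly this substitution to the \emph{evidently positive} series for $kr_2$ furnished by Theorem~\ref{1 not allowed-NEW}, and then check that the outcome is the right-hand side of (\ref{third new generating function}). This is the direct analogue of how Theorem~\ref{1, 2, 3 not allowed} was deduced from Theorem~\ref{1 not allowed}, only now applied to the positive-series form.

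The single computation to carry out is a bookkeeping check on exponents. In the series (\ref{second new generating function}) every monomial carries the $t$-exponent $M := 2m_1+2m_2+5m_3+n_{12}+i+2j$, so $t^m \mapsto t^m q^{2m}$ multiplies each term by $q^{2M}$, leaving the $t$-exponent unchanged and adding $2M = 4m_1+4m_2+10m_3+2n_{12}+2i+4j$ to the $q$-exponent. Starting from the $q$-exponent $2mn_{12}+2n_{12}^2+i+(2m_1+2m_2+5m_3+n_{12}+i+2j)^2$ of (\ref{second new generating function}), I would observe that the square term is untouched, the linear term $i$ combines with the $2i$ coming from $2M$ to give $3i$, and the remaining summands $4j$, $4m_1$, $4m_2$, $10m_3$, $2n_{12}$ are supplied verbatim by $2M$. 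This reproduces the exponent $2mn_{12}+2n_{12}^2+3i+4j+4m_1+4m_2+10m_3+2n_{12}+(2m_1+2m_2+5m_3+n_{12}+i+2j)^2$ displayed in (\ref{third new generating function}), with the denominators $(q^2;q^2)_{n_{12}}(q^6;q^6)_{m_1+m_2+2m_3}(q^2;q^2)_i(q^4;q^4)_j$ and the numerator factor $P(m_1,m_2,m_3,m+1;q^2)$ carried along unchanged.

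For the evident positivity, since $M \ge 0$ the substitution only raises $q$-powers by a nonnegative amount, so it cannot destroy positivity: the factors $P(m_1,m_2,m_3,m+1;q^2)$ are polynomials in $q$ with nonnegative coefficients by Lemma~\ref{polynomials}, and every other factor is a reciprocal $q$-Pochhammer symbol whose expansion has nonnegative coefficients, so each term of (\ref{third new generating function}) is manifestly a nonnegative power series. I do not expect any genuine obstacle here; the only place to be careful is the arithmetic of merging $2M$ into the $q$-exponent, and so I would present the whole argument as an immediate consequence of Theorem~\ref{1 not allowed-NEW} together with the bijection of Theorem~\ref{1, 2, 3 not allowed}.
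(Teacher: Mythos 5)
Your proposal is correct and follows essentially the same route as the paper: the paper's proof likewise applies the substitution $t^m \mapsto t^m q^{2m}$ (justified by the bijection from the proof of Theorem \ref{1, 2, 3 not allowed}) directly to the evidently positive series \eqref{second new generating function} of Theorem \ref{1 not allowed-NEW}, and your exponent bookkeeping, adding $2M = 4m_1+4m_2+10m_3+2n_{12}+2i+4j$ to obtain the exponent in \eqref{third new generating function}, matches the paper's computation exactly, as does the positivity argument via Lemma \ref{polynomials}.
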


\begin{proof}
In the proof of Theorem \ref{1, 2, 3 not allowed}, we showed that there is a $1$-$1$ correspondence between the partitions enumerated by $kr_2(n,m)$ and the partitions enumerated by $kr_3(n,m)$. To construct an evidently positive series as the generating function of the partitions enumerated by $kr_3(n,m)$, we let $t^m \mapsto t^mq^{2m}$ in (\ref{second new generating function}) and we get:
\begin{align}
	&\sum_{m,n \geq 0}kr_3(n,m)q^nt^m \nonumber\\
	&= \sum_{\substack{m_1, m_2, m_3, \\ m,n_{12}, i, j\geq 0}} \frac{P(m_1, m_2, m_3, m+1; q^2)q^{2mn_{12}+2n_{12}^2+i+(2m_1+2m_2+5m_3+n_{12}+i+2j)^2}}{(q^2;q^2)_{n_{12}}(q^6;q^6)_{m_1+m_2+2m_3}(q^2;q^2)_i(q^4;q^4)_j} \nonumber \\
	&\times (tq^2)^{2m_1+2m_2+5m_3+n_{12}+i+2j} \nonumber\\
	&= \sum_{\substack{m_1, m_2, m_3, \\ m,n_{12}, i, j\geq 0}} \frac{P(m_1, m_2, m_3, m+1; q^2)}{(q^2;q^2)_{n_{12}}(q^6;q^6)_{m_1+m_2+2m_3}(q^2;q^2)_i(q^4;q^4)_j} \nonumber \\
	&\times q^{2mn_{12}+2n_{12}^2+3i+4j+4m_1+4m_2+10m_3+2n_{12}+(2m_1+2m_2+5m_3+n_{12}+i+2j)^2}t^{2m_1+2m_2+5m_3+n_{12}+i+2j} \nonumber
\end{align}
This proves (\ref{third new generating function}). The polynomials $P(m_1, m_2, m_3, m+1; q)$'s have evidently positive coefficients by Lemma \ref{polynomials}. Therefore, the generating function (\ref{third new generating function}) is an evidently positive series.
\end{proof}

\section{Closed Formulas of the polynomials $P(m_1,m_2, m_3, m+1; q)$ for Some Special Cases}
\label{secSpecialCases} 


In this section, we list a few closed formulas 
for the polynomials $P(m_1,m_2, m_3, m+1; q)$ in some special cases.  
We also list some of the $P(m_1,m_2, m_3, m+1; q)$'s for small values of the parameters.  

\begin{prop}
 \begin{align}
 \label{eqPx00} 
  P(m_1, 0,0,m+1;q) & = 
    q^{2m_1^2 - 2 m m_1 + m^2 + m} \begin{bmatrix} m_1 \\ m - m_1 \end{bmatrix}_{q^2} \\ 
 \label{eqP0x0} 
  P(0, m_2,0,m+1;q) & = 
    q^{2m_2^2 + m_2 - 2 m m_2 + m^2 + m} \begin{bmatrix} m_2 - 1 \\ m - m_2 \end{bmatrix}_{q^2} \\ 
 \label{eqP00x} 
  P(0, 0 ,m_3, m+1;q) & = 
    \begin{cases} q^{10 m_3^2 + 23 m_3}, & \textrm{ if } m = 4 m_3 \\ 
        0, & \textrm{ otherwise } \end{cases} \\
 \label{eqPx0x} 
  P(m_1, 0 ,m_3, m_1 + 4 m_3 +1;q) & = 
    q^{ m_1^2 + m_1 + 5 m_1 m_3 + 10 m_3 + 23 m_3} \begin{bmatrix} m_1 + m_3 \\ m_1 \end{bmatrix}_{q^3} \\
 \label{eqP0xx} 
  P(0, m_2 ,m_3, m_2 + 4 m_3 +1;q) & = 
    q^{ m_2^2 + 2 m_2 + 5 m_2 m_3 + 10 m_3 + 23 m_3} \begin{bmatrix} m_2 + m_3 \\ m_2 \end{bmatrix}_{q^3}
 \end{align}
\end{prop}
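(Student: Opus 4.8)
The plan is to derive each formula by specializing the functional equations (\ref{functional equation of P0}) and (\ref{functional equation of P1}) from Lemma~\ref{polynomials} to the case where two of the three structural parameters vanish, and then recognizing the resulting one-parameter recurrences as the defining relations for $q$-binomial coefficients. First I would treat (\ref{eqP00x}): when $m_1=m_2=0$ every base partition counted by $P(0,0,m_3,m+1;q)$ consists entirely of $m_3$ tightly packed blocks, so there is essentially no freedom left and one computes directly that the smallest such configuration is forced. Iterating the block-deletion branch of (\ref{functional equation of P0}) (the $q^{5m-7}$ terms) $m_3$ times, tracking how $m$ decreases by $4$ per block, should give the constraint $m=4m_3$ together with the exponent $10m_3^2+23m_3$; this serves as a warm-up and as the base case for the mixed formulas.

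Next I would establish (\ref{eqPx00}) and (\ref{eqP0x0}), where $m_3=0$ so only the pair-deletion branches survive. Setting $m_3=0$ in (\ref{functional equation of P0}) and (\ref{functional equation of P1}) collapses them to coupled recurrences in $(m_1,m)$ and $(m_2,m)$ respectively. The strategy is to substitute the conjectured closed form $q^{2m_1^2-2mm_1+m^2+m}\left[\begin{smallmatrix} m_1 \\ m-m_1\end{smallmatrix}\right]_{q^2}$ into the recurrence and verify it using the standard $q$-Pascal identity $\left[\begin{smallmatrix} n \\ k\end{smallmatrix}\right]_{q^2}=\left[\begin{smallmatrix} n-1 \\ k-1\end{smallmatrix}\right]_{q^2}+q^{2k}\left[\begin{smallmatrix} n-1 \\ k\end{smallmatrix}\right]_{q^2}$, checking that the prefactor exponents match the $q^{2m}$ and $q^{2m+1}$ weights. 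I would confirm that the boundary data from (\ref{initial conditions}) agree with the closed form, so that the induction on $m_1$ (resp.\ $m_2$) is anchored.

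For the two remaining formulas (\ref{eqPx0x}) and (\ref{eqP0xx}), the base partition is evaluated at the \emph{minimal} largest part $m=m_1+4m_3$ (resp.\ $m_2+4m_3$), meaning there are no moveable singletons between the packed structure and the largest pair; here both the pair-branch and the block-branch of the recurrence are active. The approach is to induct on $m_1+m_3$ (resp.\ $m_2+m_3$), peeling off either the largest pair or the largest block and invoking the $q$-Pascal rule in base $q^3$, namely $\left[\begin{smallmatrix} m_1+m_3 \\ m_1\end{smallmatrix}\right]_{q^3}=\left[\begin{smallmatrix} m_1+m_3-1 \\ m_1-1\end{smallmatrix}\right]_{q^3}+q^{3m_1}\left[\begin{smallmatrix} m_1+m_3-1 \\ m_1\end{smallmatrix}\right]_{q^3}$, to combine the two contributions; the factor of $q^3$ here reflects that deleting a pair lowers the weight in steps commensurate with the block-spacing. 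I expect the main obstacle to be bookkeeping the exponent of $q$ in the prefactors: because deleting a pair versus a block shifts $m$ by different amounts ($1$ or $2$ versus $4$) and contributes different powers ($q^{2m}$, $q^{2m+1}$, or $q^{5m-7}$), I will need to verify carefully that after substituting the closed form the surviving exponents telescope exactly into the stated quadratic forms. Matching these exponents — rather than the $q$-binomial manipulation itself — is where an error is most likely to hide, so I would devote the bulk of the verification to confirming the quadratic exponents $m_1^2+m_1+5m_1m_3+33m_3$ and its analogue agree on both sides of each recurrence.
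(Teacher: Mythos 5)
Your plan is essentially the paper's own proof. For \eqref{eqPx00} and \eqref{eqP0x0} the paper does exactly what you propose: specialize the functional equations \eqref{functional equation of P0}, \eqref{functional equation of P1}, substitute the closed form, reduce to a $q$-Pascal relation for $q^2$-binomial coefficients, and settle the initial values combinatorially. For \eqref{eqP00x} the paper replaces your iteration of the block branch by the equivalent one-line observation that $m_3$ tightly packed blocks form a unique configuration whose smallest available singleton is $4m_3+1$; and for \eqref{eqPx0x}, \eqref{eqP0xx} the paper merely says the proofs are similar, so your induction with the base-$q^3$ Pascal rule is in fact more explicit than what is written there.

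Two concrete corrections to your plan, though. First, the specialized recurrences are \emph{not} coupled, and your substitution step silently requires this: when $m_2=0$, every term on the right-hand side of \eqref{functional equation of P1} carries the parameter $m_2-1=-1$, so $P_1(m_1,0,m_3,\cdot\,;q)\equiv 0$ and $P=P_0$; when $m_1=m_3=0$, the same collapse kills $P_0(0,m_2,0,\cdot\,;q)$ for $m_2\geq 1$, so $P=P_1$ there. One cannot substitute a closed form for the sum $P=P_0+P_1$ into a genuinely coupled system for the two pieces, so establishing this vanishing must be the first step --- and it is the first step of the paper's proof. Second, your instinct that the exponents are where trouble hides is right, but for an unexpected reason: as printed they contain typos, so a faithful verification against the recurrence will fail. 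The appendix values $P(0,0,1,5;q)=q^{13}$ and $P(0,0,2,9;q)=q^{46}$ force the exponent in \eqref{eqP00x} to be $10m_3^2+3m_3$ (not $10m_3^2+23m_3$), and correspondingly the term you read as $33m_3$ in \eqref{eqPx0x}, \eqref{eqP0xx} must be $10m_3^2+3m_3$; for instance $m_1=m_3=1$ then gives $q^{20}+q^{23}=q^{20}\begin{bmatrix} 2 \\ 1\end{bmatrix}_{q^3}$, matching $P(1,0,1,6;q)$ in the appendix. With the corrected exponents your telescoping check goes through exactly as described: for \eqref{eqPx0x} the recursion at the minimal value $m=m_1+4m_3$ reduces to
\begin{equation*}
\begin{bmatrix} m_1+m_3 \\ m_1 \end{bmatrix}_{q^3} = q^{3m_3}\begin{bmatrix} m_1+m_3-1 \\ m_1-1 \end{bmatrix}_{q^3} + \begin{bmatrix} m_1+m_3-1 \\ m_1 \end{bmatrix}_{q^3},
\end{equation*}
which is the $q$-Pascal rule in base $q^3$ you anticipated.
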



\begin{proof}
 We remember that $P(m_1, 0,0,m+1;q) = P_0(m_1, 0,0,m+1;q) + P_1(m_1, 0,0,m+1;q)$.  
 \eqref{functional equation of P1} will yield a negative parameter for each term 
 on the right hand side of $P_1(m_1, 0,0,m+1;q)$.  
 By \eqref{initial conditions} each of these terms, 
 hence $P_1(m_1, 0,0,m+1;q)$ are zero.  
 We see that 
 \begin{align}
  \label{P becomes P_0}
  P(m_1, 0,0,m+1;q) = P_0(m_1, 0,0,m+1;q).  
 \end{align}
 Then, \eqref{functional equation of P0} applied to $P_0(m_1, 0,0,m+1;q)$ transforms to 
 \begin{align}
  \nonumber 
  P_0(m_1, 0,0,m+1;q) = q^{2m} P_0(m_1-1, 0,0,(m-1)+1;q) + q^{2m} P_0(m_1-1, 0,0,(m-2)+1;q).  
 \end{align}
 Using \eqref{P becomes P_0}, we obtain
 \begin{align}
  \nonumber
  & P(m_1, 0,0,m+1;q) \\
  \label{functional equation for Px00} 
  & = q^{2m} P(m_1-1, 0,0,(m-1)+1;q) + q^{2m} P(m_1-1, 0,0,(m-2)+1;q).  
 \end{align}
 The next step is a verification of the asserted formula \eqref{eqPx00}.  
 When we substitute that expression for each of the terms in \eqref{functional equation for Px00} 
 and simplify the like powers of $q$, the expression reduces to 
 \begin{align}
  \nonumber
  \begin{bmatrix} m_1 \\ m - m_1 \end{bmatrix}_{q^2}
  = q^{2m} \begin{bmatrix} m_1 - 1 \\ m - m_1 \end{bmatrix}_{q^2}
  + q^{2m} \begin{bmatrix} m_1 - 1 \\ m - m_1 - 1 \end{bmatrix}_{q^2}, 
 \end{align}
 which is one of the Pascal relations for the $q$-binomial coefficients~\cite{the-theory-of-partitions}.   
 
 Finally, when $m_1 = m \geq 0$, the lower index of the binomial coefficient becomes zero, 
 the upper index is non-negative, hence the $q$-binomial coefficient reduces to $1$.  
 To explain 
 \begin{align}
  \nonumber 
  P(m, 0,0,m+1;q) = q^{m^2 + m}, 
 \end{align}
 we observe that when a base partition contains $m$ pairs of repeating parts, 
 the smallest the next singleton can be is $m+1$.  
 Because, each pair will occupy an integer; 
 the smallest ones possible being $1, 2, 3, \ldots, m$.  
 We need to keep in mind that there are no blocks in this setting, 
 so no singleton is stuck between pairs.  
 
 The same argument shows that $P(m_1, 0,0,m+1;q) = 0$ when $m_1 > m$.  
 This takes care of all initial values, so the proof of \eqref{eqPx00} is complete.  
 
 It is necessary to point out that it is possible to make the proof of \eqref{eqPx00} fully combinatorial.  
 It is the standard argument making connections 
 to yet other types of moves of the pairs among themselves 
 and the combinatorial description of the $q$-binomial coefficients as 
 restricted partition generating functions~\cite{the-theory-of-partitions}.  
 
 The proof of \eqref{eqP0x0} is analogous.  
 The difference is the examination of part of the initial cases.  
 $P_1(0, 0, 0, 1; q) = \begin{bmatrix} -1 \\ 0 \end{bmatrix}_{q^2} = 0$ is already discussed 
 in the proof of Lemma \ref{polynomials}.  
 To argue that 
 \begin{align}
  \nonumber
  P_1(0, m, 0, m+1; q) = q^{m^2+2m} \begin{bmatrix} m-1 \\ 0 \end{bmatrix}_{q^2} = q^{m^2 + 2m}, 
 \end{align}
 for $m \geq 1$, we observe that the smallest singleton after $m$ pairs of consecutive parts 
 can be $m+1$.  
 
 The proof of \eqref{eqP00x} is simply lining up $m_3$ blocks 
 so that no further backward moves on the constituent pairs are possible, 
 and observing that the smallest singleton after those can be $4m_3 + 1$. 
 
 The proofs of \eqref{eqPx0x} and \eqref{eqP0xx} 
 are similar to those of \eqref{eqPx00} and \eqref{eqP0x0}.  
\end{proof}

Undoubtedly there are many more formulas like these, 
but a closed formula for the general case, 
i.e. for $P(m_1,m_2, m_3, m+1; q)$ without any restriction of the parameters,
looks out of reach at the moment.  
One reason seems to be the incompatibility of the bases $q^2$ and $q^3$ in the special cases 
in the above proposition.  

We tabulate or list explicit formulas for the polynomials 
for small values of the parameters in the appendix.  

\section{Conclusion}
\label{Conclusion}	

In Section \ref{Alternative Construction}, we gave an alternative construction for some generating functions which are first constructed by Kanade and Russell \cite{sk-mcr-staircases}. \noindent In \cite{sk-mcr-staircases}, Kanade and Russell presented the following identities as conjectures:

\begin{align} \label{first product}
	\sum_{i, j, k \geq 0} \frac{(-1)^kq^{(i+2j+3k)(i+2j+3k-1)+i+6j+3k^2+6k}}{(q;q)_i(q^4;q^4)_j(q^6;q^6)_k}&=\frac{1}{(q, q^4, q^6, q^8, q^{11};q^{12})_\infty} \\
	\sum_{i, j, k \geq 0} \frac{(-1)^kq^{(i+2j+3k)(i+2j+3k-1)+2i+2j+3k^2+6k}}{(q;q)_i(q^4;q^4)_j(q^6;q^6)_k}&=\frac{(q^6;q^{12})_\infty}{(q^2, q^3, q^4, q^8, q^9, q^{10} ;q^{12})_\infty} \label{second product} \\
	&=\frac{(q^6;q^{12})_\infty}{(q^2, q^3, q^4;q^6)_\infty} \nonumber\\
	\sum_{i, j, k \geq 0} \frac{(-1)^kq^{(i+2j+3k)(i+2j+3k-1)+4i+6j+3k^2+12k}}{(q;q)_i(q^4;q^4)_j(q^6;q^6)_k}&= \frac{1}{(q^4, q^5, q^6, q^7, q^8;q^{12})_\infty}. \label{third product}
\end{align}

\noindent In 2020, Bringmann, Jennings-Shaffer and Mahlburg gave the proofs of conjectured identities (\ref{first product}), (\ref{second product}) and (\ref{third product})  \cite{proofs and reductions of various conjectured partition identities of Kanade and Russell}. 

\noindent By using the method that we use for constructing generating functions (\ref{first generating function}), (\ref{second generating function}) and (\ref{third generating function}), one can construct hundreds of generating functions. By the constructing of hundreds of generating functions, we mean that we may change the conditions concerning unallowed initial parts and we can construct generating functions for those  partitions.  The key point is that the generating functions (\ref{first generating function}), (\ref{second generating function}) and (\ref{third generating function}) have a nice infinite product representations as in (\ref{first product}), (\ref{second product})and (\ref{third product}) (When we take $t=1$).

In Section \ref{Evidently Positive Series}, we constructed evidently positive series (\ref{first new generating function}), (\ref{second new generating function}) and (\ref{third new generating function}) as the generating functions of partitions. When we construct those evidently positive series, we first obtain an evidently positive series for a key infinite product, namely (\ref{H0(t;q)}). The idea in the construction for the key infinite product follows from \cite{AG-type series-capparelli}, \cite{AG-type series-kanade and russell} and \cite{AG-type series-schur}. 

In (\ref{first new generating function}), (\ref{second new generating function}) and (\ref{third new generating function}),  we had to consider the polynomials for the weight of the base partition $\beta$.  Because,  for a fixed number of pairs and singletons, there exist different base partitions. It would be better if we were able to write a closed formula for the polynomials $P(m_1,m_2, m_3, m+1; q)$ without any restriction of the parameters, but at least we know the functional equations concerning those polynomials. Better than that would be constructing a monomial of $q$ as the generating function of the base partitions $\beta$'s with a given number of pairs and singletons. It is a quite interesting question and it waits for an answer. 

\section*{Acknowledgements} 

The majority of the results in this paper are from the second author's Ph.D. dissertation.  
The first author acknowledges Bilim Akademisi BAGEP support with gratitute.  

\bibliographystyle{amsplain}

\newpage
\noindent
{\bf Appendix: }

For $m_3 = 0$: 

\rotatebox{90}{\small
\begin{tabular}{|c|c|c|}
 \hline
 $m_1 \backslash m_2$ & 1 & 2 \\ \hline 
 1 & $\begin{array}{l} P(1,1,0,\leq 2; q) = 0 \\ 
        P(1,1,0,3;q) = q^7 \\
        P(1,1,0,3;q) = q^{11} + q^{9} \\
        P(1,1,0, \geq 5; q) = 0\end{array}$ 
    & $\begin{array}{l} P(1,2,0,\leq 3; q) = 0 \\ 
        P(1,2,0,4;q) = q^{14} \\
        P(1,2,0,5;q) = q^{20} + q^{18} + q^{16} \\
        P(1,2,0,6;q) = q^{22} + q^{20} \\
        P(1,2,0, \geq 7; q) = 0\end{array}$ \\ \hline 
 2 & $\begin{array}{l} P(2,1,0,\leq 3; q) = 0 \\ 
        P(2,1,0,4;q) = q^{13} \\
        P(2,1,0,5;q) = q^{19}+2q^{17}+q^{15} \\
        P(2,1,0,6;q) = q^{23}+q^{21}+q^{19} \\
        P(2,1,0, \geq 7; q) = 0\end{array}$ 
    & $\begin{array}{l} P(2,2,0,\leq 4; q) = 0 \\ 
        P(2,2,0,5;q) = q^{22} \\
        P(2,2,0,6;q) = q^{30}+2q^{28}+2q^{26}+2q^{24} \\
        P(2,2,0,7;q) = q^{36}+q^{34}+4q^{32}+2q^{30}+2q^{28} \\
        P(2,2,0,8;q) = q^{38}+q^{36}+q^{34} \\
        P(2,2,0,\geq 9; q) = 0\end{array}$ \\ \hline 
 3 & $\begin{array}{l} P(3,1,0,\leq 4; q) = 0 \\ 
        P(3,1,0,5;q) = q^{21} \\
        P(3,1,0,6;q) = q^{29}+2q^{27}+2q^{25}+q^{23} \\
        P(3,1,0,7;q) = q^{35}+2q^{33}+3q^{31}+2q^{29} \\ 
            \hspace{3cm} +q^{27} \\
        P(3,1,0,8;q) = q^{39}+q^{37}+q^{35}+q^{33} \\
        P(3,1,0,\geq 9; q) = 0\end{array}$ 
    & $\begin{array}{l} P(3,2,0,\leq 5; q) = 0 \\ 
        P(3,2,0,6;q) = q^{32} \\
        P(3,2,0,7;q) = q^{42}+2q^{40}+3q^{38}+2q^{36}+2q^{34} \\
        P(3,2,0,8;q) = q^{50}+2q^{48}+5q^{46}+5q^{44}+6q^{42}+3q^{40} \\ 
            \hspace{3cm} +2q^{38} \\
        P(3,2,0,9;q) = q^{56}+q^{54}+4q^{52}+4q^{50}+5q^{48}+2q^{46} \\ 
            \hspace{3cm} +2q^{44} \\
        P(3,2,0,10;q) = q^{58}+q^{56}+q^{54}+q^{52} \\
        P(3,2,0,\geq 11; q) = 0\end{array}$ \\ \hline 
 4 & $\begin{array}{l} P(4,1,0,\leq 5; q) = 0 \\ 
        P(4,1,0,6;q) = q^{31} \\
        P(4,1,0,7;q) = q^{41}+2q^{39}+2q^{37}+2q^{35}+q^{33} \\
        P(4,1,0,8;q) = q^{49}+2q^{47}+4q^{45}+4q^{43}+4q^{41} \\ 
            \hspace{3cm} +2q^{39}+q^{37} \\
        P(4,1,0,9;q) = q^{55}+2q^{53}+3q^{51}+4q^{49}+3q^{47} \\ 
            \hspace{3cm} +2q^{45}+q^{43} \\
        P(4,1,0,10;q) = q^{59}+q^{57}+q^{55}+q^{53}+q^{51} \\
        P(4,1,0,\geq 11; q) = 0\end{array}$ 
    & $\begin{array}{l} P(4,2,0,\leq 6; q) = 0 \\ 
        P(4,2,0,7;q) = q^{44} \\
        P(4,2,0,8;q) = q^{56}+2q^{54}+3q^{52}+3q^{50}+2q^{48}+2q^{46} \\
        P(4,2,0,9;q) = q^{66}+2q^{64}+6q^{62}+6q^{60}+10q^{58}+7q^{56} \\ 
            \hspace{3cm} +7q^{54}+3q^{52}+2q^{50} \\
        P(4,2,0,10;q) = q^{74}+2q^{72}+5q^{70}+8q^{68}+10q^{66}+11q^{64} \\ 
            \hspace{3cm} +9q^{62}+7q^{60}+3q^{58}+2q^{56} \\
        P(4,2,0,11;q) = q^{80}+q^{78}+4q^{76}+4q^{74}+7q^{72}+5q^{70} \\ 
            \hspace{3cm} +5q^{68}+2q^{66}+2q^{64} \\
        P(4,2,0,12;q) = q^{82}+q^{80}+q^{78}+q^{76}+q^{74} \\
        P(4,2,0,\geq 13; q) = 0\end{array}$ \\ \hline 
\end{tabular}
}

\rotatebox{90}{\small
\begin{tabular}{|c|c|c|}
 \hline
 $m_1 \backslash m_2$ & 3 & 4 \\ \hline 
 1 & $\begin{array}{l} P(1,3,0,\leq 4; q) = 0 \\ 
        P(1,3,0,5;q) = q^{23} \\
        P(1,3,0,6;q) = q^{31}+q^{29}+2q^{27}+2q^{25} \\
        P(1,3,0,7;q) = q^{35}+2q^{33}+2q^{31}+2q^{29} \\
        P(1,3,0,8;q) = q^{37}+q^{35} \\
        P(1,3,0, \geq 9; q) = 0\end{array}$ 
    & $\begin{array}{l} P(1,4,0,\leq 5; q) = 0 \\ 
        P(1,4,0,6;q) = q^{34} \\
        P(1,4,0,7;q) = q^{44}+q^{42}+2 q^{40}+2 q^{38}+2 q^{36} \\
        P(1,4,0,8;q) = q^{50}+2 q^{48}+3 q^{46}+4 q^{44}+3 q^{42}+2 q^{40} \\
        P(1,4,0,9;q) = q^{54}+2 q^{52}+3 q^{50}+2 q^{48}+2 q^{46} \\
        P(1,4,0,10;q) = q^{56}+q^{54} \\
        P(1,4,0, \geq 11; q) = 0\end{array}$ \\ \hline 
 2 & $\begin{array}{l} P(2,3,0,\leq 5; q) = 0 \\ 
        P(2,3,0,6;q) = q^{33} \\
        P(2,3,0,7;q) = q^{43}+2 q^{41}+2 q^{39}+3 q^{37}+2 q^{35} \\
        P(2,3,0,8;q) = q^{51}+q^{49}+4 q^{47}+5 q^{45}+6 q^{43} \\ 
            \hspace{3cm} +3 q^{41}+3 q^{39} \\
        P(2,3,0,9;q) = q^{55}+2 q^{53}+4 q^{51}+4 q^{49}+3 q^{47} \\ 
            \hspace{3cm} +2 q^{45} \\
        P(2,3,0,10;q) = q^{57}+q^{55}+q^{53} \\
        P(2,3,0, \geq 11; q) = 0\end{array}$ 
    & $\begin{array}{l} P(2,4,0,\leq 6; q) = 0 \\ 
        P(2,4,0,7;q) = q^{46} \\
        P(2,4,0,8;q) = q^{58}+2 q^{56}+2 q^{54}+3 q^{52}+3 q^{50}+2 q^{48} \\
        P(2,4,0,9;q) = q^{68}+q^{66}+4 q^{64}+5 q^{62}+9 q^{60}+7 q^{58} \\ 
            \hspace{3cm} +8 q^{56}+4 q^{54}+3 q^{52} \\
        P(2,4,0,10;q) = q^{74}+2 q^{72}+5 q^{70}+7 q^{68}+10 q^{66}+9 q^{64} \\ 
            \hspace{3cm} +8 q^{62}+4 q^{60}+3 q^{58} \\
        P(2,4,0,11;q) = q^{78}+2 q^{76}+5 q^{74}+4 q^{72}+5 q^{70}+3 q^{68} \\ 
            \hspace{3cm} +2 q^{66} \\
        P(2,4,0,12;q) = q^{80}+q^{78}+q^{76} \\
        P(2,4,0, \geq 13; q) = 0\end{array}$ \\ \hline 
 3 & $\begin{array}{l} P(3,3,0,\leq 6; q) = 0 \\ 
        P(3,3,0,7;q) = q^{45} \\
        P(3,3,0,8;q) = q^{57}+2 q^{55}+3 q^{53}+3 q^{51}+3 q^{49}+2 q^{47} \\
        P(3,3,0,9;q) = q^{67}+2 q^{65}+5 q^{63}+7 q^{61}+10 q^{59}+9 q^{57} \\ 
            \hspace{3cm} +8 q^{55}+4 q^{53}+3 q^{51} \\
        P(3,3,0,10;q) = q^{75}+q^{73}+4 q^{71}+8 q^{69}+10 q^{67}+11 q^{65} \\ 
            \hspace{3cm} +12 q^{63}+8 q^{61}+4 q^{59}+3 q^{57} \\
        P(3,3,0,11;q) = q^{79}+2 q^{77}+4 q^{75}+6 q^{73}+6 q^{71}+5 q^{69} \\ 
            \hspace{3cm} +3 q^{67}+2 q^{65} \\
        P(3,3,0,12;q) = q^{81}+q^{79}+q^{77}+q^{75} \\
        P(3,3,0, 13 \geq ; q) = 0 \end{array}$ 
    & $\begin{array}{l} P(3,4,0,\leq 7 ; q) = 0 \\ 
        P(3,4,0,8;q) = q^{60} \\
        P(3,4,0,9;q) = q^{74}+2 q^{72}+3 q^{70}+3 q^{68}+4 q^{66}+3 q^{64}+2 q^{62} \\
        P(3,4,0,10;q) = q^{86}+2 q^{84}+5 q^{82}+7 q^{80}+12 q^{78}+13 q^{76} \\ 
            \hspace{3cm} +15 q^{74}+11 q^{72}+10 q^{70}+5 q^{68}+3 q^{66} \\
        P(3,4,0,11;q) = q^{96}+q^{94}+4 q^{92}+8 q^{90}+14 q^{88}+17 q^{86} \\ 
            \hspace{3cm} +25 q^{84}+22 q^{82}+23 q^{80}+17 q^{78}+12 q^{76} \\ 
            \hspace{3cm} +5 q^{74}+4 q^{72} \\
        P(3,4,0,12;q) = q^{102}+2 q^{100}+5 q^{98}+10 q^{96}+14 q^{94}+18 q^{92} \\ 
            \hspace{3cm} +19 q^{90}+19 q^{88}+14 q^{86}+10 q^{84}+5 q^{82}+3 q^{80} \\
        P(3,4,0,13;q) = q^{106}+2 q^{104}+5 q^{102}+6 q^{100}+8 q^{98}+6 q^{96} \\ 
            \hspace{3cm} +6 q^{94}+3 q^{92}+2 q^{90} \\
        P(3,4,0,14;q) = q^{108}+q^{106}+q^{104}+q^{102} \\
        P(3,4,0,  \geq 15; q) = 0 \end{array}$ \\ \hline 
\end{tabular}
}

\newpage 

For $m_3 = 1$: 

\rotatebox{90}{
\begin{tabular}{|c|c|c|c|}
 \hline
 $m_1 \backslash m_2$ & 0 & 1 & 2 \\ \hline 
 0 &  $\begin{array}{l} P(0, 0, 1, \leq 4; q) = 0 \\ 
    P(0, 0, 1, 5; q) = q^{13} \\ 
    P(0, 0, 1, \geq 6; q) = 0 \end{array}$
    & $\begin{array}{l} P(0, 1, 1, \leq 5; q) = 0 \\ 
    P(0, 1, 1, 6; q) = q^{24}+q^{21} \\ 
    P(0, 1, 1, 7; q) = q^{26} \\ 
    P(0, 1, 1, \geq 8; q) = 0 \end{array}$ 
    & $\begin{array}{l} P(0, 2, 1, \leq 6; q) = 0 \\ 
    P(0, 2, 1, 7; q) = q^{37}+q^{34}+q^{31} \\ 
    P(0, 2, 1, 8; q) = q^{41}+q^{39}+q^{38}+q^{36} \\ 
    P(0, 2, 1, 9; q) = q^{43} \\ 
    P(0, 2, 1, \geq 10; q) = 0 \end{array}$ \\ \hline 
 1 & $\begin{array}{l} P(1, 0, 1, \leq 5; q) = 0 \\ 
    P(1, 0, 1, 6; q) = q^{23}+q^{20} \\ 
    P(1, 0, 1, 7; q) = q^{27}+q^{25} \\ 
    P(1, 0, 1, \geq 8; q) = 0 \end{array}$ 
    & $\begin{array}{l} P(1, 1, 1, \leq 6; q) = 0 \\ 
    P(1, 1, 1, 7; q) = q^{36}+2 q^{33}+q^{30} \\ 
    P(1, 1, 1, 8; q) = q^{42}+2 q^{40} \\ 
        \hspace{10mm} +q^{39}+q^{38}+q^{37}+2 q^{35} \\ 
    P(1, 1, 1, 9; q) = q^{44}+q^{42} \\ 
    P(1, 1, 1, \geq 10 ; q) = 0 \end{array}$ 
    & $\begin{array}{l} P(1, 2, 1, \leq 7; q) = 0 \\ 
    P(1, 2, 1, 8; q) = q^{51}+2 q^{48}+2 q^{45}+q^{42} \\ 
    P(1, 2, 1, 9; q) = q^{59}+2 q^{57}+q^{56}+q^{55} \\ 
        \hspace{2mm} +2 q^{54}+3 q^{53} +3 q^{52} +q^{51}+2 q^{50} \\ 
        \hspace{2mm} +2 q^{49}+2 q^{47} \\ 
    P(1, 2, 1, 10; q) = q^{63}+3 q^{61} \\ 
        \hspace{2mm} +q^{60}+2 q^{59}+2 q^{58}+q^{57} +2 q^{56}+2 q^{54} \\ 
    P(1, 2, 1, 11; q) = q^{65}+q^{63} \\ 
    P(1, 2, 1, \geq 12; q) = 0 \end{array}$ \\ \hline 
 2 & $\begin{array}{l} P(2, 0, 1, \leq 6; q) = 0 \\ 
    P(2, 0, 1, 7; q) = q^{35} \\ 
        \hspace{20mm} +q^{32}+q^{29} \\ 
    P(2, 0, 1, 8; q) = q^{41}+q^{39} \\ 
        \hspace{2mm} +q^{38}+q^{37} +q^{36}+q^{34} \\ 
    P(2, 0, 1, 9; q) = q^{45} \\ 
        \hspace{15mm} +q^{43}+q^{41} \\ 
    P(2, 0, 1, \geq 10; q) = 0 \end{array}$ 
    & $\begin{array}{l} P(2, 1, 1, \leq 7; q) = 0 \\ 
    P(2, 1, 1, 8; q) = q^{50}+2 q^{47} \\ 
        \hspace{20mm} +2 q^{44}+q^{41} \\ 
    P(2, 1, 1, 9; q)= q^{58}+2 q^{56} \\ 
        +2 q^{55}+2 q^{54}+2 q^{53}+2 q^{52}+3 q^{51} \\ 
        \hspace{2mm} +2 q^{50} +2 q^{49}+q^{48}+2 q^{46} \\ 
    P(2, 1, 1, 10; q) = q^{64}+2 q^{62} \\ 
        \hspace{2mm} +q^{61}+3 q^{60} +q^{59}+2 q^{58} \\ 
        \hspace{2mm} +3 q^{57}+q^{56} +2 q^{55}+2 q^{53} \\ 
    P(2, 1, 1, 11; q) = q^{66}+q^{64}+q^{62} \\ 
    P(2, 1, 1, \geq 12; q) = 0 \end{array}$ 
    & $\begin{array}{l} P(2, 2, 1, \leq 8; q) = 0 \\ 
    P(2, 2, 1, 9; q) = q^{67}+2 q^{64} \\ 
        \hspace{2mm} +3 q^{61}+2 q^{58}+q^{55} \\ 
    P(2, 2, 1, 10; q) = q^{77}+2 q^{75}+2 q^{74} \\ 
        \hspace{2mm} +2 q^{73}+3 q^{72}+4 q^{71}+4 q^{70}+5 q^{69} \\ 
        \hspace{2mm} +4 q^{68}+3 q^{67}+5 q^{66}+4 q^{65}+2 q^{64} \\ 
        \hspace{2mm} +3 q^{63}+2 q^{62}+2 q^{60} \\ 
    P(2, 2, 1, 11; q) = q^{85}+2 q^{83}+q^{82} \\ 
        \hspace{2mm} +4 q^{81}+2 q^{80}+6 q^{79}+5 q^{78}+6 q^{77} \\ 
        \hspace{2mm} +6 q^{76}+6 q^{75}+7 q^{74}+6 q^{73}+4 q^{72} \\ 
        \hspace{2mm} +5 q^{71}+2 q^{70}+3 q^{69}+3 q^{67} \\ 
    P(2, 2, 1, 12; q) = q^{89}+3 q^{87}+q^{86} \\ 
        \hspace{2mm} +4 q^{85}+2 q^{84}+4 q^{83}+3 q^{82}+2 q^{81} \\ 
        \hspace{2mm} +4 q^{80}+q^{79}+3 q^{78}+2 q^{76} \\ 
    P(2, 2, 1, 13; q) = q^{91}+q^{89}+q^{87} \\ 
    P(2, 2, 1, \geq 14; q) = 0 \end{array}$ \\ \hline 
\end{tabular}
}

\newpage

For $m_3 = 2$: 

\begin{align}
 \nonumber P(0, 0, 2, \leq 8; q) & = 0  
 & P(0, 1, 2, \leq 9; q) & = 0 \\ 
 \nonumber P(0, 0, 2, 9; q) & = q^{46} 
 & P(0, 1, 2, 10; q) & = q^{65}+q^{62}+q^{59} \\ 
 \nonumber P(0, 0, 2, \geq 10; q) & = 0 
 & P(0, 1, 2, 11; q) & = q^{69}+q^{67} \\
 \nonumber & 
 & P(0, 1, 2, \geq 12; q) & = 0 
\end{align}
\begin{align}
 \nonumber P(0, 2, 2, \leq 10; q) & = 0 \\ 
 \nonumber P(0, 2, 2, 11; q) & = q^{86}+q^{83}+2 q^{80}+q^{77}+q^{74} \\ 
 \nonumber P(0, 2, 2, 12; q) & = q^{92}+q^{90}+q^{89}+q^{88}+q^{87}+q^{86}+q^{85}+q^{84}+q^{82} \\ 
 \nonumber P(0, 2, 2, 13; q) & = q^{96}+q^{94}+q^{92} \\ 
 \nonumber P(0, 2, 2, \geq 14; q) & = 0 
\end{align}
\begin{align}
 \nonumber & & P(1, 1, 2, \leq 10; q) & = 0 \\ 
 \nonumber P(1, 0, 2, \leq 9; q) & = 0 
 & P(1, 1, 2, 11; q) & = q^{85}+2 q^{82}+3 q^{79}+2 q^{76}+q^{73} \\ 
 \nonumber P(1, 0, 2, 10; q) & = q^{64}+q^{61}+q^{58} 
 & P(1, 1, 2, 12; q) & = q^{93}+2 q^{91}+q^{90}+2 q^{89} \\ 
 \nonumber P(1, 0, 2, 11; q) & = q^{70}+q^{68}+q^{66} 
 & & +2 q^{88}+2 q^{87}+q^{86}+q^{85}+2 \\ 
 \nonumber P(1, 0, 2, \geq 12; q) & = 0 
 & & q^{84}+2 q^{83}+2 q^{81} \\ 
 \nonumber & & P(1, 1, 2, 13; q) & = q^{97}+2 q^{95}+2 q^{93}+q^{91} \\ 
 \nonumber & & P(1, 1, 2, \geq 14 ; q) & = 0 
\end{align}
\begin{align}
 \nonumber P(1, 2, 2, \leq 11; q) & = 0 \\ 
 \nonumber P(1, 2, 2, 12; q) & = q^{108}+2 q^{105}+4 q^{102}+4 q^{99}+4 q^{96}+2 q^{93}+q^{90} \\ 
 \nonumber P(1, 2, 2, 13; q) & = q^{118}+2 q^{116}+q^{115}+2 q^{114}+3 q^{113}
    +3 q^{112}+2 q^{111}+6 q^{110} +4 q^{109}+4 q^{108} \\ 
 \nonumber & +4 q^{107}+6 q^{106}+3 q^{105}+5 q^{104}+2 q^{103}
    +2 q^{102}+3 q^{101}+2 q^{100}+2 q^{98} \\ 
 \nonumber P(1, 2, 2, 14; q) & = q^{124}+3 q^{122}+q^{121}+4 q^{120}+3 q^{119}
    +5 q^{118}+3 q^{117}+4 q^{116}+4 q^{115} \\ 
 \nonumber & +4 q^{114}+2 q^{113}+4 q^{112}+2 q^{111}+3 q^{110}+2 q^{108} \\ 
 \nonumber P(1, 2, 2, 15; q) & = q^{128}+2 q^{126}+3 q^{124}+2 q^{122}+q^{120} \\ 
 \nonumber P(1, 2, 2, \geq 16; q) & = 0 
\end{align}
\begin{align}
 \nonumber P(2, 0, 2, \leq 10; q) & = 0 \\ 
 \nonumber P(2, 0, 2, 11; q) & = q^{84}+q^{81}+2 q^{78}+q^{75}+q^{72} \\ 
 \nonumber P(2, 0, 2, 12; q) & = q^{92}+q^{90}+q^{89}+q^{88}+q^{87}
    +2 q^{86}+q^{85}+q^{84}+q^{83}+q^{82}+q^{80} \\ 
 \nonumber P(2, 0, 2, 13; q) & = q^{98}+q^{96}+2 q^{94}+q^{92}+q^{90} \\ 
 \nonumber P(2, 0, 2, \geq 14; q) & = 0 
\end{align}
\begin{align}
 \nonumber P(2, 1, 2, \leq 11; q) & = 0 \\ 
 \nonumber P(2, 1, 2, 12; q) & = q^{107}+2 q^{104}+4 q^{101}+4 q^{98}+4 q^{95}+2 q^{92}+q^{89} \\ 
 \nonumber P(2, 1, 2, 13; q) & = q^{117}+2 q^{115}+2 q^{114}+2 q^{113}+3 q^{112}+5 q^{111}
    +2 q^{110}+5 q^{109}+5 q^{108} \\ 
 \nonumber & +5 q^{107}+4 q^{106}+5 q^{105}+3 q^{104}
    +6 q^{103}+2 q^{102}+q^{101}+3 q^{100}+2 q^{99}+2 q^{97} \\ 
 \nonumber P(2, 1, 2, 14; q) & = q^{125}+2 q^{123}+q^{122}+4 q^{121}+2 q^{120}
    +5 q^{119}+3 q^{118}+5 q^{117}+4 q^{116}+5 q^{115} \\ 
 \nonumber & +4 q^{114}+5 q^{113}+2 q^{112}+4 q^{111}+2 q^{110}+3 q^{109}+2 q^{107} \\ 
 \nonumber P(2, 1, 2, 15; q) & = q^{129}+2 q^{127}+3 q^{125}+3 q^{123}+2 q^{121}+q^{119} \\ 
 \nonumber P(2, 1, 2, \geq 16; q) & =  0
\end{align}
\begin{align}
 \nonumber P(2, 2, 2, \leq 12; q) & = 0 \\ 
 \nonumber P(2, 2, 2, 13; q) & = q^{132}+2 q^{129}+5 q^{126}+6 q^{123}+8 q^{120}
    +6 q^{117}+5 q^{114}+2 q^{111}+q^{108} \\ 
 \nonumber P(2, 2, 2, 14; q) & = q^{144}+2 q^{142}+2 q^{141}+2 q^{140}+4 q^{139}
    +6 q^{138}+3 q^{137}+9 q^{136}+8 q^{135} \\ 
 \nonumber & +9 q^{134}+9 q^{133}+11 q^{132}+11 q^{131}+13 q^{130}+8 q^{129}+13 q^{128}+11 q^{127} \\
 \nonumber & +8 q^{126}+9 q^{125}+9 q^{124}+4 q^{123}+9 q^{122}+3 q^{121}+2 q^{120}+4 q^{119}+2 q^{118}+2 q^{116} \\ 
 \nonumber P(2, 2, 2, 15; q) & = q^{154}+2 q^{152}+q^{151}+5 q^{150}+3 q^{149}
    +7 q^{148}+6 q^{147}+12 q^{146}+8 q^{145}+15 q^{144} \\ 
 \nonumber & +12 q^{143}+18 q^{142}+13 q^{141}+20 q^{140}+13 q^{139}+21 q^{138}
    +14 q^{137}+17 q^{136} \\ 
 \nonumber & +10 q^{135}+14 q^{134}+8 q^{133}+11 q^{132}+4 q^{131}+6 q^{130}
    +4 q^{129}+4 q^{128}+3 q^{126} \\ 
 \nonumber P(2, 2, 2, 16; q) & = q^{160}+3 q^{158}+q^{157}+6 q^{156}+3 q^{155}
    +9 q^{154}+5 q^{153}+11 q^{152}+7 q^{151} \\ 
 \nonumber & +11 q^{150}+8 q^{149}+11 q^{148}+7 q^{147}+10 q^{146}+6 q^{145}+9 q^{144}+3 q^{143} \\ 
 \nonumber & +7 q^{142}+2 q^{141}+4 q^{140}+2 q^{138} \\ 
 \nonumber P(2, 2, 2, 17; q) & = q^{164}+2 q^{162}+4 q^{160}+4 q^{158}+4 q^{156}+2 q^{154}+q^{152} \\ 
 \nonumber P(2, 2, 2, \geq 18; q) & =  0
\end{align}

\end{document}